\newtheorem{theorem}{Theorem}[section]
\newtheorem{lemma}[theorem]{Lemma}
\newtheorem{proposition}[theorem]{Proposition}
\newtheorem{corollary}[theorem]{Corollary}
\newtheorem{definition}[theorem]{Definition}
\newtheorem{example}[theorem]{Example}
\newcommand{\ov}[1]{\overline{#1}}
\newcommand{\qalex}[1]{\mathcal{A}(#1)}
\newcommand{\sigdef}[ 2]{\sigma^{(2)}(#1, #2) - \sigma (#1)}
\newcommand{\bl}[0]{\mathcal{B}\ell}
\newcommand{\csum}[0]{{K_1\# -K_2}}
\newcommand{\alex}[1]{\mathcal{A}^{\mathbb{Z}}(#1)}
\newcommand{\rib}[0]{\mathfrak{R}}
\newcommand{\rcsum}[0]{{\mathfrak{R}\#-\mathfrak{R}}}
\newcommand{\ea}[0]{\eta_1}
\newcommand{\eb}[0]{\eta_2}
\newcommand{\mf}[1]{\mathfrak{#1}}
\newcommand{\defined}{\equiv}
\newcommand{\bdy}{\partial}
\newcommand{\p}{\mathcal{P}}
\newcommand{\Z}{\mathbb{Z}}
\newcommand{\Q}{\mathbb{Q}}
\begin{document}

\title{The Effect of Infecting Curves on Knot Concordance}

\author{Bridget D. Franklin$^{\dag}$}
\address{Department of Mathematics\\Rice University\\P.O. Box 1892\\Houston, TX 77251}
\email{bridget.franklin@rice.edu}
\urladdr{http://math.rice.edu/~bf2}
\date{\today}

\thanks{ $^{\dag}$Partially supported by Nettie S. Autry Fellowship}

\subjclass[2000]{57M25}


\begin{abstract}
Various obstructions to knot concordance have been found using Casson-Gordon invariants, higher-order Alexander polynomials, as well as von-Neumann $\rho$-invariants.  Examples have been produced using (iterated) doubling operations $K\equiv \rib(\eta,J)$, and considering these as parametrized by invariants of the base knot $J$ and doubling operator $\rib$.  In this paper, we introduce a new mew method to obstruct concordance.  We show that infinitely many distinct concordance classes may be constructed by varying the infecting curve $\eta$ in $S^3-\rib$. Distinct concordance classes are found even while fixing the base knot, the doubling operator, and the order of $\eta$ in the Alexander module.
\end{abstract}

\maketitle

\section{Introduction}\label{intro}
A \emph{knot} is a (smooth) embedding of $S^1$ into $S^3$.  Two knots, $K_0 \subset S^3 \times \{0\}$ and $K_1 \subset S^3 \times \{1\}$ are said to be \emph{concordant} if there exists a (smooth) embedding of an annulus into $S^3 \times [0,1]$, $h: S^1 \times [0,1] \hookrightarrow S^3 \times [0,1]$, such that $h(S_1 \times \{i\}) = K_{i} \subset S^3 \times \{i\}$ for $i=0,1$.  The set of knots modulo concordance is known to form an abelian group under connected sum with identity element the trivial knot.  This group is the \emph{(smooth) concordance group of knots}, denoted $\mathcal{C}$.  If any knot is concordant to the trivial knot, this means it must bound a (smoothly) embedded disk in $S^3 \times [0,1]$, or, equivalently, in $B^4$, the four-dimensional ball bounded by $S^3$.  Knots of this form are \emph{slice knots}.  The complete structure of $\mathcal{C}$ is still not well understood.  In 1969, Levine defined an epimorphism from $\mathcal{C}$ onto a group of cobordism classes of Seifert matrices isomorphic to $ \mathbb{Z}^\infty \oplus \mathbb{Z}_2^{\infty}\oplus \mathbb{Z}_4^\infty$, called the \emph{algebraic concordance group} of knots \cite{Le10}.  Elements of the kernel of this map are \emph{algebraically slice knots}.

In order to better understand the structure of the (smooth) concordance group, Cochran, Orr, and Teichner defined a filtration of $\mathcal{C}$ by subgroups indexed by half integers
\[
	\dots \mf{F}_{n+1} \subset \mf{F}_{n.5} \subset \mf{F}_{n}\subset\dots\subset 
	\mf{F}_{0.5}\subset\mf{F}_{0}\subset\mathcal{C}.
\]
This is the \emph{$(n)$-solvable filtration}, or \emph{Cochran-Orr-Teichner (COT) filtration}, of the knot concordance group \cite{COT}.  Expanding on previous research by Levine \cite{Le10}, Milnor, and Casson-Gordon \cite{CG1, CG2}, Cochran-Harvey-Leidy show in \cite{CHL5, CHL3, CHL6} that for each $n\in \mathbb{Z}$
\[
	\mathbb{Z}^\infty \oplus \mathbb{Z}_2^\infty \subset \mf{F}_n/\mf{F}_{n.5}.
\]
Cochran-Harvey-Leidy create these families of linearly independent knots using iterated \textit{infections}, $K_{i+1} = \mf{R}_i(\eta_i,K_{i})$.  Here, each $\rib_i$ is a ribbon knot, and $\eta_i$ is some embedded oriented circle in $S^3- \rib_i$ which is unknotted in $S^3$ and has zero linking with $\rib$ (call such circles \textit{infecting curves}).  An example is shown n the left hand side of Figure \ref{fig:infection}.  We obtain $K_{i+1}$ by cutting the strands of $\rib_i$ which intersect the disk bounded by $\eta_i$ and ``tying them into the knot $K_i$" as in Figure \ref{fig:infection}.  This procedure will be explicitly defined in Section \ref{sec:background}.
\begin{figure}[h]
\includegraphics[scale=.5]{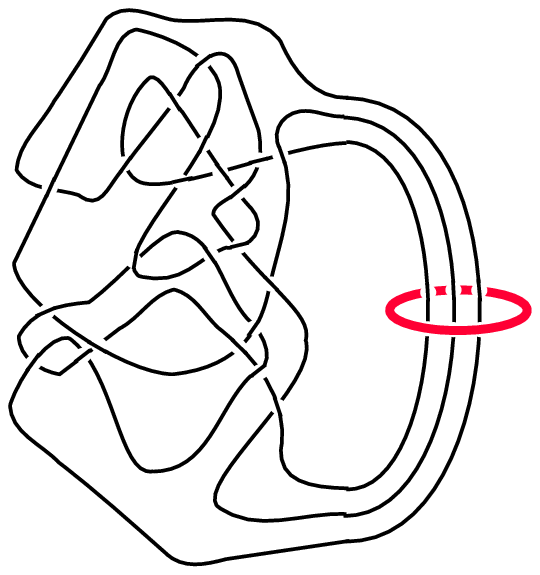}
\put(-100,50){$\rib_i$}\put(0,40){$\eta_i$}
\hspace{1in}
\includegraphics[scale=.5]{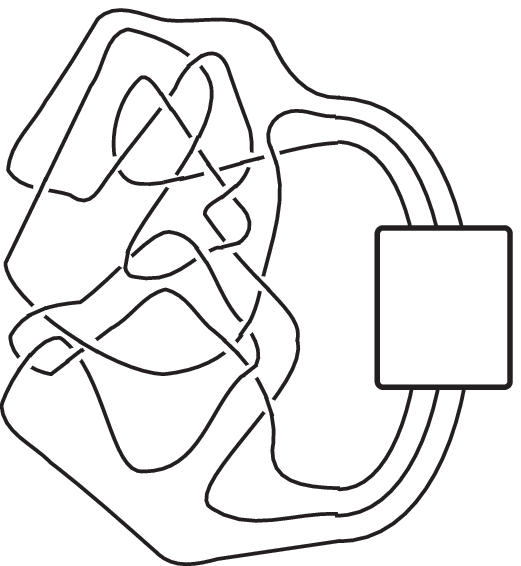}
\put(-100,50){$K_{i+1}$}\put(-17,35){$K_i$}
\caption{Infection: $K_{i+1}\equiv\rib_i(\eta_i,K_i)$}\label{fig:infection}
\end{figure}
Beginning with $K_0 \in \mf{F}_0$, this produces $K_i \in \mf{F}_i$ for each $i$ \cite[Proposition 2.7]{CHL5}.  At the $n$-th stage of the iteration, by varying the Levine-Tristram signatures of the inital knot $K_0$, Cochran-Harvey-Leidy produce an infinite rank subgroup $\mathbb{Z}^\infty \subset \mf{F}_n/\mf{F}_{n.5}$.  Next, they vary the ribbon knots $\mf{R}_i$, and more specifically their Alexander polynomials $p_i(t)$, to produce $n$ additional parameters for linear independence in each $\mf{F}_n/\mf{F}_{n.5}$.

Thus, many results regarding the structure of the $n$-solvable filtration have relied upon the classical signatures of the base knot $K_0$ and the Alexander polynomials of the ribbon knots $\rib_i$.  Here, we take a different approach by varying the infecting curves  while fixing the base knot $K_0$ and ribbon knots $\mf{R}_i$.  In particular, fixing $\rib$, we give conditions such that infection upon two distinct infecting curves, $\ea$ and $\eb \subset S^3-\rib$, by a knot $J$ yields distinct concordance classes.  If $\ea$ and $\eb$ have different orders in the Alexander module $\alex{\rib}$, the situation is easier -- a situation we treat in Section \ref{sec:1solv}.  Our main results, however, apply even when $\ea$ and  $\eb$ generate the same submodule of the Alexander module.   Furthermore, all the knots we produce are algebraically slice with vanishing Casson-Gordon invariants.  Obstructions are ultimately found using the \textit{Blanchfield linking form}, a sesquilinear form on the Alexander module of $\rib$.
\[
	\bl_\rib:\mathcal{A}^{\mathbb{Z}}(\rib)\times\mathcal{A}^{\mathbb{Z}}(\rib) \rightarrow {\mathbb{Q}(t)} \text{ mod }{\mathbb{Z}[t,t^{-1}]}.
\]
In particular, we consider the ``Blanchfield self-linking" of the infecting curve, $\bl(\eta,\eta)$.  Remark that we will often blur the distinction between infecting curves $\eta\subset S^3-\rib$ and the corresponding elements $[\eta]\in \mathcal{A}^\Z(\rib)$ (and ultimately any localized Alexander module), allowing $\eta$ to represent both.  Corollary \ref{cor:intense}, stated for simplicity, follows immediately from our main theorem (Theorem \ref{thm:distinct}).

\newtheorem*{corollary:intense}{Corollary \ref{cor:intense}}
\begin{corollary:intense}
Suppose $\rib$ is any knot with $\Delta_\rib\neq 1$.  Then there exists a (countably infinite) set of infecting curves $\{\eta_i\}$ and also a knot $J$ such that each $K_i\equiv \rib(\eta_i,J)$ represents a distinct concordance classes in $\mathcal{C}$.
\end{corollary:intense}

Figure \ref{fig:946plot} plots the value of the Blanchfield self-linking of elements in the rational Alexander module for the specific ribbon knot $\rib=9_{46}$.  Note that the Alexander module of $\rib$ is $\Delta_\rib(t) = 2t^2 -5t+2$ and its rational Alexander module is cyclic, generated by $\eta$. Curves on the graph are given by $xy = c$ for $c \in \{\pm 1,\pm 2,\pm 3, \pm 4, \pm 5\} \subset \mathbb{Z}[1/2]$.  Each shaded point on the curve $xy=c$ is represented by an element of $\mathcal{A}^{\Z}(\rib)$ whose Blanchfield self-linking, as an element of the \textit{rational Alexander module}, is 
\[
	c\cdot \bl(\eta,\eta) \in \frac{\mathbb{Q}(t)}{\mathbb{Q}[t,t^{-1}]}.
\]
Furthermore, each of these elements in $\alex{\rib}$ is realized by an infecting curve, $\eta_c$, in $S^3-\rib$.  By the proof of Corollary \ref{cor:intense}, there exists a knot $J$ such that the infection $\rib(\eta_c,J)$ yields a distinct concordance class for each for each distinct $c\in \Z[1/2]$.  The verification of this graph will be given in Section \ref{sec:application1}

\begin{figure}[h]
\includegraphics[scale=.5]{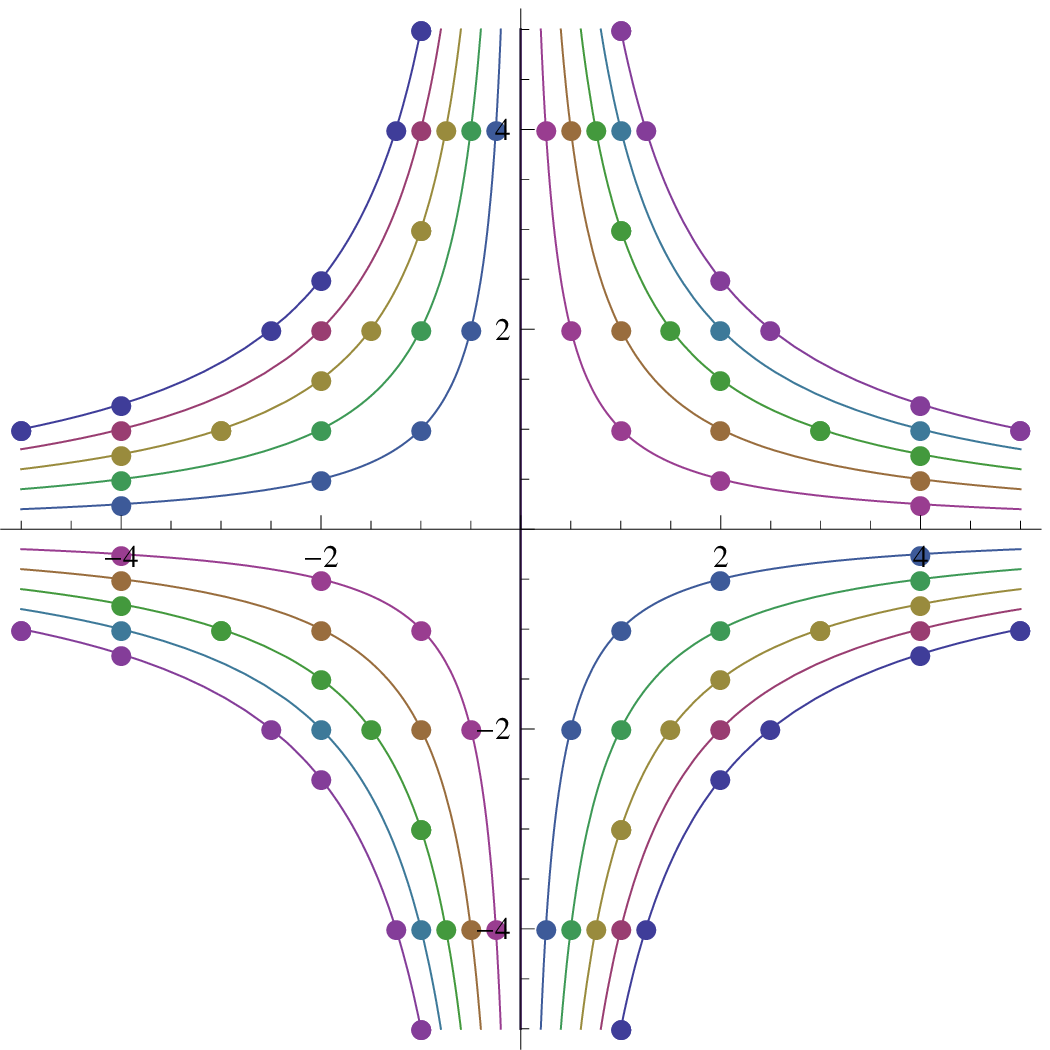}
\caption{}\label{fig:946plot}
\end{figure}

\newtheorem*{proposition:dense}{Proposition \ref{prop:dense}}
\begin{proposition:dense}
Let $\rib$ be a ribbon knot with Alexander polynomial $\Delta_\rib\neq 1$.  There exists a knot $J$ such that for any fixed infecting curve $\eta \subset S^3-\rib$,
\[
	\{ [\gamma] | \rib(\gamma, J) \text{ is concordant to } \rib(\eta,J) \text{ for some } \gamma \in [\gamma] \}
\]
is the subset of a quadric hypersurface in $\mathbb{Q}^{2g}$.
\end{proposition:dense}

Note that some restrictions on the infecting knot $J$ are necessary in the hypotheses of the reults presented.  In particular, if $J$ is slice, $\rib(\eta,J)$ will always be concordant to $\rib$.

Before beginning the technical details, we motivate our study with a few examples.  Certainly, it seems that infection upon distinct infecting curves should describe distinct satellite operations and therefore produce nonconcordant knots.  This is not always the case, however.

\begin{figure}[h]
	\includegraphics[scale=.2]{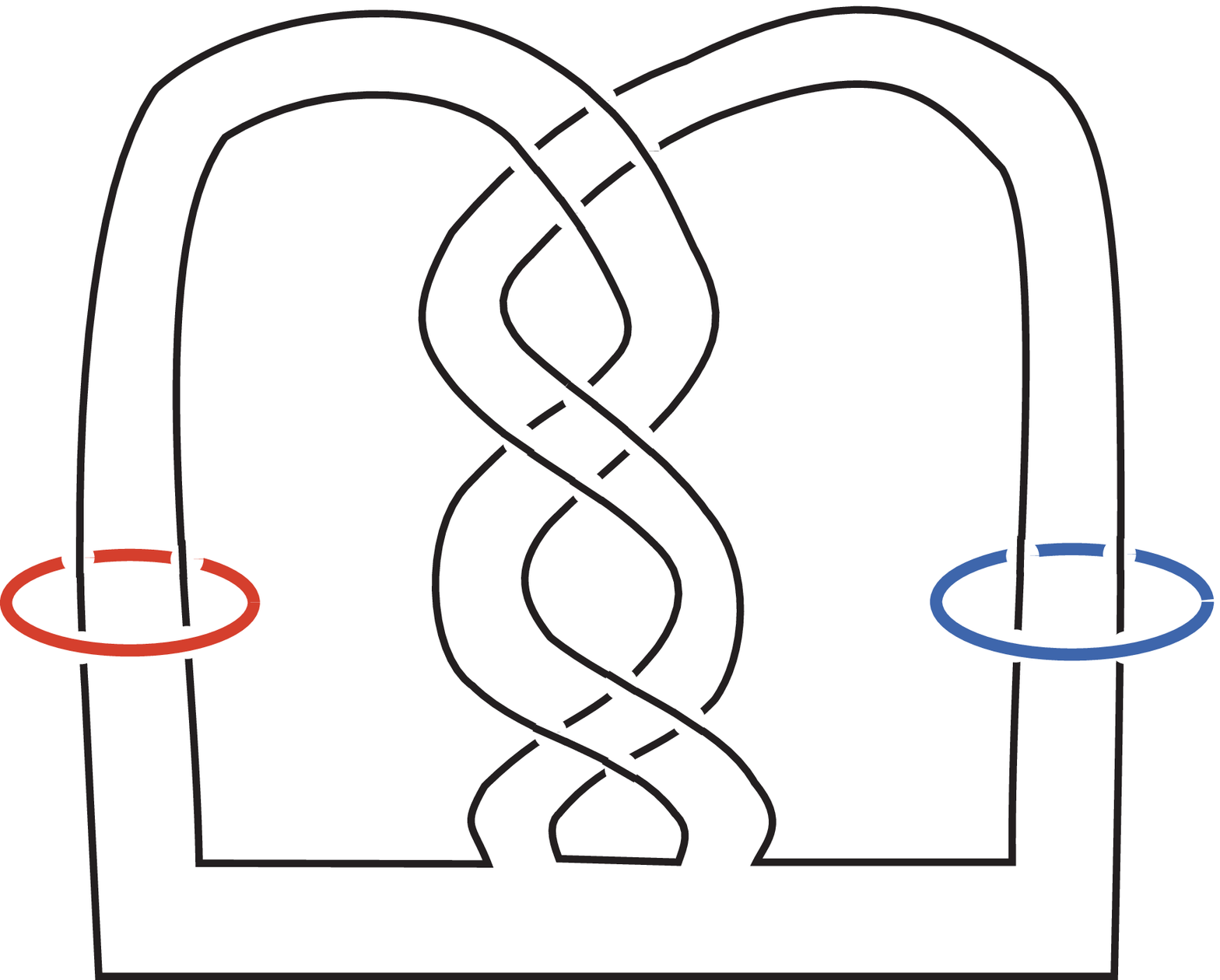}
	\put(-110,30){\textcolor{red}{$\alpha$}}
	\put(2,30){\color{blue}$\beta$}
	\caption{}\label{fig:946ab}
\end{figure}

\begin{example}\label{eg:ab}
Take $\rib_1=9_{46}$ and let $\alpha, \beta$ be the infecting curves shown in Figure \ref{fig:946ab}.  Take $J$ to be any knot, and set $K_1 \equiv \rib_1(\alpha,J)$,  $K_2\equiv \rib_1(\beta,J)$.  Notice that $\alpha$ and $\beta$, as elements in $\alex{\rib_1}$, have different orders and generate different submodules.  However, both $\alpha$ and $\beta$ encircle ribbon bands of $\rib$.  If we cut along the band encircled by $\alpha$ in $K_1$ we obtain a two-component trivial link, shown in Figure \ref{fig:946disks}, proving that $K_1$ is ribbon.  Similary, $K_2$ is also ribbon and $K_1$ and $K_2$ are concordant in $\mathcal{C}$.
\end{example}

\begin{figure}[h]
\parbox{1.5in}{
	\includegraphics[scale=.2]{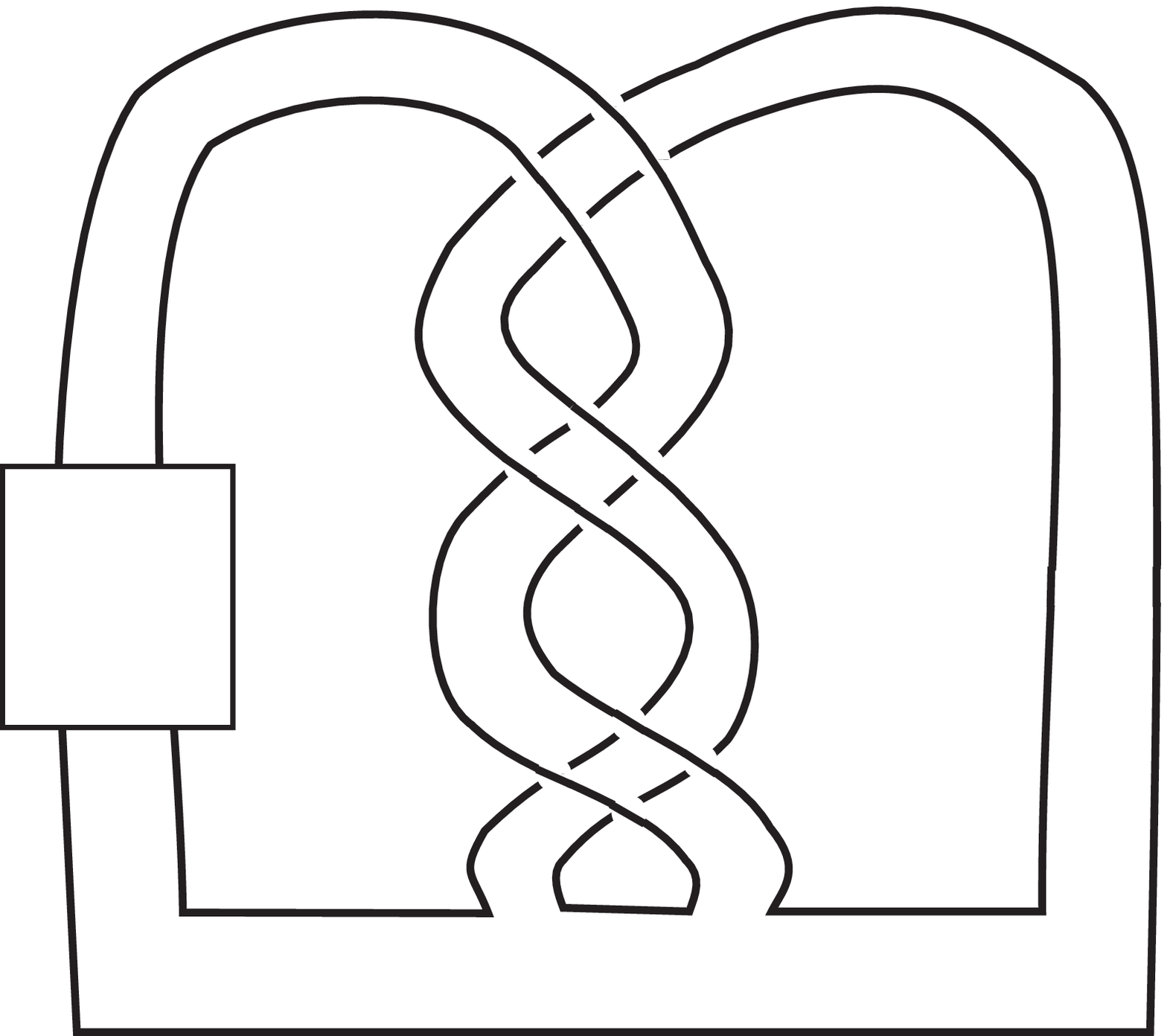}
	\put(-87,30){$J$}}
\parbox{1.5in}{
	\includegraphics[scale=.2]{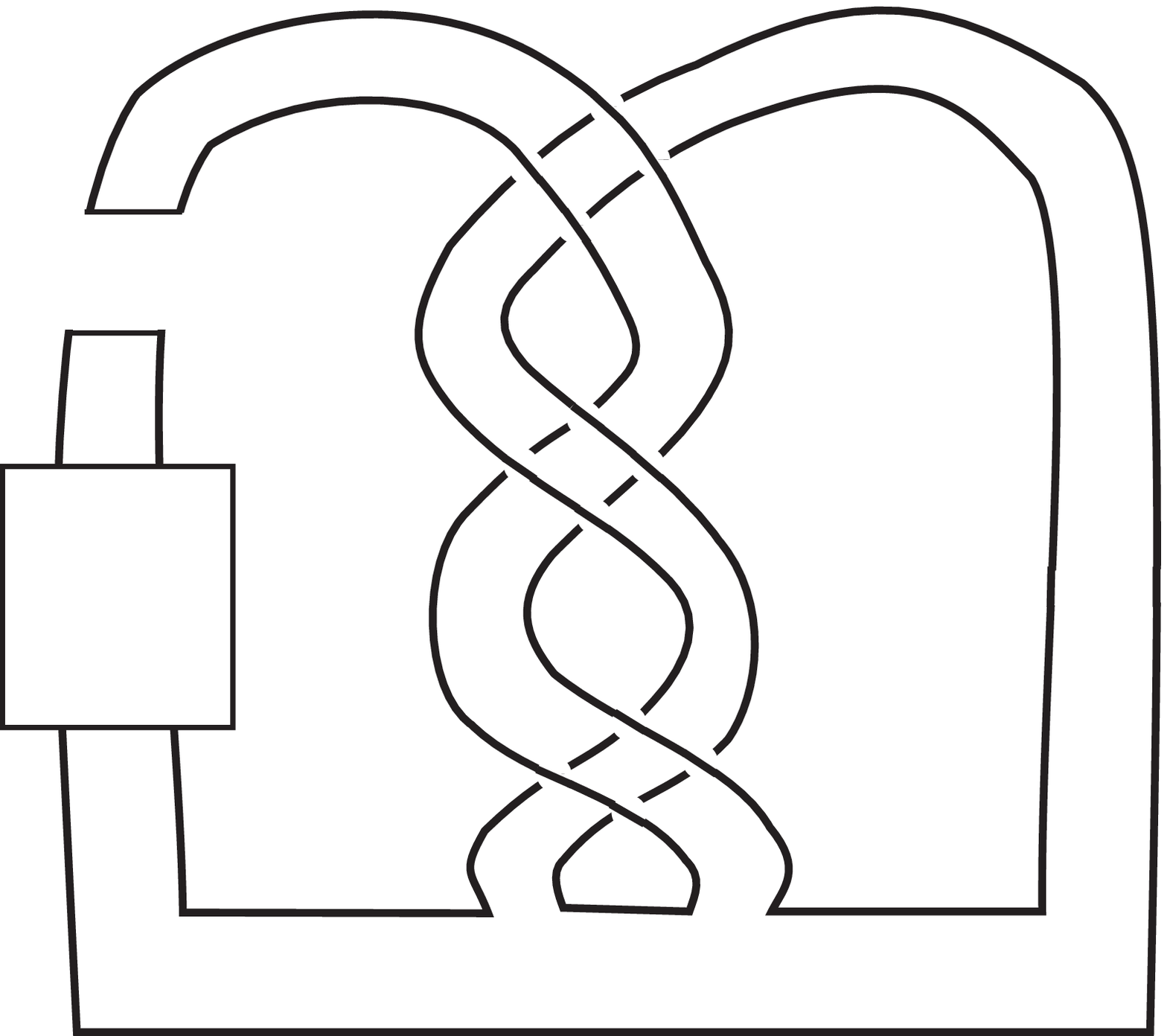}
	\put(-87,30){$J$}}
\parbox{1.5in}{
	\includegraphics[scale=.2]{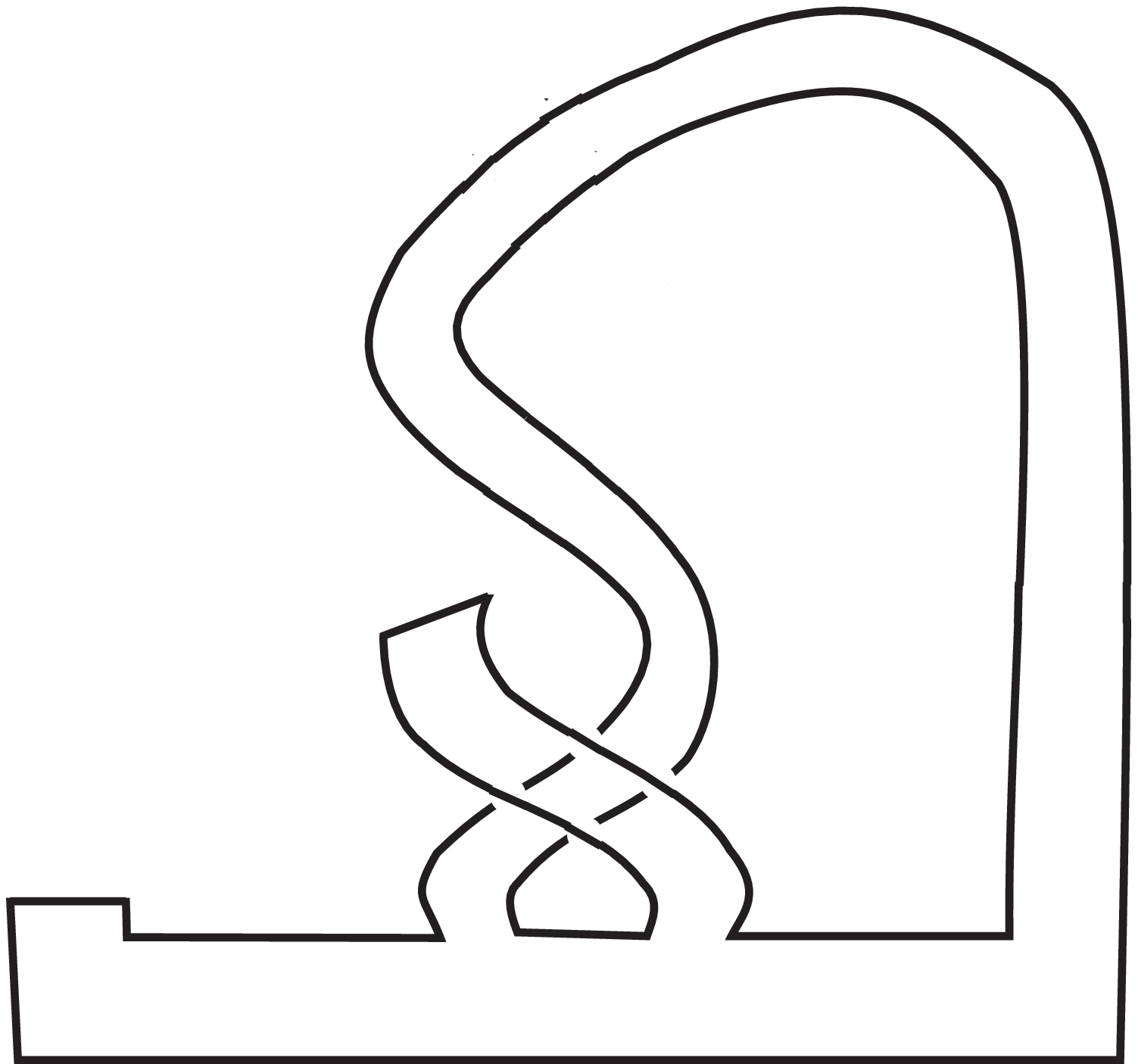}}
\caption{Cutting a ribbon band}\label{fig:946disks}
\end{figure}

In Example \ref{eg:ab}, $\alpha$ and $\beta$ generate different submodules and have different orders as elements of $\alex{\rib_1}$. However, both submodules are \textit{isotropic} with respect to the Blanchfield form, that is $\bl(\alpha,\alpha) = \bl(\beta,\beta)=0$.  This motivates our inquiry into how restrictions on the Blanchfield form could obstruct concordance between knots obtained by infecting $\rib$ using distinct infecting curves.  These restrictions prove lucrative even if the infecting curves generate the same submodule and have the same order in the Alexander module, $\mathcal{A}(\rib)$.  The following example illustrates that the question of which $\eta$ lead to distinct concordance classes is complicated even when $\bl(\eta,\eta)\neq 0$.  

\begin{figure}[h]
\subfloat[][]{\label{fig:sumrk1}
	\includegraphics[scale=1.3]{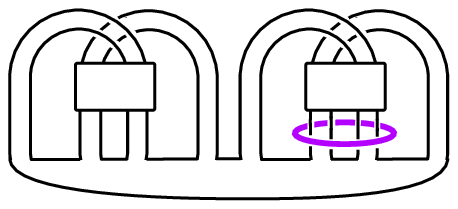}
	\put(-44,48){$2$}\put(-130,48){$1$}\put(-70,35){$\ea$}}
\subfloat[][]{\label{fig:sumrk2}
	\includegraphics[scale=1.3]{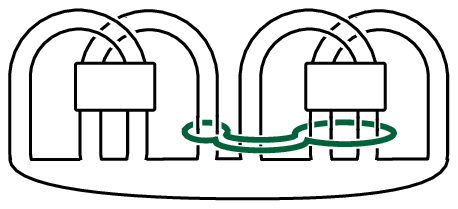}
	\put(-44,48){$2$}\put(-130,48){$1$}\put(-70,40){$\eb$}}
\caption{$\rib_2 = R_1\# R_2$}\label{fig:sumrk}
\end{figure}

\begin{example}\label{ex2}
Let $\rib_2 = R_1 \#R_2$ be the ribbon knot of Figure \ref{fig:sumrk} formed by taking the connected sum of ribbon knots $R_1, R_2$.  Here, the $1$ and $2$ inside the boxes indicate $1$ and $2$ negative full twists.  Let $\ea$ and $\eb$ be the infecting curves shown in Figures \ref{fig:sumrk1}, \ref{fig:sumrk2} respectively.  Again, take $J$ to be any knot, and set $K_1 \equiv \rib_2 (\ea, J)$, $K_2 \equiv \rib_2 (\eb,J)$. The Alexander module of $\rib_2$ is given by
\begin{equation*}
	\alex{\rib_2} = \alex{R_1}\oplus\alex{R_2} =\frac{\mathbb{Q}[t,t^{-1}]}{(1 - 2t)(2-1t)}\oplus \frac{\mathbb{Q}[t,t^{-1}]}{(2-3t)(3-2t)}
\end{equation*}
Notice that $\ea$ and $\eb$ generate different submodules of $\alex{\rib_2}$, neither of which is isotropic.  The orders of $[\ea], [\eb]$ are $(2-3t)(3-2t)$ and $(2-3t)(3-2t)(2-t)$ respectively.  However, the Blanchfield form yields
\begin{equation*}
	\bl(\ea,\ea) = \frac{5 (-1 + t)^2}{6 - 13 t + 6 t^2} = \bl(\eb,\eb).
\end{equation*}
In fact, $K_1$ and $K_2$ are concordant! This is because the ``extra band" encircled by $\eb$ is a ribbon band of $R_1$.  By cutting this band, we see that $\rib_2$ is concordant to $R_2$, shown in Figure \ref{fig:rk2}.  Thus, in a similar process to that depicted in Figure \ref{fig:946disks}, both $K_1$ and $K_2$ are concordant to $R_2(\eta,J)$.

\begin{figure}[h]
	\includegraphics[scale=.7]{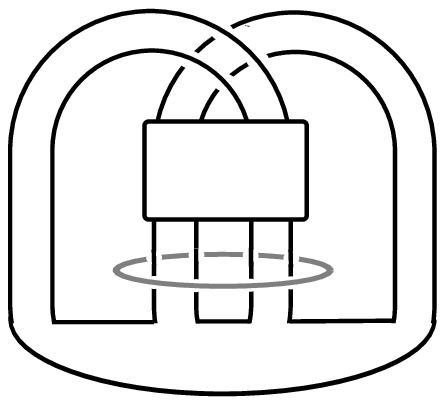}
	\put(-47,53){$2$}\put(-73,40){$\eta$}
	\caption{$R_2$}\label{fig:rk2}
\end{figure}
\end{example}

In Section \ref{sec:background}, we provide the necessary terminology and preliminaries before formally introducing our main theorem in Section \ref{sec:main}.  Also in Section \ref{sec:main}, we will also give a brief outline of its proof leaving some gaps.  In \ref{sec:bforms} we go through the technical details to fill in these gaps, giving obstructions created by the Blanchfield form.  Then, in Section \ref{sec:application1}, we give a complete study of distinct concordance classes, $K_i\equiv \rib(\eta_i,J)$, which may be produced by fixing the knot $J$ and setting $\rib \equiv 9_{46}$ as denoted in the Rolfsen knot tables \cite{R}.  This gives the necessary validation of Figure \ref{fig:946plot} as well as a detailed application of Corollary \ref{cor:intense}.  Finally, in Section \ref{sec:1solv}, we discuss an easier situation in which infecting curves have different orders in the Alexander module.  In particular, this allows us to slightly ease the hypotheses on the infecting knot $J$.

\section{Background}\label{sec:background}
\subsection{The $n$-solvable filtration of Cochran-Orr-Teichner}
In order to establish our results, culminating in Corollary ref{cor:intense} and Proposition \ref{prop:dense}, we rely on methods based on the \textit{$n$-solvable filtration} of the knot concordance group.  Our methods ultimately prove a stronger result than knots $K_1\equiv \rib(\ea,J)$ and $K_2\equiv \rib(\eb,J)$ being nonconcordant, rather we show that $\csum$ is $2$- but not $2.5$-solvable according to this filtration.  For any knot $K$, we denote by $M_K$ the closed $3$-manifold obtained by zero-surgery on $K$ in $S^3$.

\begin{definition}\cite[Definition 2.3]{CHL5}\label{def:nsolvable}
A knot $K$ is \textit{$n$-solvable} if there exists a spin $4$-manifold $V$ with boundary $\partial V = M_K$ such that
\begin{enumerate}
\item \label{prop:inclusion} Inclusion induces an isomorphism $H_1(M_K;\mathbb{Z}) \overset{\cong}\longrightarrow H_1(V;\mathbb{Z})$.
\item \label{prop:basis}There is a basis for $H_2(V;\mathbb{Z})$, $\{L_i,D_j|i,j=1,\dots,r\}$, consisting of compact, connected, embedded surfaces with trivial normal bundles which are pairwise disjoint, except that for each $i$, $L_i$ intersects $D_i$ transversely once with positive sign.
\item \label{prop:nthterm} Inclusion induces $\pi_1(L_i)\rightarrow \pi_1(V)^{(n)}$, $\pi_1(D_i)\rightarrow\pi_1(V)^{(n)}$ (where $G^{(n)}$ denotes the $n^{\text{th}}$ term of the derived series).
\end{enumerate}
A knot is $n.5$-solvable if, in addition,
\begin{enumerate}[resume]
\item \label{prop:n5thterm}$\pi_1(L_i)\rightarrow\pi_1(V)^{(n+1)}$.
\end{enumerate}
$V$ is called the $n$-solution (respectively the $n.5$-solution) for $K$. The subset of $\mathcal{C}$ consisting of all $n$-solvable knots is denoted $\mf{F}_n$.
\end{definition}
In addition, if we employ some arbitrary commutator series $*$ on $\pi_1(V)$ and property \ref{prop:nthterm} (and \ref{prop:n5thterm}) holds for $\pi_1(V)^{(n)}_*$, we say that $K$ is $(n,*)$-solvable (respectively $(n.5,*)$-solvable).  The set of $(n, *)$-solvable knots is denoted by $\mf{F}_n^*$.  These definitions induce a filtration on the concordance group of knots indexed by half integers, where $\mf{F}_n \subset \mf{F}_n^*$ for each $n \in \frac{1}{2}\mathbb{Z}$ \cite[Proposition 2.5]{CHL5}.
\[
	0 \subset \bigcap \mf{F}^*_n \subset \dots \subset \mf{F}^*_{n.5}\subset\mf{F}^*_n\subset\dots\subset \mf{F}^*_1\subset\mf{F}^*_{0.5}\subset\mf{F}^*_0\subset\mathcal{C}
\]
There has been much work using such filtrations (see \cite{COT,CHL5,CHL6, CHL3}). Knots which are $0$-solvable are precisely those which have Arf-invariant zero, $1$-solvable knots are algebraically slice, and any topologically slice knot is in $\mf{F}_n$ for every $n\in \mathbb{Z}$.  Our results are based upon methods used in \cite{CHL5, CHL6, CHL3}.

We suppose $\mathfrak{R}$ is a knot and $\eta_1$, and $\eta_2$ are infecting curves in $S^3 - \mathfrak{R}$.  Let $J$ and $L$ be two knots which may or may not be distinct.   Then via \emph{infection}, we form $K_1$ by removing a tubular neighborhood, $\nu(\ea)$, of $\eta_1$ in $M_{\mathfrak{R}}$ and replace it with a copy of $S^3 - J$ along an identification of their common toric boundary.  This process is done such that the longitude of $J$ is identified with the meridian of $\nu(\eta_1)$ and the meridian of $J$ is identified with the reverse of the longitude of $\nu(\eta_1)$.  We denote this operation by $K_1 \equiv \rib(\ea,J)$.  Form $K_2$ similarly by infecting $\rib$ along $\eb$ with $L$.  Note that infection is really just a specified satellite operation.

\begin{theorem}\cite[Proposition 2.7]{CHL5}\label{thm:solvup}
Suppose $J \in \mf{F}^*_n$, $\rib$ is ribbon, and $\eta \subset S^3 - \rib$ is an infecting curve.  If $[\eta] \in \pi_1(M_\rib)_*^{(k)}$, then $\rib(\eta,J)$ is $(n+k, *)$-solvable.
\end{theorem}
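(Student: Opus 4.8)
The plan is to build an explicit $(n+k,*)$-solution $V$ for $\rib(\eta,J)$ by gluing a solution for $J$ into the slice-disk complement of $\rib$. Since $\rib$ is ribbon, it bounds a ribbon disk $D \subset B^4$; let $W_\rib = B^4 \setminus \nu(D)$ be its exterior, a spin $4$-manifold with $\partial W_\rib = M_\rib$, $H_1(W_\rib) = \Z$ generated by the meridian $\mu_\rib$, and $H_2(W_\rib) = 0$. Since $J \in \mf{F}^*_n$, fix an $(n,*)$-solution $V_J$ with $\partial V_J = M_J$; after surgering $V_J$ along circles (a standard move that preserves all solution properties) I may assume $\pi_1(M_J) \to \pi_1(V_J)$ is surjective, so that $\pi_1(V_J)$ is normally generated by the meridian $\mu_J$.

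First I would describe the gluing. The infecting curve $\eta$ sits in $M_\rib = \partial W_\rib$ with tubular neighborhood $\nu(\eta) \cong S^1 \times D^2$; dually, the surgery-dual circle of $J$ has a neighborhood in $M_J = \partial V_J$ whose complement is the exterior $E_J$. Set
\[
	V = W_\rib \cup_{S^1 \times D^2} V_J,
\]
gluing these two solid-torus neighborhoods by the diffeomorphism dictated by infection, namely matching $\mu_\eta \leftrightarrow \lambda_J$ and $\lambda_\eta \leftrightarrow \mu_J^{-1}$. A direct check then identifies $\partial V = (M_\rib \setminus \nu(\eta)) \cup_{T^2} E_J = M_{\rib(\eta,J)}$, and the spin structures on $W_\rib$ and $V_J$ can be matched across the gluing torus so that $V$ is spin. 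Next I would verify the homological conditions by Mayer--Vietoris for $V = W_\rib \cup_{S^1} V_J$, where the gluing region deformation retracts to the core $S^1 = \eta$. Because $\eta$ has zero linking number with $\rib$ it dies in $H_1(W_\rib)$, while its image generates $H_1(V_J) = \Z$; this forces $H_1(V) \cong \Z$, generated by $\mu_\rib$, giving the isomorphism of condition (1). The same sequence together with $H_2(W_\rib) = 0$ yields $H_2(V) \cong H_2(V_J)$, so the surface basis $\{L_i, D_j\}$ of $V_J$ serves as the required basis for $V$ with its intersection pattern intact, giving condition (2).

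The heart of the argument, and the step I expect to be the main obstacle, is the derived-series condition (3): I must show that the surfaces, which only satisfy $\pi_1(L_i) \to \pi_1(V_J)^{(n)}_*$ inside $V_J$, land in the deeper subgroup $\pi_1(V)^{(n+k)}_*$ once included into $V$. The mechanism is the depth shift produced by $\eta$. Tracing the longitude through the gluing, $\mu_J$ maps to $\eta^{-1}$ in $\pi_1(V)$; since $[\eta] \in \pi_1(M_\rib)^{(k)}_*$ by hypothesis and the inclusion induces a homomorphism that preserves the commutator series, its image lies in $\pi_1(V)^{(k)}_*$. As $\pi_1(V_J)$ is normally generated by $\mu_J$ and $\pi_1(V)^{(k)}_*$ is normal in $\pi_1(V)$, the entire image $\phi(\pi_1(V_J))$ lies in $\pi_1(V)^{(k)}_*$. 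Applying the commutator-series nesting identity $(\pi_1(V)^{(k)}_*)^{(n)}_* \subseteq \pi_1(V)^{(n+k)}_*$ then gives
\[
	\pi_1(L_i) \longrightarrow \phi(\pi_1(V_J))^{(n)}_* \subseteq \left(\pi_1(V)^{(k)}_*\right)^{(n)}_* \subseteq \pi_1(V)^{(n+k)}_*,
\]
and likewise for the $D_i$; when $n$ is a half-integer the extra condition (4) propagates identically, yielding the required $(n+k,*)$-solution. The delicate points to confirm are that the reduction making $\pi_1(V_J)$ normally generated by $\mu_J$ is legitimate, and that the chosen commutator series $*$ genuinely satisfies the nesting identity on which the entire shift rests.
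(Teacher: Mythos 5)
A preliminary note: the paper itself contains no proof of this statement --- it is quoted from \cite{CHL5} --- so your attempt can only be measured against the proof in that reference. Your route is essentially that proof's route. The literature version caps off the infection cobordism (the manifold this paper calls $F_1$) with the ribbon-disk exterior $W_\rib$ and an $(n,*)$-solution $V_J$, whereas you glue $W_\rib$ to $V_J$ directly along the solid tori $\nu(\eta)\subset M_\rib$ and the surgery solid torus in $M_J$; both produce a spin $4$-manifold with boundary $M_{\rib(\eta,J)}$, and the difference is cosmetic. Your Mayer--Vietoris computations (using that a slice-disk exterior is a homology circle) are correct, and your depth-shift mechanism --- $\mu_J\mapsto\eta^{\mp1}\in\pi_1(V)^{(k)}_*$, normality of $\pi_1(V)^{(k)}_*$ pushing the whole image of a normally generated $\pi_1(V_J)$ into it, then nesting --- is exactly the mechanism of the cited proof.

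However, the step you describe as ``a standard move that preserves all solution properties'' is a genuine gap, and it is where all of the difficulty of the proposition is concentrated. Surgering $V_J$ along circles whose classes normally generate $\pi_1(V_J)/\langle\langle\mu_J\rangle\rangle$ does kill the excess fundamental group, but each such surgery adds a hyperbolic pair to $H_2$: a belt sphere $S_i$ together with a dual class $T_i$ obtained by capping off a surface bounded by the surgered circle. Definition \ref{def:nsolvable} requires \emph{every} basis element --- in particular each $T_i$ --- to be an embedded surface, disjoint from the others, with $\pi_1(T_i)\rightarrow\pi_1(V_J')^{(n)}_*$; for an arbitrary choice of surgery circles and bounding surfaces this fails, so the naively surgered manifold is no longer an $(n,*)$-solution. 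The repair needs an idea absent from your write-up: since $H_1(M_J)\rightarrow H_1(V_J)$ is onto, the quotient $\pi_1(V_J)/\langle\langle\mu_J\rangle\rangle$ is perfect, hence every element of it lifts to $\pi_1(V_J)^{(n+1)}$, and one surgers only circles representing such lifts; an element of $[\pi_1(V_J)^{(n)},\pi_1(V_J)^{(n)}]$ bounds a surface whose fundamental group maps into $\pi_1(V_J)^{(n)}$ (embeddedness and disjointness are then arranged by resolving double points and tubing excess intersections into the dual surfaces, moves which only add loops that die in $\pi_1(V_J)$ or stay in $\pi_1(V_J)^{(n)}$), so the new duals are admissible. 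Note also that surgery yields normal generation of $\pi_1(V_J)$ by $\mu_J$, not surjectivity of $\pi_1(M_J)\rightarrow\pi_1(V_J)$ as you assert, though normal generation is all your argument actually uses. Your second flagged point is real but milder: the identities $\phi\bigl(G^{(n)}_*\bigr)\subseteq\phi(G)^{(n)}_*$ and $\bigl(\pi^{(k)}_*\bigr)^{(n)}_*\subseteq\pi^{(n+k)}_*$ are theorems for the derived series but axioms for a general commutator series, so the statement as quoted is only valid for series satisfying such functoriality and nesting --- as the partial commutator series of \cite{CHL5} do by construction, and as holds in the only instances this paper invokes, namely the ordinary derived series.
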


Under certain conditions for the $\eta_i$, $J$, and $L$, we show that $K_1$ and $K_2$ are not concordant.  This is done by showing $\csum$ is not smoothly slice, as considered in the $(n,*)$-solvable filtration.  Both $J$ and $L$ will be $1$-solvable, and by Theorem \ref{thm:solvup}, both $K_i$ will lie in $\mf{F}_2$.  We show that $\csum$ is not slice by showing that it is not $(2.5, \mathcal{S})$-solvable where $\mathcal{S}$ is a commutator series defined in Definition \ref{def:Sseries}.

\subsection{Cheeger-Gromov constants, and the von Neumann $\rho$-invariant}
In the definition of an $n$-solvable knot, we rely heavily on properties of the $n$-solution $V$.  Therefore, we must look to invariants associated to this $4$-manifold in order to obstruct $n.5$-solvability.

Given a compact, orientable $4$-manifold $X$ with boundary $\partial X=M_K$, let $\Phi: \pi_1(X) \rightarrow \Lambda$ be a homomorphism where $\Lambda$ is a poly-torsion free abelian (PTFA) group \cite[Definition 2.5]{C},.  If $ \partial (X, \Phi) = (M_K, \phi)$, Cheeger and Gromov study the $\rho$-invariant, denoted $\rho(M_K, \phi)$, associated to this coefficient system and show that it is equal to the ``von Neumann signature defect" \cite{ChGr1}.
\[
	\rho(M_K, \phi) = \sigma_{\Lambda}^{(2)}(X,\Phi) - \sigma(X)
\]
In this equation, $\sigma_{\Lambda}^{(2)}(X, \Phi)$ is the $L^{(2)}$ signature of the equivariant intersection form defined on $H_2(X;\mathbb{Z}\Lambda)$ twisted by $\Phi$  and $\sigma(X)$ is the ordinary signature (See \cite[Section 5]{COT}).

\begin{proposition}\label{prop:rho}\cite[Proposition 4.1]{CHL6}
	\begin{enumerate}
		\item \label{prop:rhofactors} If $\phi$ factors through $\phi':\pi_1(M_K) \rightarrow \Lambda'$ where $\Lambda'$ is a subgroup of $\Lambda$, then $\rho(M_K, \phi') = \rho(M_K, \phi)$.
		\item \label{prop:rhotrivcomp} If $\Phi$ is trivial on the restriction to $M_K \subset \partial X$, then $\rho(M_K, \phi) = 0$.
		\item \label{prop:rhonaut} If $\phi:\pi_1(M_K) \rightarrow \mathbb{Z}$ is the abelianization homomorphism, then $\rho(M_K, \phi)$ is denoted by $\rho_0(K)$ and is equal to the integral of the Levine-Tristram signature function of $K$.
		\item \label{prop:rhoadd}The von Neumann signature defect satisfies Novikov additivity, i.e. if $X_1$ and $X_2$ intersect along a common boundary component and $\Phi_i$ is the restriction of $\Phi: X_1 \cup X_2 \rightarrow \Lambda$ to $X_i$, then $\sigma_{\Lambda}^{(2)}(X_1 \cup X_2, \Phi) = \sigma_{\Lambda}^{(2)}(X_1, \Phi_1) + \sigma_{\Lambda}^{(2)}(X_2, \Phi_2)$.
		\item \label{prop:chgr}There is a positive real number $C_K$ called the Cheeger-Gromov constant of $M_K$ such that, for any $\phi:\pi_1(M_K)\rightarrow \Lambda$, $|\rho(M, \phi)| < C_K$.
		\item \label{prop:rhosoln} Let $*$ be an arbitrary commutator series and $K \in \mf{F}_{n.5}^{*}$ via $X$ with $G=\pi_1(X)$.  If $\Phi: \pi_1(M) \rightarrow G/G_*^{(n+1)} = \Lambda$, then
			\[
				\sigma_{\Lambda}^{(2)}(X, \Phi) - \sigma(X) = 0 = \rho(M, \Phi)
			\]
	\end{enumerate}
\end{proposition}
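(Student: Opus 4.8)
The plan is to regard $\rho(M_K,\phi)$ as the \emph{von Neumann $L^{(2)}$-signature defect} $\sigma^{(2)}_\Lambda(X,\Phi)-\sigma(X)$ of an arbitrary compact $(X,\Phi)$ with $\partial(X,\Phi)=(M_K,\phi)$, and to extract each property from the formal behavior of $L^{(2)}$-signatures over the group von Neumann algebra $\mathcal{N}(\Lambda)$. Here $\sigma^{(2)}_\Lambda(X,\Phi)$ is the difference of the von Neumann dimensions of the positive and negative parts of the $\Phi$-twisted Hermitian intersection form on $H_2(X;\Z\Lambda)$. The first task is to justify that this defect is an invariant of $(M_K,\phi)$ alone: given two bounding pairs, I would glue them along $M_K$ into a closed twisted $4$-manifold, invoke Atiyah's $L^{(2)}$-index theorem (the $L^{(2)}$-signature of a closed manifold equals its ordinary signature), and then Novikov additivity---property \ref{prop:rhoadd}---to conclude that the two defects agree. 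Thus I would prove \ref{prop:rhoadd} first, deducing it from the Mayer--Vietoris splitting of $H_2(X_1\cup X_2;\Z\Lambda)$, in which the intersection form is the orthogonal sum of the two pieces up to a metabolic summand supported on the separating $3$-manifold, together with additivity and metabolic-vanishing of von Neumann signatures.

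The remaining algebraic properties follow by specializing the coefficient system. For \ref{prop:rhofactors}, the inclusion $\Lambda'\hookrightarrow\Lambda$ induces a trace-preserving embedding $\mathcal{N}(\Lambda')\hookrightarrow\mathcal{N}(\Lambda)$; inducing coefficients up along this embedding changes neither $\sigma(X)$ nor, because induction preserves von Neumann dimension, the $L^{(2)}$-signature, so the defect computed over $\Lambda'$ equals that over $\Lambda$. For \ref{prop:rhotrivcomp}, when $\phi$ is trivial I would choose the trivial extension $\Phi$ over a bounding $X$; then $H_2(X;\Z\Lambda)$ with its twisted form is just $H_2(X;\Z)\otimes_\Z \Z\Lambda$ carrying the ordinary intersection form, so $\sigma^{(2)}_\Lambda(X,\Phi)=\sigma(X)$ and the defect is $0$. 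For \ref{prop:rhonaut} I would take $\Lambda=\Z$, identify $\mathcal{N}(\Z)\cong L^\infty(S^1)$ by the Fourier transform, and diagonalize the form fiberwise over $\omega\in S^1$; the trace becomes integration around the circle, so that $\sigma^{(2)}_\Z(X,\Phi)-\sigma(X)=\int_{S^1}\sigma_\omega(K)\,d\omega$, which is precisely the integral of the Levine--Tristram signature function.

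The one genuinely deep ingredient is \ref{prop:chgr}, the existence of a Cheeger--Gromov constant $C_K$ bounding $|\rho(M_K,\phi)|$ \emph{uniformly over all} $\phi$ and all PTFA targets $\Lambda$. I would not attempt to reprove this but rather import it: either through Cheeger and Gromov's analytic estimate controlling the $L^{(2)}$ $\rho$-invariant by a fixed Riemannian metric of bounded geometry on $M_K$, or---preferably, for an explicit bound---through Cha's combinatorial argument extracting $C_K$ from the simplicial complexity of a fixed triangulation of $M_K$ via a universal bounding $4$-manifold and controlled surgery. The essential and most delicate point, which is what makes this the main obstacle were one to prove it from scratch, is uniformity in $\phi$ and $\Lambda$; it is exactly this uniformity that renders $C_K$ an invariant of $M_K$ by itself, and it is the feature on which the concordance applications depend.

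Finally, \ref{prop:rhosoln} is the vanishing theorem for half-solutions, and here I would use the geometry of the solution directly. The basis $\{L_i,D_j\}$ of property \ref{prop:basis} makes the \emph{ordinary} intersection form of $X$ hyperbolic, so $\sigma(X)=0$. For the twisted form, property \ref{prop:n5thterm} sends each $\pi_1(L_i)$ into $G^{(n+1)}_*=\ker(G\to\Lambda)$, so the classes $[L_i]$ lift to $\Z\Lambda$-coefficients and span a submodule of $H_2(X;\Z\Lambda)$ that is self-annihilating under the $\Phi$-twisted form and of half the total von Neumann dimension. An equivariant intersection form admitting such a metabolizer has equal positive and negative von Neumann dimensions, hence $\sigma^{(2)}_\Lambda(X,\Phi)=0$. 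Therefore the defect $\sigma^{(2)}_\Lambda(X,\Phi)-\sigma(X)$ vanishes, and since $(X,\Phi)$ bounds $(M_K,\Phi)$ this defect is by definition $\rho(M_K,\Phi)$, completing the argument.
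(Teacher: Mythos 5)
The paper contains no proof of this proposition at all: it is quoted as background, verbatim, from \cite[Proposition 4.1]{CHL6}, which in turn aggregates results of Cochran--Orr--Teichner \cite{COT}, Cheeger--Gromov \cite{ChGr1}, and others. So there is no argument of the paper to compare yours against; what you have produced is a reconstruction of the standard proofs from that literature, and in outline it is the correct one: Novikov additivity plus Atiyah's $L^{(2)}$-index theorem for well-definedness of the defect, induction of coefficients along $\Lambda'\hookrightarrow\Lambda$ preserving von Neumann dimension for \ref{prop:rhofactors}, the trivial coefficient system for \ref{prop:rhotrivcomp}, the identification $\mathcal{N}(\mathbb{Z})\cong L^\infty(S^1)$ for \ref{prop:rhonaut}, importing rather than reproving the Cheeger--Gromov bound \ref{prop:chgr}, and a metabolizer argument for \ref{prop:rhosoln}.

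Two steps in your sketch, however, assert exactly the technical content that makes these results nontrivial. First, if you take the signature defect as the \emph{definition} of $\rho(M_K,\phi)$, you need a compact $(X,\Phi)$ with $\partial(X,\Phi)=(M_K,\phi)$ over the same target $\Lambda$; since $\Omega_3^{SO}(B\Lambda)\cong H_3(\Lambda;\mathbb{Z})$ is generally nonzero for PTFA groups (already for $\Lambda\cong\mathbb{Z}^3$), such a bounding pair need not exist. The literature avoids this either by defining $\rho$ analytically \`{a} la Cheeger--Gromov and proving the defect formula as a theorem whenever a bounding pair exists, or by allowing an enlargement of the target group, which is harmless precisely because of property \ref{prop:rhofactors}. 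Second, in your argument for \ref{prop:rhosoln}, the claim that the lifted classes $[L_i]$ span a self-annihilating submodule ``of half the total von Neumann dimension'' is the hard step of \cite[Theorem 4.2]{COT}, not a formal consequence of conditions \ref{prop:basis} and \ref{prop:n5thterm}: a priori the $[L_i]$ could become dependent, or span a submodule of small $\mathcal{N}(\Lambda)$-dimension, after passing to twisted coefficients. Forcing the isotropic subspace to be large enough requires the $L^{(2)}$-dimension bookkeeping --- vanishing of the lower $L^{(2)}$-Betti numbers of $X$, equality of twisted and untwisted Euler characteristics, and the duality pairing of the $L_i$ against the $D_i$. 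As a blueprint your proposal is faithful to the sources the paper cites, but these two points are where the real work lives and would have to be filled in for a complete proof.
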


Property \ref{prop:rhosoln} is integral to providing obstructions to solvability.  If we assume $V$ is a $(2.5)$-solution for $\csum$, and $\Phi:\pi_1(V)\rightarrow \Lambda$ is trivial on $\pi_1(V)^{(3)}$, then $\rho(M_{\csum},\phi)$ is trivial.

Using properties of the $\rho$-invariants, we make the choice of $J$ explicit. First, let $J_0$ be an Arf-invariant zero knot.  Take $R$ to be a ribbon knot with Alexander polynomial, $\Delta_R(t)\neq 1$, and an infecting curve, $\beta$ in $S^3- R$, which generates the rational Alexander module of $R$.  An example of such a choice is shown in Figure \ref{fig:Rk}, where the $k$ in the box denotes $k$ negative full twists, and $\Delta_R(t) = \left(kt-(k+1)\right)\left((k+1)t-k)\right)$.  We will require that $J_0$ have $|\rho_0(J_0)| > C_{R} + 2C_{\rib}$ where $C_R$ and $C_\rib$ are the Cheeger-Gromov constants of $R$ and $\rib$ respectively (properties \ref{prop:rhonaut} and \ref{prop:chgr} of Proposition \ref{prop:rho}).  By Theorem \ref{thm:solvup}, $J \in \mf{F}_1$.

\begin{figure}
\includegraphics[scale = .65]{infknot_Rk.eps}
		\put(-45,50){$k$}
		\put(-70,38){$\beta$}
	\caption{}\label{fig:Rk}
\end{figure}

\subsection{The Alexander Module and Blanchfield Form}
For any knot $K$ with Alexander polynomial $\Delta_K(t)\neq 1$, let $d$ be the leading coefficient of $\Delta_K(t)$ and $\mathcal{Q} = \Z[1/d]$.  The \emph{Alexander module of} $K$ \emph{with} $\mathcal{Q}$\emph{-coefficients} is defined by
\[
	\mathcal{A}^{\mathcal{Q}}(K) \equiv H_1(M_K; \mathcal{Q}[t, t ^{-1}]) 
		\cong\mathcal{A}^{\Z}(K) \otimes_\Z \mathcal{Q}.
\]
As a $\mathcal{Q}$-module, the Alexander module with  $\mathcal{Q}$ coefficients is finitely generated and free, i.e. $\mathcal{A}^{\mathcal{Q}}(K) \cong \mathcal{Q}^{2g}$ where $g$ is the $3$-genus of $K$.  Thus, any element $\gamma \in \mathcal{A}^{\mathcal{Q}}(K)$ may be described as a vector $(\gamma_1,\dots, \gamma_{2g}) \in \mathcal{Q}^{2g}$.

Classically the Blanchfield form, $\bl^\Z_K(\cdot,\cdot)$, is a sesquilinear form on the integral Alexander module of $K$.  This form has a generalization to the Alexander module with $\mathcal{Q}$-coefficients.

\begin{theorem}\cite[Theorem 2.13]{COT}
If $\mathcal{Q}$ is any ring such that $\mathbb{Z}\subseteq \mathcal{Q}\subseteq\mathbb{Q}$, then there is a nonsingular symmetric linking form
\[
	\bl^\mathcal{Q}_K: \mathcal{A}^{\mathcal{Q}}(K)\times\mathcal{Q}^{\mathcal{R}}(K)\rightarrow \mathbb{Q}(t) \mod \mathcal{Q}[t,t^{-1}].
\]
\end{theorem}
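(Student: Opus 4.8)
The plan is to build $\bl^{\mathcal{Q}}_K$ directly on the closed oriented $3$--manifold $M_K$, imitating the classical Blanchfield construction but over the ring $\Lambda := \mathcal{Q}[t,t^{-1}]$ with fraction field $\mathbb{Q}(t)$, and to isolate the one place where the argument is genuinely harder than the field case: $\Lambda$ need not be a principal ideal domain once $\mathcal{Q}\neq\mathbb{Q}$. Write $A := \mathcal{A}^{\mathcal{Q}}(K)=H_1(M_K;\Lambda)$, where the coefficients are twisted by the abelianization $\pi_1(M_K)\twoheadrightarrow\langle t\rangle\cong\mathbb{Z}$. The first step is to check that $A$ is a torsion $\Lambda$--module, equivalently that $H_*(M_K;\mathbb{Q}(t))=0$. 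This is exactly where the hypothesis $\Delta_K\neq 1$ enters: after tensoring with the fraction field the Alexander polynomial becomes a unit, so $H_1(M_K;\mathbb{Q}(t))=0$; since also $H_0(M_K;\mathbb{Q}(t))=\mathbb{Q}(t)/(t-1)\mathbb{Q}(t)=0$, Poincar\'e duality forces the entire $\mathbb{Q}(t)$--homology and cohomology of $M_K$ to vanish.

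With torsion established, I would realize the form as a composite of three isomorphisms. Poincar\'e duality for the closed oriented $3$--manifold gives
\[
	D\colon H_1(M_K;\Lambda)\xrightarrow{\ \cong\ } H^2(M_K;\Lambda),
\]
where $H^2$ carries the conjugate $\Lambda$--structure induced by the involution $t\mapsto t^{-1}$; this conjugation is the source of the sesquilinearity. The short exact coefficient sequence $0\to\Lambda\to\mathbb{Q}(t)\to\mathbb{Q}(t)/\Lambda\to 0$ yields a Bockstein $\beta\colon H^1(M_K;\mathbb{Q}(t)/\Lambda)\to H^2(M_K;\Lambda)$, which is an isomorphism precisely because the intervening $\mathbb{Q}(t)$--cohomology vanishes by the first step. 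Finally the universal coefficient (Kronecker evaluation) map carries $H^1(M_K;\mathbb{Q}(t)/\Lambda)$ to $\ov{\mathrm{Hom}_\Lambda\!\big(A,\,\mathbb{Q}(t)/\Lambda\big)}$. Composing, $x\mapsto \big(y\mapsto \langle \beta^{-1}Dx,\,y\rangle\big)$ defines
\[
	\bl^{\mathcal{Q}}_K\colon A\times A\longrightarrow \mathbb{Q}(t)/\mathcal{Q}[t,t^{-1}],
\]
and $\mathbb{Q}(t)/\Lambda$ is exactly the stated target $\mathbb{Q}(t)\bmod\mathcal{Q}[t,t^{-1}]$.

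Symmetry --- that $\bl^{\mathcal{Q}}_K(x,y)=\ov{\bl^{\mathcal{Q}}_K(y,x)}$ under $t\mapsto t^{-1}$ --- should follow from the graded commutativity of the cup product and the symmetry of the duality pairing in the odd dimension $3$, exactly as in the integral case. The main obstacle I anticipate is the third step: over a non-PID ring the universal coefficient map need not be an isomorphism, since one must control $\mathrm{Ext}^1_\Lambda(H_0(M_K;\Lambda),\mathbb{Q}(t)/\Lambda)$ and verify that the $\mathrm{Hom}$ term is hit exactly. I would handle this by a flat base-change argument: $\mathcal{Q}$ is a localization of $\mathbb{Z}$, hence flat, so $\Lambda$ is flat over $\mathbb{Z}[t,t^{-1}]$ and $A\cong\alex{K}\otimes_{\Z}\mathcal{Q}$. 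This lets me transport the classical nonsingular integral Blanchfield form along the base change, checking that nonsingularity survives because tensoring the self-duality isomorphism $\alex{K}\xrightarrow{\cong}\ov{\mathrm{Hom}(\alex{K},\mathbb{Q}(t)/\Z[t,t^{-1}])}$ with the flat module $\mathcal{Q}$ again yields an isomorphism onto the corresponding $\mathcal{Q}$--dual. Nonsingularity of $\bl^{\mathcal{Q}}_K$ is then immediate, being a composite of isomorphisms.
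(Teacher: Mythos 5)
The paper itself offers no proof of this statement: it is imported verbatim (modulo the typo $\mathcal{Q}^{\mathcal{R}}(K)$ for $\mathcal{A}^{\mathcal{Q}}(K)$) from \cite[Theorem 2.13]{COT}, so the real comparison is with the argument in that source, which follows exactly the direct route you set up --- Poincar\'e duality, the Bockstein of $0\to\Lambda\to\Q(t)\to\Q(t)/\Lambda\to 0$ for $\Lambda=\mathcal{Q}[t,t^{-1}]$, and Kronecker evaluation --- but disposes of the step you flag as the main obstacle by homological algebra rather than by base change: in the universal coefficient spectral sequence the only terms interfering with bijectivity of the Kronecker map are $\mathrm{Ext}^1_\Lambda\bigl(H_0(M_K;\Lambda),\Q(t)/\Lambda\bigr)$ and $\mathrm{Ext}^2_\Lambda\bigl(H_0(M_K;\Lambda),\Q(t)/\Lambda\bigr)$, and both vanish because $H_0(M_K;\Lambda)\cong\mathcal{Q}$ admits the length-one free resolution $0\to\Lambda\xrightarrow{\,t-1\,}\Lambda\to\mathcal{Q}\to 0$ and $t-1$ acts surjectively on $\Q(t)/\Lambda$; no PID hypothesis is needed at any point. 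Your alternative --- transporting the integral form along the flat base change $\Z[t,t^{-1}]\to\mathcal{Q}[t,t^{-1}]$ --- is a genuinely different and workable resolution, but three things need repair or emphasis. First, the identification $\mathrm{Hom}_{\Z[t,t^{-1}]}\bigl(\mathcal{A}^{\Z}(K),\Q(t)/\Z[t,t^{-1}]\bigr)\otimes_{\Z[t,t^{-1}]}\Lambda\cong\mathrm{Hom}_{\Lambda}\bigl(\mathcal{A}^{\mathcal{Q}}(K),\Q(t)/\Lambda\bigr)$ is not formal flatness: Hom commutes with this base change only because $\mathcal{A}^{\Z}(K)$ is finitely presented over the Noetherian ring $\Z[t,t^{-1}]$ and because $\Lambda$ is a localization of $\Z[t,t^{-1}]$ (which also gives $(\Q(t)/\Z[t,t^{-1}])\otimes\Lambda\cong\Q(t)/\Lambda$); one also needs naturality of duality, Bockstein, and evaluation under coefficient change to know the base-changed form is the topologically defined one. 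Second, your reduction does not remove the non-PID difficulty, it relocates it: $\Z[t,t^{-1}]$ is itself not a PID, so the ``classical'' integral nonsingularity you invoke is precisely the same theorem for $\mathcal{Q}=\Z$, and its proof (Blanchfield, Levine) is the Ext computation above; citing it is legitimate, but the direct argument is uniform in $\mathcal{Q}$ and self-contained. Third, torsionness of $H_1(M_K;\Lambda)$ is not ``where the hypothesis $\Delta_K\neq 1$ enters'': the theorem carries no such hypothesis and needs none, since $\Delta_K(1)=\pm 1$ forces $\Delta_K\neq 0$ for every knot; $\Delta_K\neq 1$ only guarantees the module is nonzero, and when $\Delta_K=1$ the statement is vacuous. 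With these points addressed, your proof is correct; what it buys is brevity given the integral case as a black box, while the route the paper relies on buys a self-contained argument valid for all $\mathcal{Q}$ at once.
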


We will employ the Blanchfield form with $\mathcal{Q}$-coefficients for arbitrary $\Z\subseteq \mathcal{Q}\subseteq \mathbb{Q}$ and frequently alternate between coefficient systems.  As we are primarily concerned with instances such that $\bl^{\mathcal{Q}}(x,x)\neq \bl^{\mathcal{Q}}(y,y)$, the distinction is actually unnecessary for our purposes.  Suppose $K$ is a knot with Alexander polynomial $\Delta_K(t)$ and $x$ and $y$ are infecting curves in $S^3-K$.  Also, let $x$ and $y$ denote the corresponding elements in $\mathcal{A}^\mathbb{Z}(K)$.  Then $x\otimes1, y\otimes1$ are the images of $x,y$ under the map
\begin{equation}\label{eqn:modhomo}
	\mathcal{A}^\mathbb{Z}(K)\rightarrow \mathcal{A}^\mathcal{Q}(K)\cong \mathcal{A}^{\mathbb{Z}}(K)\otimes_{\mathbb{Z}}\mathcal{Q}
\end{equation}
given by $z\mapsto z\otimes 1$.  Since $\mathcal{A}^\mathbb{Z}(K)$ has no $\mathbb{Z}$-torsion, this map is injective.  The following proposition, though easy to show, was not found in the literature.  We prove it here for clarity.

\begin{proposition}\label{prop:blqz} For any ring $\mathcal{Q}$ such that $\mathbb{Z}\subseteq\mathcal{Q}\subseteq\Q$,
\[ \bl^\mathbb{Z}(x,x)=\bl^\mathbb{Z}(y,y) \qquad \iff \qquad \bl^\mathcal{Q}(x\otimes 1,x\otimes 1)=\bl^\mathcal{Q}(y\otimes 1,y\otimes 1)\]
\end{proposition}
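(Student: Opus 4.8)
The plan is to deduce both implications from a single observation: the natural projection
\[
	\pi\colon \frac{\mathbb{Q}(t)}{\mathbb{Z}[t,t^{-1}]} \longrightarrow \frac{\mathbb{Q}(t)}{\mathcal{Q}[t,t^{-1}]},
\]
induced by the inclusion $\mathbb{Z}[t,t^{-1}]\subseteq \mathcal{Q}[t,t^{-1}]$, is injective once restricted to the values taken by the integral Blanchfield form. I would first record the naturality of the Blanchfield form under the coefficient change \eqref{eqn:modhomo}: under the identification $\mathcal{A}^\mathcal{Q}(K)\cong \mathcal{A}^\mathbb{Z}(K)\otimes_\mathbb{Z}\mathcal{Q}$, the form $\bl^\mathcal{Q}$ is the extension of coefficients of $\bl^\mathbb{Z}$, so that for all $z,w\in\mathcal{A}^\mathbb{Z}(K)$ one has
\[
	\bl^\mathcal{Q}(z\otimes 1, w\otimes 1) = \pi\bigl(\bl^\mathbb{Z}(z,w)\bigr).
\]
With this in hand the forward implication is immediate: applying $\pi$ to $\bl^\mathbb{Z}(x,x)=\bl^\mathbb{Z}(y,y)$ yields $\bl^\mathcal{Q}(x\otimes1,x\otimes1)=\bl^\mathcal{Q}(y\otimes1,y\otimes1)$.

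For the reverse implication I would show that $\xi \equiv \bl^\mathbb{Z}(x,x)-\bl^\mathbb{Z}(y,y)$ is forced to vanish. By the displayed naturality the hypothesis gives $\pi(\xi)=0$, so $\xi$ lies in $\ker\pi = \mathcal{Q}[t,t^{-1}]/\mathbb{Z}[t,t^{-1}]$; choose a representative $s(t)\in\mathcal{Q}[t,t^{-1}]$. The crucial extra constraint is that $\xi$ is $\Delta_K$-torsion. Indeed, since $\mathcal{A}^\mathbb{Z}(K)$ admits a \emph{square} presentation matrix $tV-V^T$ arising from a Seifert matrix $V$, the adjugate identity shows that $\Delta_K(t)=\det(tV-V^T)$ annihilates the entire integral Alexander module; sesquilinearity then gives $\Delta_K\cdot\bl^\mathbb{Z}(x,x)=\bl^\mathbb{Z}(\Delta_K x,x)=0$ and likewise for $y$, whence $\Delta_K(t)\,s(t)\in\mathbb{Z}[t,t^{-1}]$.

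It remains to conclude from $s\in\mathcal{Q}[t,t^{-1}]$ together with $\Delta_K s\in\mathbb{Z}[t,t^{-1}]$ that in fact $s\in\mathbb{Z}[t,t^{-1}]$, which forces $\xi=0$ and finishes the proof. Here I would invoke the multiplicativity of content (Gauss's lemma): writing $c_p(f)=\min_i v_p(f_i)$ for the $p$-adic content of a Laurent polynomial $f$, one has $c_p(\Delta_K s) = c_p(\Delta_K)+c_p(s)$ for every prime $p$. Because $\Delta_K(1)=\pm1$ for any knot, $\Delta_K$ is primitive, so $c_p(\Delta_K)=0$ for all $p$; integrality of $\Delta_K s$ gives $c_p(\Delta_K s)\geq 0$, and therefore $c_p(s)\geq 0$ for every $p$, i.e. $s\in\mathbb{Z}[t,t^{-1}]$. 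Note that this argument is insensitive to the particular choice of $\mathcal{Q}$, which is precisely why the statement holds uniformly for all $\mathbb{Z}\subseteq\mathcal{Q}\subseteq\mathbb{Q}$.

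The main obstacle I anticipate is justifying the naturality square with care — namely that $\bl^\mathcal{Q}$ really is the coefficient extension of $\bl^\mathbb{Z}$ landing in the enlarged quotient $\mathbb{Q}(t)/\mathcal{Q}[t,t^{-1}]$, rather than an a priori different pairing; this is where one must unwind the homological definition of the Blanchfield form and use flatness of $\mathcal{Q}$ over $\mathbb{Z}$. The two genuinely arithmetic inputs — that $\Delta_K$ annihilates $\mathcal{A}^\mathbb{Z}(K)$ and that $\Delta_K$ is primitive — are what make the two torsion conditions on $\xi$ (lying in $\ker\pi$ and being $\Delta_K$-torsion) incompatible unless $\xi=0$, and this incompatibility is the conceptual heart of the proposition.
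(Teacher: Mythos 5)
Your proposal is correct and follows essentially the same route as the paper: both rest on naturality of the Blanchfield form under the coefficient extension $\mathbb{Z}\subseteq\mathcal{Q}$ (which the paper cites from Leidy rather than proving), and both then rule out a nonzero difference lying in $\mathcal{Q}[t,t^{-1}]/\mathbb{Z}[t,t^{-1}]$ by an arithmetic argument whose engine is $\Delta_K(1)=\pm 1$. The paper argues contrapositively with a lowest-terms representative $p(t)/\delta_K(t)$, $\delta_K\mid\Delta_K$, and a coprimality argument in $\mathbb{Z}[t,t^{-1}]$, while you argue directly via $\Delta_K$-torsion and $p$-adic contents; this is the same Gauss-type divisibility fact in slightly different packaging, with your version merely making explicit the standard fact that $\Delta_K$ annihilates $\mathcal{A}^{\mathbb{Z}}(K)$.
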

\begin{proof}
The $\implies$ direction is obvious.  We prove the $\Longleftarrow$ direction by contrapositive.
Suppose $\bl^\mathbb{Z}(x,x) \neq \bl^\mathbb{Z}(y,y)$.  Then 
\[
	\bl^\Z(x,x)-\bl^\Z(y,y) = \frac{p(t)}{\delta_K(t)} \in \mathbb{Q}(t)\mod\mathbb{Z}[t,t^{-1}]
\]
where $(p(t),\delta_K(t))=1$ and $\delta_K(t)$ divides $\Delta_K(t)$.  Notice that the field of fractions of both $\Z[t,t^{-1}]$ and $\mathcal{Q}[t,t^{-1}]$ is $\Q(t)$ and the ring monomorphism $\Z\hookrightarrow\mathcal{Q}$ induces the following $\Z[t,t^{-1}]$-module homomorphisms. 
\begin{equation*}
\begin{array}{rl}
	h:& \Q(t)\hookrightarrow \Q(t)  \\
	\overline{h}:& \Q(t) / \Z[t,t^{-1}] \rightarrow \Q(t)/\mathcal{Q}[t,t^{-1}] \\
	h_\ast:& \mathcal{A}^\mathbb{Z}(K)\rightarrow \mathcal{A}^\mathcal{Q}(K)\cong \mathcal{A}^{\mathbb{Z}}(K)\otimes_{\mathbb{Z}}\mathcal{Q}
\end{array}
\end{equation*}
The first map is the identity and the third is equivalent to the map of Equation \ref{eqn:modhomo}.
However, given any element $z\in\mathcal{A}^\Z(K)$, we have
\begin{equation*}
	\bl^\mathcal{Q}(z\otimes 1,z\otimes 1) = \overline{h}(\bl^\Z(z,z))
\end{equation*}
\cite[Theorem 4.7]{Lei1}.  If $\bl^\mathcal{Q}(x\otimes 1,x\otimes 1)-\bl^\mathcal{Q}(y\otimes 1,y\otimes 1)=0$, this implies
\begin{equation*}
	\overline{h}\left(\frac{p(t)}{\delta_K(t)}\right)=0.
\end{equation*}
The map $\overline{h}$ is given by modding out by the subring  $\mathcal{Q}[t,t^{-1}]/\Z[t,t^{-1}]\subset \Q(t)/\Z[t,t^{-1}]$. This means $\frac{p(t)}{\delta_K(t)}$ reduces to a polynomial $F(t)\in\mathcal{Q}[t,t^{-1}]$.  After multiplying through by some constant $q\in \Z$ which is a unit in $\mathcal{Q}$, we obtain the following equation in $\Z[t,t^{-1}]$:
\begin{equation*}
	q\cdot p(t) = f(t)\delta_K(t),
\end{equation*}
where $ q\cdot F(t) = f(t) \in \Z[t,t^{-1}]$.  Since $\delta_K(1)=\pm 1$, regarding $q$ as a constant polynomial in $\Z[t,t^{-1}]$, $(q,\delta_K(t))=1$, so $q$ divides $f(t)$.  This means 
\[
	\frac{p(t)}{\delta_K(t)} = \frac{f(t)}{q} \in \Z[t,t^{-1}],
\]
and so $\bl^\Z(x,x) - \bl^\Z(y,y) = 0$ in $\Q(t) / \Z[t,t^{-1}]$, a contradiction.
\end{proof}

Because of Proposition \ref{prop:blqz}, we are free to suppress the distinction between the integral and rational Blanchfield forms in comparing the Blanchfield self-linking of two infecting curves.  We will frequently pass between the two and, by an abuse of notation, allow $\bl(x,x)$ to identify both $\bl^\Z(x,x)$ and $\bl^\mathcal{Q}(x\otimes 1, x\otimes 1)$ where understood.

Recall that $\ea$ and $\eb$ are infecting curves for $\rib$, and $J$ is a $1$-solvable knot defined by $J=R(\beta, J_0)$ where $|\rho_0(J_0)|>C_R + 2C_\rib$.  Then, require $L$ to be any $1$-solvable knot with Alexander polynomial $\Delta_L(t)$ satisfying one of the two following conditions:
\begin{enumerate}
	\item \label{cond1} $\Delta_R$ and $\Delta_L$ are \textit{strongly coprime}, i.e.  $\Delta_R(t^n), \Delta_L(t^m)$ are relatively prime for every $n, m \in \mathbb{Z}$.
	\item \label{cond2} $\Delta_R(t^m)$ and $\Delta_L(t^n)$ have no common roots unless $n= \pm m$.
\end{enumerate}
Certainly (\ref{cond1}) implies (\ref{cond2}).  If (\ref{cond1}) holds, $K_1 \equiv \rib(\ea, J)$ and $K_2\equiv \rib(\eb, L)$ are distinct (and even linearly independent) in $\mathcal{C}$ by a generalization of Cochran-Harvey-Leidy \cite{CHL5}.  If \ref{cond2}, a secondary restriction will be given by the Blanchfield self-linking of the infecting curves $\ea,\eb$.    In particular, we need only require that $\bl(\ea,\ea)\neq \bl(\eb,\eb)$.

\section{The Main Theorems}\label{sec:main}
\begin{theorem}\label{thm:distinct} If $R$ and $\rib$ are ribbon knots, let $J_0$ be an Arf-invariant zero knot such that $|\rho_0(J_0)| > C_R+2C_{\rib}$.  Suppose $J\equiv R(\beta,J_0)$ where $\beta$ generates the rational Alexander module of $R$.  Then form $K_1\equiv \rib(\ea, J)$ where $\bl(\ea,\ea) \neq 0$ and $K_2\equiv \rib(\eb,L)$.  If $L$ is any $1$-solvable knot such that
\begin{enumerate}
	\item $\Delta_L(t)$ and $\Delta_R(t)$ are strongly coprime, or
	\item $\Delta_L(t^m)$ and $\Delta_R(t^n)$ share a common root only when $n=\pm m$ and $\bl(\ea,\ea)\neq\bl(\eb,\eb)$,
\end{enumerate}
Then $K_1$ and $K_2$ are distinct in $\mathcal{C}$.
\end{theorem}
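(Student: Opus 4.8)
The plan is to show that $\csum$ is not $(2.5,\mathcal{S})$-solvable, which suffices: by Theorem \ref{thm:solvup} both $K_1$ and $K_2$ lie in $\mf{F}_2$, so if they were concordant then $\csum$ would be slice and hence $(2.5,\mathcal{S})$-solvable. I would therefore assume for contradiction that $\csum \in \mf{F}_{2.5}^{\mathcal{S}}$ via a spin $4$-manifold $V$ with $\partial V = M_{\csum}$, set $G = \pi_1(V)$ and $\Lambda = G/G_{\mathcal{S}}^{(3)}$, and let $\Phi\colon \pi_1(M_{\csum}) \to \Lambda$ be the induced coefficient system. By Proposition \ref{prop:rho}(\ref{prop:rhosoln}), $\rho(M_{\csum},\Phi) = 0$. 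The entire argument then consists of contradicting this by producing a nonzero lower bound for $|\rho(M_{\csum},\Phi)|$.

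Next I would decompose $\rho(M_{\csum},\Phi)$ along the infection structure. Since $\csum = K_1 \# -K_2$, a standard connected-sum cobordism together with Novikov additivity (Proposition \ref{prop:rho}(\ref{prop:rhoadd})) splits the invariant into the contributions of $M_{K_1}$ and $M_{-K_2}$; as each $K_i = \rib(\eta_i,\cdot)$ is itself built by infection, I would cut along the infection tori and cap the infecting pieces with the $1$-solutions guaranteed by the $1$-solvability of $J$ and $L$. Using Proposition \ref{prop:rho}(\ref{prop:rhofactors})--(\ref{prop:rhoadd}), this expresses $\rho(M_{\csum},\Phi)$ as a signed sum of $\rho$-invariants of $M_\rib$ (once for each summand), of the intermediate knot $R$ inside $J = R(\beta,J_0)$, of $J_0$, and of $L$. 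The deepest term is $\pm\rho_0(J_0)$, which by hypothesis satisfies $|\rho_0(J_0)| > C_R + 2C_\rib$; every remaining term that is not forced to vanish is controlled by the Cheeger--Gromov bound of Proposition \ref{prop:rho}(\ref{prop:chgr}), contributing at most $C_R$ (for $R$) and $C_\rib$ for each of the two copies of $M_\rib$, i.e. at most $C_R + 2C_\rib$ in total.

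The crux is to show that the large term $\pm\rho_0(J_0)$ genuinely survives, while the $L$-contribution is forced to vanish. Survival amounts to proving that $\Phi$ does not send the tower $\ea \rightsquigarrow \beta \rightsquigarrow J_0$ into $G_{\mathcal{S}}^{(3)}$: if it did, the deepest infection would be invisible and $\rho_0(J_0)$ would not appear. This is precisely where the Blanchfield form enters. The $(2.5,\mathcal{S})$-solution $V$ endows the relevant localized higher-order Alexander module with a metabolizer $P$, a self-annihilating submodule for the associated Blanchfield pairing, into which the infection curves are constrained to map. Because $\bl(\ea,\ea) \neq 0$, the class $[\ea]$ cannot be absorbed into an isotropic submodule at the first order and hence cannot be killed by $\Phi$; combined with the fact that $\beta$ generates the rational Alexander module of $R$, this forces the coefficient system to remain nontrivial all the way down to $J_0$, so that $\rho_0(J_0)$ appears with nonzero coefficient. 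To rule out cancellation from the $K_2$ side I would use the hypotheses on $\Delta_L$ and $\Delta_R$: under (1), strong coprimality means the orders of the higher-order modules attached to the two infections share no factor under any reparametrization $t \mapsto t^n$, so the metabolizer must respect the splitting and the $L$-tower carries a trivial coefficient system; under (2), a shared root can occur only at $n = \pm m$, and there the extra hypothesis $\bl(\ea,\ea) \neq \bl(\eb,\eb)$ distinguishes $[\ea]$ from $[\eb]$ at the level of the Blanchfield self-linking, again preventing the metabolizer from identifying the two towers. In either case the $L$-contribution vanishes by Proposition \ref{prop:rho}(\ref{prop:rhotrivcomp}).

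The main obstacle is exactly this last step, the metabolizer and non-triviality analysis of the localized Blanchfield pairing that simultaneously establishes non-vanishing of the $\ea$-tower and non-cancellation by the $\eb$-tower. This is the technical content I expect to defer to Section \ref{sec:bforms}; everything before it is bookkeeping with $\rho$-invariants. Granting it, I conclude
\[
    |\rho(M_{\csum},\Phi)| \geq |\rho_0(J_0)| - (C_R + 2C_\rib) > 0,
\]
contradicting $\rho(M_{\csum},\Phi) = 0$. Hence $\csum$ is not $(2.5,\mathcal{S})$-solvable, so it is not slice and $K_1$ and $K_2$ are distinct in $\mathcal{C}$.
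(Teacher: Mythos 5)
Your skeleton is the paper's: assume $\csum$ is $(2.5,\mathcal{S})$-solvable via $V$, assemble the tower of infection cobordisms, split the signature defect by Novikov additivity and the fact that the cobordisms $E,F_1,F_2,G$ have all second homology from their boundaries (not by capping $M_J$ or $M_L$ with $1$-solutions --- no such coefficient-system-compatible caps exist, and capping $M_J$ would erase the very term $\rho_0(J_0)$ you need), show $\rho_0(J_0)$ survives while $\rho(M_L,\Phi)=0$, and contradict $|\rho_0(J_0)|>C_R+2C_\rib$ via Proposition \ref{prop:rho}(\ref{prop:chgr}). But there is a genuine gap at the crux: you never construct the commutator series $\mathcal{S}$, and its construction (Definition \ref{def:Sseries}) is the central idea rather than background. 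In the paper, $\mathcal{S}$ agrees with the rational derived series through level two and is then localized at the right divisor set $S$ generated by $\{\Delta_L(\mu^i\eb'\mu^{-i})\}$. The vanishing of the $L$-contribution is then automatic from this choice and uses \emph{no} coprimality hypothesis: since $\mathcal{A}(L)$ is $\Delta_L(t)$-torsion and $\Delta_L(\eb')\in S$, one gets $H_1(M_L;\mathcal{R})\cong\mathcal{A}(L)\otimes\mathcal{R}=0$, so $\Phi|_{\pi_1(M_L)}$ factors through $\Z$ and $\rho(M_L,\Phi)=\rho_0(L)=0$ because $L$ is $1$-solvable (Lemma \ref{lemma:rhoL}). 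Your alternative --- deducing the $L$-vanishing from hypotheses (1)/(2) via a metabolizer ``respecting the splitting'' --- would fail with an unlocalized series: $\eb'$ can be nontrivial in $\pi_1(W)^{(1)}/\pi_1(W)^{(2)}$, making $\Phi|_{\pi_1(M_L)}$ nonabelian, and then $\rho(M_L,\Phi)$ is bounded only by $C_L$, which the hypotheses do not control.

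Dually, the survival of $\rho_0(J_0)$ requires more than $\bl(\ea,\ea)\neq 0$ and $\beta$ generating $\mathcal{A}(R)$, which is all you invoke. Those two facts give the first-order step (Lemma \ref{lemma:nontrivial}: the kernel of $\qalex{\rcsum}\to\qalex{V}$ is isotropic, so $\ea$ survives) and identify $\beta\otimes 1$ as a generator of $H_1(M_J;\mathcal{R})$. The remaining danger is precisely that the localization introduced to kill the $L$-tower might also kill the $J$-tower: one must prove $\mathcal{A}(J)\otimes\mathcal{R}\cong\bigl(\Q\Gamma/\Delta_R(\ea)\Q\Gamma\bigr)S^{-1}\neq 0$, where $\Gamma=\pi_1(W)/\pi_1(W)^{(2)}_{\mathcal{S}}$ and $\mathcal{R}=\Q\Gamma S^{-1}$, i.e.\ that the generator is not $S$-torsion. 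This is exactly where hypotheses (1)/(2) are spent in Lemma \ref{lemma:rhoJ}: an $S$-torsion relation forces $\Delta_R(\ea)$ to divide a product of terms $\Delta_L(\mu^i\eb'\mu^{-i})$ in $\Q F$; under (1) this contradicts Proposition \ref{prop:coprimes}, and under (2) it forces $\mu^i\eb'\mu^{-i}=\ea^{\pm 1}$, which is ruled out by evaluating $\bl_{\rcsum}$ on $\tau_*^i(\eb')\pm\ea$ and using $\bl(\ea,\ea)\neq\bl(\eb,\eb)$ together with the splitting of $\tau_*$ under connected sum. So the roles you assign to the hypotheses are reversed, and as sketched neither half of your deferred technical step can be completed. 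A smaller but real setup issue of the same kind: the coefficient system must be defined on the full tower $Z$ as $\Phi:\pi_1(Z)\to\pi_1(Z)/\pi_1(Z)^{(3)}_{\mathcal{S}}$ and \emph{restricted} to $V$ (using functoriality of $\mathcal{S}$ so that Proposition \ref{prop:rho}(\ref{prop:rhosoln}) applies there); defining $\Lambda$ from $\pi_1(V)$ as you do gives no way to extend $\Phi$ over the outer cobordisms, since the inclusion-induced maps point the wrong way.
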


Before describing an outline of this proof, we introduce the following corollaries which illustrate the impact of Theorem \ref{thm:distinct}.

\begin{corollary}\label{cor:distinct}
Suppose $J \defined R(\beta,J_0)$ where $J_0$ is an Arf-invariant zero knot, $R$ is the ribbon knot from Figure \ref{fig:Rk} with $\beta$ as shown.  Let $K_1 \equiv \rib(\ea,J)$ and $K_2 \equiv \rib(\eb,J)$.  If $|\rho_0(J_0)|>C_R+2C_\rib$ and $\bl_\rib(\ea,\ea)\neq\bl_\rib(\eb,\eb)$ then $K_1$ and $K_2$ are not concordant.
\end{corollary}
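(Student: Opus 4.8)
The plan is to obtain Corollary \ref{cor:distinct} as the special case $L = J$ of Theorem \ref{thm:distinct}. The hypotheses on $R$, $\rib$, $J_0$, $\beta$, and $J \defined R(\beta, J_0)$ are assumed verbatim in the corollary, so the argument reduces to three checks: that $J$ is a legitimate choice for the infecting knot $L$; that one may assume $\bl_\rib(\ea,\ea) \neq 0$; and that, once $L = J$, one of the two coprimality alternatives of Theorem \ref{thm:distinct} is satisfied.

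The first point is immediate: by Theorem \ref{thm:solvup}, $J \defined R(\beta, J_0) \in \mf{F}_1$, so $J$ is a valid $1$-solvable choice for $L$. For the second, note that since both $K_1$ and $K_2$ are infections of $\rib$ by the \emph{same} knot $J$, the roles of $\ea$ and $\eb$ may be interchanged. The hypothesis $\bl_\rib(\ea,\ea) \neq \bl_\rib(\eb,\eb)$ forces at least one self-linking to be nonzero, and I relabel so that $\ea$ carries it; then $\ea$ can play the distinguished role demanded by Theorem \ref{thm:distinct}, with $J$ serving simultaneously as the distinguished infecting knot and as $L$.

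The crux is the third point. Because $\beta$ is an infecting curve it has zero linking number with $R$, so $J \defined R(\beta, J_0)$ is a satellite of winding number zero and hence $\Delta_J(t) \doteq \Delta_R(t)\,\Delta_{J_0}(1) \doteq \Delta_R(t)$ up to units in $\Z[t,t^{-1}]$. Thus $\Delta_L = \Delta_J$ and $\Delta_R$ share every root, alternative (1) fails, and I must verify alternative (2). Its Blanchfield clause $\bl(\ea,\ea) \neq \bl(\eb,\eb)$ is given, so the remaining task is the root condition that $\Delta_R(t^m)$ and $\Delta_R(t^n)$ have a common root only when $n = \pm m$. For the knot $R$ of Figure \ref{fig:Rk} one has $\Delta_R(t) = (kt-(k+1))((k+1)t-k)$, whose roots are the real numbers $r \defined (k+1)/k$ and $r^{-1}$ with $r > 1$. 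If $\zeta$ is a common root of $\Delta_R(t^m)$ and $\Delta_R(t^n)$, then $\zeta^m, \zeta^n \in \{r, r^{-1}\}$; comparing moduli gives $m\log|\zeta| = \pm\log r$ and $n\log|\zeta| = \pm\log r$, and since $r \neq 1$ forces $|\zeta| \neq 1$, dividing yields $n/m = \pm 1$. This modulus comparison is the only genuinely computational content, and it works precisely because the roots of $\Delta_R$ lie off the unit circle.

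With alternative (2) established, Theorem \ref{thm:distinct} applies verbatim and gives that $K_1$ and $K_2$ are distinct in $\mathcal{C}$, hence not concordant. I expect the only subtle point to be the tension between the asymmetry of Theorem \ref{thm:distinct}, which singles out $\ea$ through the condition $\bl(\ea,\ea) \neq 0$, and the symmetric hypothesis of the corollary; the relabeling above dispatches this, and everything else is bookkeeping of hypotheses.
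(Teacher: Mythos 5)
Your proposal is correct and follows essentially the same route as the paper: relabel so that $\bl_\rib(\ea,\ea)\neq 0$, observe that $\Delta_J(t)=\Delta_R(t)=(kt-(k+1))((k+1)t-k)$ has roots $\{k/(k+1),(k+1)/k\}$ so that $\Delta_R(t^m)$ and $\Delta_R(t^n)$ share a root only when $n=\pm m$, and then invoke alternative (2) of Theorem \ref{thm:distinct} with $L=J$. Your modulus comparison and the winding-number-zero satellite formula for $\Delta_J$ merely spell out details the paper asserts without proof, so the two arguments coincide.
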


\begin{proof}[Proof that Theorem \ref{thm:distinct} implies Corollary \ref{cor:distinct}]
We assume without loss of generality $\bl(\ea,\ea)\neq 0$.  Since $\Delta_R(t)=\Delta_J(t) = (kt-(k+1))((k+1)t-k)$ has roots $\{\frac{k}{k+1},\frac{k+1}{k}\}$, $\Delta_R(t^m)$ and $\Delta_R(t^n)$ share no common roots unless $n=\pm m$.  The result follows from Theorem \ref{thm:distinct}.
\end{proof}

The above stress the distinction between any two infecting curves.  We next generalize these results to produce infinitely many distinct concordance classes.

\begin{corollary}\label{cor:intense}
Suppose $\rib$ is any knot with $\Delta_\rib\neq 1$.  Then there exists a (countably infinite) set of curves $\{\eta_i\}$ in $S^3-\rib$ which are unknotted in $S^3$ and have linking number $0$ with $\rib$, and also a knot $J$ such that each $K_i\equiv \rib(\eta_i,J)$ generates a distinct concordance class in $\mathcal{C}$.
\end{corollary}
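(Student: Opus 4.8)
The plan is to reduce the entire statement to repeated pairwise applications of Corollary \ref{cor:distinct}. That corollary already guarantees that for the fixed knot $J = R(\beta, J_0)$, with $R$ the ribbon knot of Figure \ref{fig:Rk} and $|\rho_0(J_0)| > C_R + 2C_\rib$, two infections $\rib(\ea, J)$ and $\rib(\eb, J)$ are non-concordant as soon as $\bl_\rib(\ea, \ea) \neq \bl_\rib(\eb, \eb)$. So it suffices to produce a countably infinite family of infecting curves $\{\eta_i\} \subset S^3 - \rib$ whose Blanchfield self-linkings $\bl_\rib(\eta_i, \eta_i) \in \mathbb{Q}(t)/\mathbb{Q}[t,t^{-1}]$ are pairwise distinct and all nonzero; then each pair $K_i, K_j$ with $i \neq j$ lies in distinct concordance classes by Corollary \ref{cor:distinct}, and hence so does the whole family.

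First I would produce a single element $x \in \alex{\rib}$ with $\bl_\rib(x, x) \neq 0$. Since $\Delta_\rib \neq 1$, the module $\alex{\rib}$ is nonzero and $\bl_\rib$ is nonsingular. Because $\bl_\rib$ is symmetric and takes values in the $\mathbb{Q}$-vector space $\mathbb{Q}(t)/\mathbb{Q}[t,t^{-1}]$, on which $2$ is invertible, the polarization identity $\bl_\rib(x+y, x+y) = \bl_\rib(x,x) + 2\bl_\rib(x,y) + \bl_\rib(y,y)$ shows that if every self-linking vanished then $\bl_\rib$ would vanish identically, contradicting nonsingularity. Hence some $x$ has $\bl_\rib(x,x) \neq 0$.

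Next I would manufacture infinitely many distinct self-linking values by scaling. For an integer $c$, sesquilinearity gives $\bl_\rib(cx, cx) = c^2 \bl_\rib(x,x)$. Since $\mathbb{Q}(t)/\mathbb{Q}[t,t^{-1}]$ is a $\mathbb{Q}$-vector space, multiplication by any nonzero rational is injective, so the values $c^2 \bl_\rib(x,x)$ are pairwise distinct and nonzero as $c$ ranges over $\{1, 2, 3, \dots\}$. Setting $x_i \equiv i \cdot x \in \alex{\rib}$ thus yields classes with pairwise distinct nonzero self-linking. The remaining point is geometric realization: for each $i$ I must exhibit an honest infecting curve $\eta_i \subset S^3 - \rib$ — embedded, unknotted in $S^3$, and of zero linking number with $\rib$ — whose class in $\alex{\rib}$ is $x_i$. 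This rests on the standard fact that every element of the Alexander module is carried by such a curve, since the commutator subgroup of $\pi_1(M_\rib)$ surjects onto $\alex{\rib}$ and a representing simple closed curve may be unknotted and unlinked by general position and band moves; the explicit construction is carried out for $\rib = 9_{46}$ in Section \ref{sec:application1}.

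With the family $\{\eta_i\}$ in hand I would finish by fixing $J = R(\beta, J_0)$ with $C_R, C_\rib$ the (fixed) Cheeger-Gromov constants and $J_0$ chosen so that $|\rho_0(J_0)| > C_R + 2C_\rib$, then invoking Corollary \ref{cor:distinct} for each pair $i \neq j$: since $\bl_\rib(\eta_i, \eta_i) \neq \bl_\rib(\eta_j, \eta_j)$, the knots $K_i \equiv \rib(\eta_i, J)$ and $K_j \equiv \rib(\eta_j, J)$ are non-concordant, so the $K_i$ represent distinct classes in $\mathcal{C}$. The hard part is the geometric realization step: the self-linking bookkeeping is routine given nonsingularity and the $\mathbb{Q}$-vector-space structure of the codomain, but passing from the algebraically chosen elements $x_i$ back to embedded, unknotted, zero-linking curves with self-linking exactly $i^2 \bl_\rib(x,x)$ requires care, which is why the general mechanism is illustrated concretely through the worked $9_{46}$ computation.
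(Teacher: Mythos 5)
Your proof follows the same skeleton as the paper's: fix $J=R(\beta,J_0)$ with $|\rho_0(J_0)|>C_R+2C_\rib$, find one class with nonzero Blanchfield self-linking, scale by integers to get infinitely many pairwise distinct self-linkings, realize these classes by unknotted zero-linking curves, and conclude by pairwise application of Corollary \ref{cor:distinct}. Your scaling step is in fact cleaner than the paper's: you work in $\Q(t)/\Q[t,t^{-1}]$, a $\Q$-vector space, so $(i^2-j^2)\bl(x,x)\neq 0$ is immediate, whereas the paper argues integrally in $\Q(t)/\Z[t,t^{-1}]$ using $\delta_\rib(1)=\pm1$; by Proposition \ref{prop:blqz} the rational and integral comparisons are equivalent, so this substitution is legitimate.

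There is, however, a genuine error in your first step. The Blanchfield form is not symmetric $\Q$-bilinear; it is sesquilinear and Hermitian with respect to the involution $t\mapsto t^{-1}$, so the correct expansion is
\[
	\bl(x+y,x+y)=\bl(x,x)+\bl(x,y)+\ov{\bl(x,y)}+\bl(y,y),
\]
not $\bl(x,x)+2\bl(x,y)+\bl(y,y)$. If every self-linking vanished, polarization would only yield $\bl(x,y)+\ov{\bl(x,y)}=0$ for all $x,y$, i.e.\ that every value of the form is anti-invariant under the involution, which does not by itself contradict nonsingularity. The conclusion you want is still true, but it needs a further argument: for instance, expanding $\bl(x+ty,x+ty)=0$ gives $(t^{-1}-t)\bl(x,y)=0$, so $(t^2-1)$ annihilates every value of the form; since all denominators divide $\Delta_\rib$ and $\Delta_\rib(\pm 1)\neq 0$ (the determinant of a knot is odd), this forces $\bl\equiv 0$, contradicting nonsingularity. (The paper simply asserts existence of such a curve from nonsingularity of $\bl$ on $\alex{\rib}$.) Separately, the realization step that you flag as "the hard part" and justify only by "general position and band moves" is handled by the paper in one line: the class $i[\eta]\in\alex{\rib}$ is realized by the $(i,1)$-cable of $\eta$, which is automatically unknotted in $S^3$ (an $(i,1)$-curve on the boundary torus of an unknotted solid torus), has linking number zero with $\rib$ (the meridional wrap bounds a disk in $\nu(\eta)$, disjoint from $\rib$), and is homologous to $i\eta$ in the infinite cyclic cover for the same reason. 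Replacing your appeal to folklore by this cable construction would close that soft spot.
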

\begin{proof}
In order to employ Corollary $\ref{cor:distinct}$, we must ensure the existence of an infinite family of curves $\eta_i$ which have distinct Blanchfield self-linking, i.e. $\bl(\eta_i,\eta_i)=\bl(\eta_j,\eta_j)$ only when $i=j$.  Since $\rib$ has nontrivial Alexander polynomial and the Blanchfield form on $\mathcal{A}^\Z(\rib)$ is nonsingular, there must exist some curve $\eta \subset S^3-J$ such that $\bl(\eta,\eta)\neq 0$.  We use the following proposition.
\begin{proposition}
Suppose $\eta \subset S^3-\rib$ is an unknotted curve in $S^3$ with $lk(\eta,\rib)= 0$ and $\bl(\eta,\eta)\neq 0$.  For each $i\in \Z_{\geq 0}$, set $\eta_i = i\eta \in \mathcal{A}^\Z(\rib)$.  Then $\bl(\eta_i,\eta_i)=\bl(\eta_j,\eta_j)$ only when $i=j$, and each $\eta_i$ is represented by an unknotted curve in $S^3-\rib$.
\end{proposition}
\begin{proof}
Suppose $\bl(\eta,\eta) = \frac{p(t)}{\delta_\rib(t)} \notin \Z[t,t^{-1}]$ such that $(p(t),\delta_\rib(t))=1$ and $\delta_\rib(t)$ divides $\Delta_\rib(t)$.  Then we have
\begin{equation*}
	\bl(\eta_i,\eta_i) = \bl(i\eta,i\eta) = i^2\bl(\eta,\eta) = i^2\frac{p(t)}{\delta_\rib(t)}
\end{equation*}
If $\bl(\eta_i,\eta_i) = \bl(\eta_j,\eta_j)$, this implies $(i^2-j^2)\bl(\eta,\eta)= f(t) \in \Z[t,t^{-1}]$.  We have the following equation
\begin{equation*}
	(i^2-j^2)p(t)=f(t)\delta_\rib(t)
\end{equation*}
where, since $\frac{p(t)}{\delta_\rib(t)}\neq 0$, we can assume that $i^2-j^2$ does not divide $f(t)$ over $\Z[t,t^{-1}]$.  Since $\delta_\rib(q)=\pm 1$, $i^2-j^2 \in \{0,\pm 1\}$.  If $i^2-j^2 = \pm 1$, this contradicts $\bl(\eta,\eta)\neq 0$.  As $i,j \geq 0$,  $i^2-j^2$ is zero only when $i=j$.
We must next show that each $\eta_i$ is unknotted in $S^3$.  But notice that the element $i\eta \in \mathcal{A}^\Z(\rib)$ is realized by the $(i,1)$-cable of $\eta$.  This completes the proof.
\end{proof}
By taking $J$ to be the knot given in the statement of Corollary \ref{cor:distinct}, we obtain a family of pairwise distinct concordance classes $\{K_i\equiv\rib(\eta_i,J)\}$ for $i\geq 0$.
\end{proof}

The following corollary illustrates how uncommon it is for two infecting curves, $\eta$ and $\gamma$ in $S^3 - \rib$ to yield concordant knots.  By viewing them as elements of $\mathcal{A}^\Q(K)\cong \Q^{2g}$, we get an approximate answer to this question by seeing that the set of infecting curves $\gamma$ which yield knots concordant to $K = \rib(\eta, J)$ must lie on a quadratic hypersurface in $\Q^{2g}$.

\begin{proposition}\label{prop:dense}
Let $\rib$ be a ribbon knot with Alexander polynomial $\Delta_\rib\neq 1$ and $J \equiv R(\beta,J_0)$ as above.  Fix some infecting curve $\eta \subset S^3 - \rib$ and let $K \equiv \rib(\eta,J)$.  Then,
\[
	\{ [\gamma] | \bl(\gamma,\gamma) = \bl(\eta,\eta) \}
\]
is the subset of a quadric hypersurface in $\mathbb{Q}^{2g}$, and thus $\{ [\gamma] | K' = \rib(\gamma, J_0) \text{ is not concordant to } K \}$ is dense as a subset of $\Q^{2g}$.
\end{proposition}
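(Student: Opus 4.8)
The plan is to reduce the equation $\bl(\gamma,\gamma)=\bl(\eta,\eta)$, which a priori lives in $\Q(t)/\Q[t,t^{-1}]$, to a finite system of honest quadratic equations in the coordinates of $\gamma$, and then exhibit a single nontrivial one. By Proposition \ref{prop:blqz} I may work throughout with the rational form $\bl^\Q$, which is nonsingular and symmetric by the cited theorem of \cite{COT}. First I would fix a $\Q$-basis $e_1,\dots,e_{2g}$ of $\mathcal{A}^\Q(\rib)\cong\Q^{2g}$, so that $\gamma=(\gamma_1,\dots,\gamma_{2g})$. Since $\bl$ is $\Q[t,t^{-1}]$-sesquilinear and the involution fixes $\Q$, restricting scalars to $\Q$ gives $\Q$-bilinearity, whence $\bl(\gamma,\gamma)=\sum_{i,j}\gamma_i\gamma_j\,\bl(e_i,e_j)$. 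Thus $\gamma\mapsto\bl(\gamma,\gamma)$ is a quadratic map $\Q^{2g}\to\Q(t)/\Q[t,t^{-1}]$ with coefficients $\bl(e_i,e_j)$.

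Next I would clear denominators. Choose $\delta(t)\in\Q[t]$ with $\delta(0)\neq0$ and $\delta\mid\Delta_\rib$ as a common denominator, and write each $\bl(e_i,e_j)=p_{ij}(t)/\delta(t)$ and $\bl(\eta,\eta)=p_\eta(t)/\delta(t)$ using the unique representatives with $\deg p_{ij},\deg p_\eta<\deg\delta$ (this uses $\tfrac1\delta\Q[t,t^{-1}]/\Q[t,t^{-1}]\cong\Q[t]/(\delta)$, valid since $\delta(0)\neq0$). Then $\bl(\gamma,\gamma)-\bl(\eta,\eta)=0$ in $\Q(t)/\Q[t,t^{-1}]$ is equivalent to $N_\gamma(t):=\sum_{i,j}\gamma_i\gamma_j\,p_{ij}(t)-p_\eta(t)\equiv0\pmod{\delta}$. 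Because $\deg N_\gamma<\deg\delta$, this congruence forces $N_\gamma=0$ as a polynomial. Reading off the coefficient of each power $t^k$ yields $\deg\delta$ scalar equations $Q_k(\gamma)=c_k$, each $Q_k$ a $\Q$-quadratic form in $\gamma$ and each $c_k$ a constant coming from $p_\eta$. Hence the level set is the intersection of these affine quadrics in $\Q^{2g}$.

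The crux — and the step I expect to be the main obstacle — is verifying that at least one of these equations is nontrivial, i.e. that some $Q_k(\gamma)-c_k$ is a nonzero polynomial, so that its zero set is a genuine quadric hypersurface. This is where nonsingularity enters: if every $Q_k$ were constant, then every quadratic coefficient $p_{ij}$ would vanish, forcing $\bl(e_i,e_j)=0$ for all $i,j$ and hence $\bl\equiv0$, contradicting nonsingularity of $\bl^\Q$ (note $\Delta_\rib\neq1$ guarantees $\mathcal{A}^\Q(\rib)\neq0$). So some $p_{i_0 j_0}\neq0$, whence the corresponding $Q_k$ has nonzero quadratic part and $\{Q_k=c_k\}$ is a quadric hypersurface containing the level set. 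I note that this coefficient count is the clean route, and one does not need a polarization argument to rule out an identically vanishing diagonal.

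Finally I would deduce density. The complement in $\Q^{2g}$ of the zero set of a nonzero polynomial is dense: otherwise the polynomial would vanish on a Euclidean-open subset of $\Q^{2g}$, hence (as $\Q^{2g}$ is dense in $\mathbb{R}^{2g}$) on an open subset of $\mathbb{R}^{2g}$, forcing it to be identically zero. On this dense complement one has $\bl(\gamma,\gamma)\neq\bl(\eta,\eta)$. Since $J=R(\beta,J_0)$ with $R$ as in Corollary \ref{cor:distinct}, the Alexander-polynomial hypothesis there holds, so Corollary \ref{cor:distinct} applies and gives that $\rib(\gamma,J)$ is not concordant to $K=\rib(\eta,J)$ for every such $\gamma$. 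Realizability of each relevant $\gamma$ by an actual unknotted, zero-linking infecting curve is handled exactly as in the proof of Corollary \ref{cor:intense} (via cabling), so the set of infecting curves producing knots not concordant to $K$ contains a dense subset of $\Q^{2g}$ and is therefore dense.
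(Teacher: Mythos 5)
Your reduction---clearing denominators, comparing coefficients of powers of $t$, and finishing with density of the complement of a nonzero polynomial's zero set---is sound, and it is a genuinely more elementary route than the paper's, which instead converts the Blanchfield diagonal into a single $\Q$-valued quadratic form using Trotter's trace function $\chi$ twisted by a suitable $\lambda_0$. However, your crux step has a genuine gap. The Blanchfield form is \emph{not} symmetric over $\Q$; it is Hermitian with respect to the involution $t\mapsto t^{-1}$, i.e. $\bl(e_j,e_i)=\ov{\bl(e_i,e_j)}$, so in general $p_{ji}\neq p_{ij}$. Consequently, the hypothesis that every $Q_k-c_k$ is the zero polynomial forces only the \emph{symmetrization} of the coefficient matrix to vanish: $p_{ii}=0$ for all $i$ and $p_{ij}+p_{ji}=0$ for $i\neq j$. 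It does not force $p_{ij}=0$: a matrix of values $\left(\bl(e_i,e_j)\right)$ that is antisymmetric under the naive transpose kills every $Q_k$ identically, and nothing in a pure coefficient count over $\Q$ rules this out, since nonsingularity constrains the full matrix, not its symmetrization. So the inference ``every quadratic coefficient $p_{ij}$ would vanish, forcing $\bl(e_i,e_j)=0$'' is a non sequitur, and your closing remark that no polarization argument is needed is exactly where the proof breaks: the quadratic map $\gamma\mapsto\bl(\gamma,\gamma)$ sees only the symmetrized form, and ruling out an identically vanishing symmetrization is the real content of the nontriviality claim.

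The claim you need is true, but proving it requires the sesquilinear structure over $\Q[t,t^{-1}]$, not merely $\Q$-bilinearity. If the diagonal vanished identically on $\mathcal{A}^{\Q}(\rib)$, then expanding $\bl(x+y,x+y)=0$ gives $\ov{\bl(x,y)}\equiv-\bl(x,y)$ for all $x,y$, while expanding $\bl(tx+y,tx+y)=0$ and using sesquilinearity gives $t^{\pm1}\bl(x,y)+t^{\mp1}\ov{\bl(x,y)}\equiv 0$; combining the two yields $(t-t^{-1})\bl(x,y)\equiv 0$ for all $x,y$. Nonsingularity then forces $(t^{2}-1)$ to annihilate $\mathcal{A}^{\Q}(\rib)$, which is impossible for a nonzero knot module: $\Delta_\rib(1)=\pm1$ and $\Delta_\rib(-1)\neq 0$ imply $t-1$ and $t+1$ act as automorphisms, and $\Delta_\rib\neq1$ makes the module nonzero. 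This Hermitian subtlety is precisely what the paper's proof is organized around: it observes that $\chi(\widehat{\bl}(\gamma,\gamma))=0$ for \emph{all} $\gamma$ exactly because $\bl(x,y)=\ov{\bl(y,x)}$, and for that reason builds the quadric from $\chi(\lambda_0\widehat{\bl}(\gamma,\gamma))$ for a twisting element $\lambda_0$ whose existence encodes the nonvanishing of the diagonal. With your crux step repaired by the polarization argument above, the rest of your proof (the finite system of quadrics, the density argument, the appeal to Corollary \ref{cor:distinct}, and the cabling realization as in Corollary \ref{cor:intense}) goes through and matches the paper's conclusion.
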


\begin{proof}
Following work of Trotter \cite{Tr0, Tr1}, let $z = (1-t)^{-1}$ and note that $\Q(t) = \Q(z)$.  Furthermore, since $z$ gives an automorphism of $\mathcal{A}^{\Z}(K)$, enlarging coefficients from $\Z[t,t^{-1}]$ to $\Z[t,t^{-1},z]$ has no effect on the module structure.  Consider the map
\[
	\frac{\Q(t)}{\Z[t,t^{-1}]}\xrightarrow{j} \frac{\Q(t)}{\Z[t,t^{-1},z]}
\]
given by inclusion.  The form given by $\widehat{\bl}(x,y) = j(\bl(x,y))$ is a nonsingular sesquilinear form and $j$ maps the image of $\bl(\cdot,\cdot)$ one-to-one onto the image of $\widehat{\bl}(\cdot,\cdot)$ \cite{Tr0}.

Using a partial fraction decomposition, any element in $\Q(t)$ may be written uniquely as the sum of a polynomial and proper fractions where the numerator has lower degree than the denominator.  Thus, $\Q(t)$ splits over $\Q$ as the direct sum of $\Q[t.t^{-1},z]$ and a subspace $P$ where $P$ consists of $0$ and proper fractions with denominator coprime to $t$ and $1-t$.  Then we have a $\Q$-linear map $\chi: \Q(t) \rightarrow \Q$ defined by 
\begin{equation*}
	\chi(f) = \left\{
			 \begin{array}{ll}
			f'(1) & f\in P \\
			0  & f\in \Q[t,t^{-1},z]
			\end{array} 
		\right.
\end{equation*}
Since $\chi$ is $0$ on $\Q[t,t^{-1},z]$, it is well defined on $\Q(z) \mod \Q[t,t^{-1},z]$ and thus on the image of $\widehat{\bl}$.  Note that the value of $\widehat{\bl}(x,y)$ is uniquely determined by the value of $\chi(\lambda \widehat{\bl}(x,y))$ for all $\lambda\in \Z[t,t^{-1},z]$, and furthermore, $\chi$ satisfies
\[
	\chi(\overline{f}) = \chi(f) \qquad \chi((t-1)f) = f(1)
\]
for any $f\in P$ \cite[Section 2]{Tr1}.  Since $\bl(x,y) = \overline{\bl(y,x)}$ for any $x,y \in \mathcal{A}^\Z(K)$, $\chi(\widehat{\bl}(\gamma,\gamma))=0$ for all $\gamma$.  This is also seen by noting that, by definition, $\bl$ is given by
\[
	\bl(x,y) = \overline{y}(1-t)\left(tV - V^\intercal\right)^{-1}x.
\]
Since $\bl$ is nonsingular, there must exist some $\lambda_0\in \Z[t,t^{-1},z]$ such that $\chi(\lambda_0\bl(x,x))$ is nonzero for some $x\in \mathcal{A}^{Z}(K)$.  For $\gamma \in S^3-K$, define $\widehat{\chi} : \mathcal{A}^\Q(K) \cong \Q^{2g} \rightarrow \Q$ by
\begin{align*}
	\widehat{\chi}(\gamma) &\equiv \chi((\lambda_0\widehat{\bl}(\gamma,\gamma)).
\end{align*}
Suppose $\widehat{\chi}(\eta) = c\in\Q$.  Then $\widehat{\chi}(x_1,\dots, x_{2g})=c$ is a rational equation in $2g$ variables and the left-hand side is a homogeneous polynomial of degree $2$.  That is,
\begin{align*}
	\widehat{\chi}(x_1,\dots,x_{2g}) =& \sum_{i,j} \chi\left(\lambda_0\bl(x_i,x_j)\right) \\
	=& \sum_{i,j}x_i,x_j \chi\left(\lambda_0\bl(e_i,e_j)\right)\\
	=& \sum_{i,j}a_{i,j} x_ix_j = c
\end{align*}
where $a_{i,j} = \chi\left(\lambda_0\bl(e_i,e_j)\right)$ and $\{e_i\}$ is a basis for $\mathcal{A}^\Q(K) \cong \Q^{2g}$.  Since $\bl$ is nonsingular, not all $a_{i,j}=0$.  By Theorem \ref{thm:distinct}, the set of infecting curves $\gamma\subset S^3-\rib$ such that $K' = \rib(\gamma,J)$ is concordant to $K = \rib(\eta,J)$ must be ones such that $\bl(\gamma,\gamma)= \bl(\eta,\eta)$.   Therefore $\gamma = (\gamma_1,\dots, \gamma_{2g})$ must be a solution to $\widehat{\chi}(x_1,\dots,x_{2g}) = c$.  

Consider the polynomial $F(x_1,\dots, x_{2g}) =\widehat{\chi}(x_1,\dots,x_{2g})-c =0$.  If $c\neq 0$, this polynomial is clearly nonconstant.  Otherwise, given the choice of $\lambda_0$ and that $\bl$ is nonsingular there must exist some element $\gamma\in \mathcal{A}^\Z$ such that $\widehat{\chi}(\gamma)\neq 0$ and hence $F(\gamma)\neq 0$ and $F$ is a nonconstant polynomial.  The zero locus of $\widehat{\chi}(x_1,\dots,x_{2g})-c$ is a quadric hypersurface in $\Q^{2g}$ whose compliment is dense.
\end{proof}

In the proof Proposition \ref{prop:dense}, we distinguish infecting curves by evaluating Trotter's trace function $\chi$ on $\lambda_0\widehat{\bl}(\gamma,\gamma)$ for one particular value $\lambda_0 \in \Q[t,t^{-1},z]$.  Since $\widehat{\bl}(\gamma,\gamma)$ is uniquely determined by the value of $\chi(\lambda \widehat{\bl}(\gamma,\gamma))$ for all $\lambda \in \Q[t,t^{-1},z]$, one could attempt to distinguish the infecting curves $\gamma$ and $\eta$ by using multiple values of $\lambda$ when $\chi(\lambda_0\widehat\bl(\gamma,\gamma))=\chi(\lambda_0\widehat\bl(\eta,\eta))$.

We now proceed to the proof of Theorem \ref{thm:distinct}.

\begin{proof}[Proof of Theorem \ref{thm:distinct}]  We will show the stronger fact that $\csum$ is not $2.5$-solvable.  The proof is by contradiction.  $\csum$ is $2$-solvable by \cite[Proposition 2.7]{CHL5}, and suppose it is $2.5$-solvable via $V$.  We construct a tower of cobordisms for $M_{\csum}$.  Note that from the infection operation arises a natural cobordism between zero surgeries on the knots involved.  Given that $K_1\equiv \rib(\ea, J)$, denote by $F_1$ the cobordism obtained by first taking the disjoint union of  $M_{\rib}\times[0,1]$ and $M_{J}\times[0,1]$.  Then identify a neighborhood of $\ea \times \{1\}$, denoted by $\nu(\ea)$, in $M_{\rib}\times\{1\}$ with $\nu(J)$, a neighborhood of $J\times\{1\}$ in $M_{J}\times\{1\}$ given by $\left(M_J\setminus (S^3-J)\right)\times\{1\}$ as shown in Figure \ref{fig:F1}.  This identification is done such that the longitude of $\nu(J)$ is identified with the meridian of $\nu(\eta_1)$ and the meridian of $\nu(J)$ is identified with the reverse of the longitude of $\nu(\eta_1)$.  That is,
\begin{figure}
\includegraphics[scale=.7]{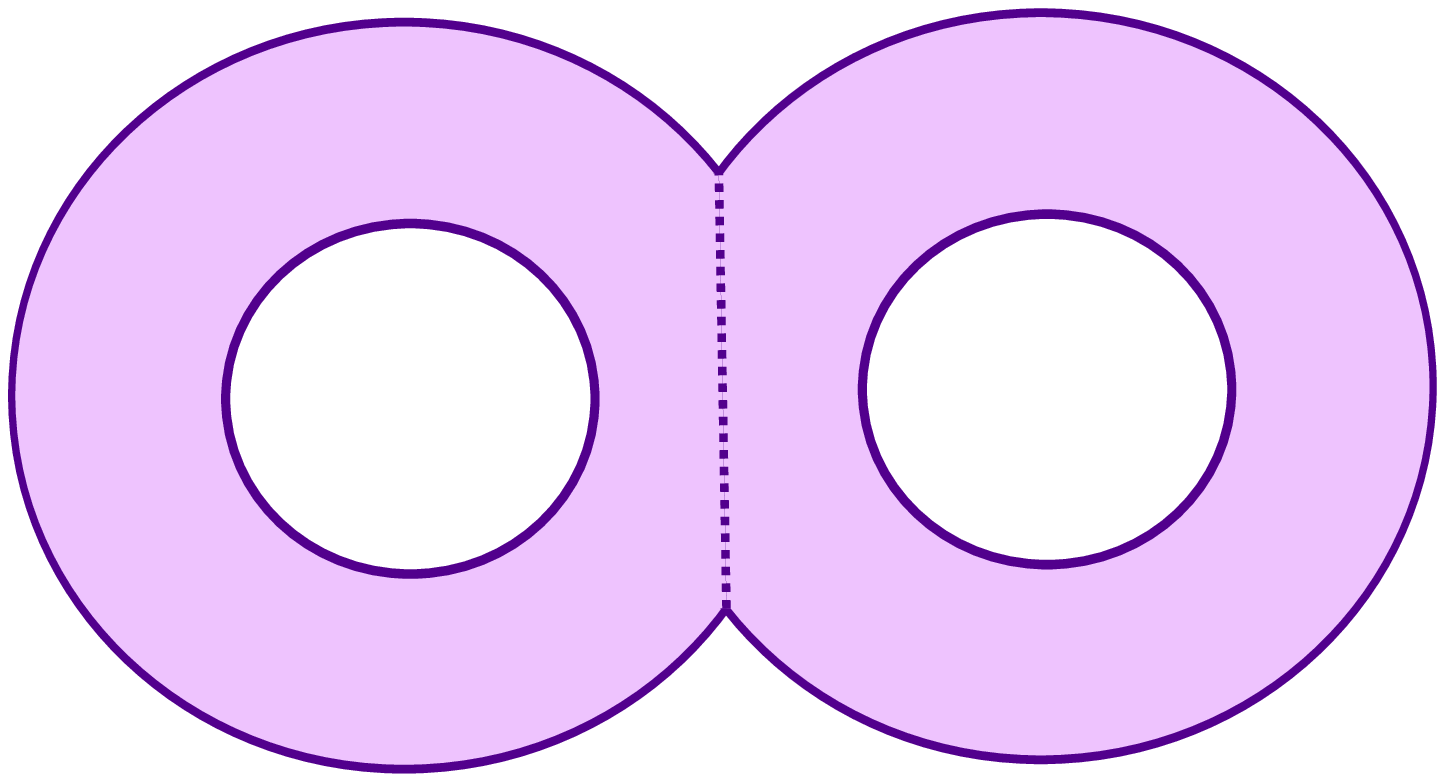}
\put(-165,110){$\nu(\eta)$}
\put(-100,25){$M_J\times [0,1]$}
\put(-235,25){$M_\rib\times [0,1]$}
\caption{$F_1$: Cobordism given by infection $K_1 =\rib(\ea,J)$}\label{fig:F1}
\end{figure}

\begin{equation*}
	F_1 \equiv \frac{(M_{\rib}\times[0,1])\cup(M_J\times[0,1])}{\nu(\ea) \sim \nu(J)}
\end{equation*}

\noindent
The boundary of $F_1$ is then given by $\bdy F_1 = M_\rib \sqcup M_J \sqcup \ov{M_{K_1}}$, where by $\ov{X}$, we mean the manifold $X$ with opposite orientation. Similarly, we let $F_2$ denote a cobordism given by the infections $K_2=\rib(\eb,L)$. The infection $J \equiv R(\beta,J_0)$ will yield a cobordism denoted $G$.  Since connected sum $\csum$ may also be viewed as the infection of $K_1$ by $-K_2$ along a meridian, form a cobordism $E$ between zero surgeries on $K_1$, $-K_2$, and $\csum$ in a similar manner.  Define $W'$ to be the union of $V$ and $E$ along their common boundary.  Similarly, $W$ is the union $W' \cup F_1\cup \ov{F_2}$.  Then, let $Z$ be the manifold obtained by joining the cobordisms $G$ to $W$ along $M_J$.  The boundary of $Z$ is given by $\partial Z = M_\rib \sqcup M_R \sqcup M_{J_0} \sqcup\ov{M_\rib} \sqcup \ov{M_L}$.  In overview,

\begin{align*}
	\bdy V &= M_{\csum} \\
	\bdy E &= M_{K_1} \sqcup M_{K_2} \sqcup \ov{M_{\csum}} \\
	\bdy F_1 &= M_{J} \sqcup M_\rib \sqcup \ov{M_{K_1}} \\
	\bdy F_2 &= M_{L} \sqcup M_{\rib} \sqcup \ov{M_{K_2}} \\
	\bdy G &= M_{J_0} \sqcup M_R \sqcup \ov{M_J}
\end{align*}

\begin{align*}
	W' &= V \cup_{M_{\csum}} E \\
	W	&= W' \cup_{M_{K_1}} F_1 \cup_{\ov{M_{K_2}}} \ov{F_2} \\
	Z	&= W \cup_{M_J} G.
\end{align*}
				                    
\begin{figure}
	\begin{center}
	\includegraphics[scale =1]{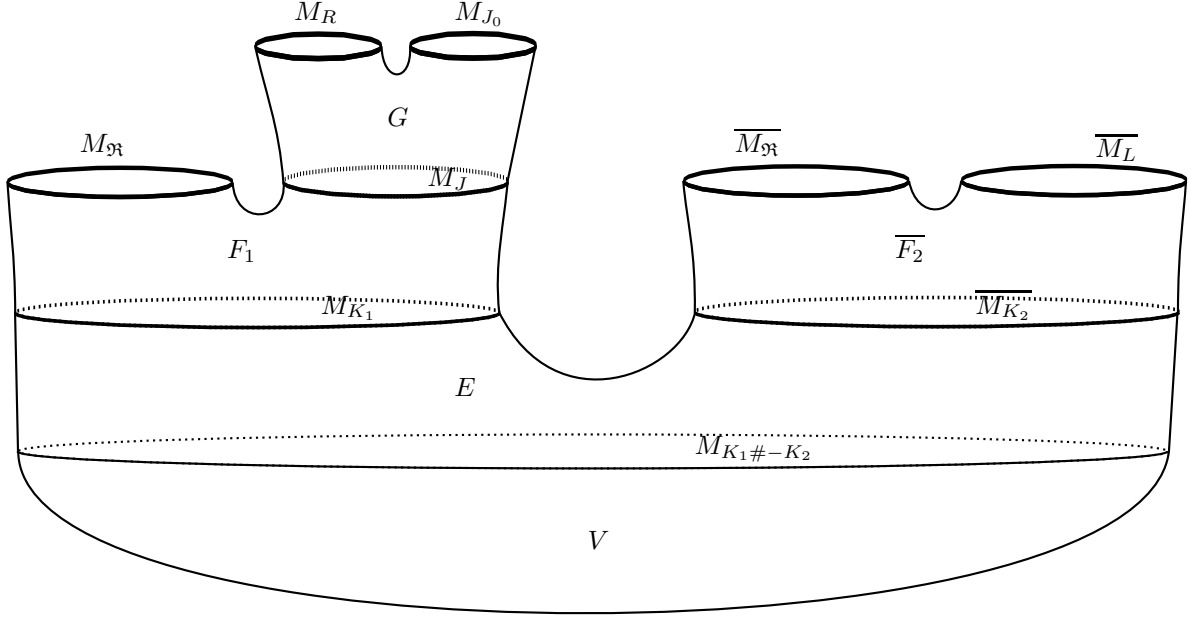}
		\put(-225, 25){\textbf{$V$}}\put(-185,61){$M_{\csum}$}
		\put(-275,83){\textbf{$E$}}\put(-80,114){$\overline{M_{K_	2}}$}\put(-325,114){$M_{K_1}$}			\put(-360,135){\textbf{$F_1$}}\put(-285,162){$M_J$}\put(-415,175){$M_\rib$}
		\put(-110,135){\textbf{$\ov{F_2}$}}\put(-35,173){$\ov{M_L}$}\put(-170,175){$\ov{M_\rib}$}
		\put(-300,185){\textbf{$G$}}\put(-335,225){$M_{R}$}\put(-275,225){$M_{J_0}$}
	\caption{The $4$-manifold $Z$, constructed by a tower of cobordisms}\label{tower}
	\end{center}
\end{figure}
 
Unfortunately, the derived series itself will not be useful in finding an obstruction to the $2.5$-solvablity of $\csum$.  Instead, we define a partial commutator series, $\mathcal{S}$, which will be slightly larger than the rational derived series so that
\[
	\pi_1(Z)^{(3)} \subset \pi_1(Z)_{\mathcal{S}}^{(3)}.
\]
Notice in Definition \ref{def:Sseries}, $\mathcal{S}$ will be equivalent to the rational derived series on its first two terms.

\begin{definition}\label{def:Sseries}
Let $G$ be a group with $G/G^{(1)} = \langle \mu \rangle \cong \mathbb{Z}$, then the \textit{derived series localized at $\mathcal{S}$} is defined recursively by
\begin{equation*}
	\begin{split}
		G^{(0)}_{\mathcal{S}} &\equiv G \\
		G^{(1)}_{\mathcal{S}}  = G_r^{(1)} &\equiv \ker \left(G \rightarrow \frac{G}{[G,G]}\otimes_{\mathbb{Z}}\mathbb{Q}\right) \\
		G^{(2)}_{\mathcal{S}} = G_r^{(2)} & \equiv \ker \left(G_{\mathcal{S}}^{(1)} \rightarrow 
			\frac{G_{\mathcal{S}}^{(1)}}{[G_{\mathcal{S}}^{(1)},G_{\mathcal{S}}^{(1)}]}\otimes_{\mathbb{Z}[G/G_{\mathcal{S}}^{(1)}]}\mathbb{Q}[G/G_{\mathcal{S}}^{(1)}]\right)\\
		G_{\mathcal{S}}^{(3)}  &\equiv \ker \left( G_{\mathcal{S}}^{(2)} \rightarrow 
			\frac{G_{\mathcal{S}}^{(2)}}{[G_{\mathcal{S}}^{(2)},G_{\mathcal{S}}^{(2)}]}\otimes_{\mathbb{Z}[G/G_{\mathcal{S}}^{(2)}]}\mathbb{Q}[G/G_{\mathcal{S}}^{(2)}]S^{-1}\right).
	\end{split}
\end{equation*}
\end{definition}
\noindent
The right divisor set $ S ~\subset ~\mathbb{Q}[G_{\mathcal{S}}^{(1)}/G^{(2)}_{\mathcal{S}}] \subset \mathbb{Q}[G/G^{(2)}_{\mathcal{S}}]$ is the multiplicative set generated by  $\{ \Delta_L(\mu^i \eta_2' \mu^{-i}) | i \in \mathbb{Z} \}$.  Here, ${\eta_2}'$ denotes the image of $\eb$ in $M_{-\rib}\subset M_{\rcsum}$ and is considered as an element of $\pi_1(W)$ by inclusion.  $S$ is by definition a multiplicatively closed set with unity, and $0$ is not an element of $\mathcal{S}$.  Since $\mathbb{Q}[G^{(1)}/G_{\mathcal{S}}^{(2)}]$ is abelian, this verifies $S$ is a right divisor set.  Furthermore, let $\gamma \in G/G_{\mathcal{S}}^{(2)}$.  If $q(a) \in S$, then $\gamma q(a) \gamma^{-1} = q(\gamma a \gamma^{-1}) \in S$. Therefore, $\mu^{i}\eb'\mu^{-i}$ is invariant under conjugation by $G/G_{\mathcal{S}}^{(2)}$, and we see that $S$ is invariant under conjugation by $\mathbb{Q}[G/G_{\mathcal{S}}^{(2)}]$.  

Consider the coefficient system on  $W$ given by the projection
\begin{equation*}
	\Phi : \pi_1(Z) \rightarrow \pi_1(Z)/\pi_1(Z)^{(3)} \rightarrow \pi_1(Z)/\pi_1(Z)^{(3)}_{\mathcal{S}}\equiv \Lambda
\end{equation*}
Because of property (\ref{prop:rhoadd}) of Proposition \ref{prop:rho} (and after suppressing notation by $\sigma_{\Lambda}^{(2)} \equiv \sigma^{(2)} \text{ and } \Phi |_{X} \equiv \Phi$ where understood), we have

\begin{equation}\label{eqn:sigdef4}
	\begin{split}
	\sigdef{Z}{\Phi}  =	&    \left(\sigdef{V}{\Phi}\right) + \left(\sigdef{E}{\Phi}\right)  + \left(\sigdef{F_1}{\Phi}\right) \\
				  	&+  \left(\sigdef{\ov{F_2}}{\Phi}\right) + \left(\sigdef{G}{\Phi}\right)
	\end{split}
\end{equation}

\noindent By assumption, $V$ is a $2.5$-solution.  Property (\ref{prop:rhosoln}) of Proposition \ref{prop:rho} yields $\sigdef{V}{\Phi} = 0$.  For $E, F_1, F_2,$ and $G$, all of the (integral and twisted) second homology comes from the boundary \cite[Lemma 2.4]{CHL3}, and therefore

\[
	\sigdef{E}{\Phi} = \sigdef{F_1}{\Phi} = \sigdef{F_2}{\Phi}=\sigdef{G}{\Phi} = 0.
\]

\noindent However, $\sigdef{Z}{\Phi} = \rho(\partial Z, \Phi |_{\partial})$, and 

\[
	0 	= \rho(\partial Z, \Phi)  
		= \rho(M_{J_0},\Phi) + \rho(\ov{M_{L}}, \Phi) + \rho(M_\rib,\Phi)+\rho(\ov{M_\rib},\Phi)+ \rho(M_R,\Phi).
\]

We employ the following lemmas, to be proven in Section \ref{sec:bforms}.

\medskip
\noindent{\bf Lemma \ref{lemma:rhoJ}}\quad
{\sl The restriction of $\Phi$ to $\pi_1(M_{J_0})$ factors non-trivially through $\mathbb{Z}$.}
\\

\noindent
{\bf Lemma \ref{lemma:rhoL}}\quad
{\sl The restriction of $\Phi$ to $\pi_1(M_L)$ also factors through $\Z$ and yields $\rho(M_L,\Phi) =0.$}\\

\noindent After proving Lemma \ref{lemma:rhoJ} and using properties (\ref{prop:rhofactors}) and (\ref{prop:rhonaut}) of Proposition \ref{prop:rho}, we will have $\rho(M_{J_0}, \Phi) = \rho_0(J_0)$.  Secondly, by Lemma \ref{lemma:rhoL} and property (\ref{prop:rhotrivcomp}) of Proposition \ref{prop:rho}, $\rho(\ov{M_L},\Phi) = -\rho(M_L,\Phi) = 0$.  This yields the following equation.

\[
	\rho_0(J_0) = - \rho(M_{\rib}, \Phi) - \rho(M_{-\rib},\Phi)  - \rho(M_{R},\Phi).
\]

\noindent 	This is a contradiction since, by hypothesis,
\[
	|\rho_0(J_0)| > C_R+2C_\rib\geq  \rho(M_{\rib}, \Phi)+  \rho(M_{-\rib},\Phi)+  \rho(M_{R},\Phi)
\]
This completes the proof modulo the proofs of Lemmas \ref{lemma:rhoJ} and \ref{lemma:rhoL}
\end{proof}


\section{Blanchfield Form Restrictions}\label{sec:bforms}
In this section, we prove the Lemmas needed for the completion of Theorem \ref{thm:distinct}.  We continue to use notation which was defined in Sections \ref{sec:background} and \ref{sec:main}.  Before proving Lemma \ref{lemma:rhoJ}, we must first show that the infecting curve $\ea$ represents a nontrivial element of $\pi_1(W)^{(1)}/\pi_1(W)_\mathcal{S}^{(2)}$ by inclusion.  Note that $\pi_1(M_{J_0})$ is normally generated by the meridian $\mu_0$ which is isotopic in $Z$ to the $\beta \in \pi_1(M_R)^{(1)}$.  Similarly, the meridian of $M_R$ is identified with $\ea$ and inclusion induces
\begin{equation*}
	\ea\in\pi_1(M_\rib)^{(1)} \subset \pi_1(W)^{(1)} \subset \pi_1(Z)^{(1)}
\end{equation*}
which implies that $\mu_0 \sim \beta$ is in $\pi_1(Z)^{(2)}$.  If $\ea \in \pi_1(Z)^{(2)}$, then $\pi_1(M_{J_0})$ is mapped to a subset of $\pi(Z)^{(3)}$ and the restriction of $\Phi$ to $\pi_1(M_{J_0})$ is trivial.

Continue to let $\eta_2' \subset M_{\rcsum}$ denote the image of $\eta_2$ after reversing the orientation of $M_\rib$ and taking the connected sum to form $M_{\rcsum}$.  By an abuse of notation, $\ea$ and $ \eta_2'$ also represent the corresponding elements in the Alexander module and $\pi_1$.  Let $\mathcal{A}(X)$ denote the Alexander module of the space $X$ with rational coefficients.  The following proofs closely follow the methodology of \cite[Lemmas 7.6, 7.6]{CHL6}

\begin{lemma}\label{lemma:nontrivial}
The infecting curve $\ea$ represents a nontrivial element of $A \equiv \pi_1(W)^{(1)}/\pi_1(W)^{(2)}_\mathcal{S}$
\end{lemma}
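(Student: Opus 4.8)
The plan is to identify $A$ with the first-order rational Alexander module of $W$ and then to show that $\ea$ survives in it by a metabolizer argument driven by the hypothesis $\bl(\ea,\ea)\neq 0$. First I would observe that, since $H_1(W;\Z)\cong\Z$ is generated by the meridian $\mu$ and is torsion-free, the first term of the $\mathcal{S}$-series coincides with the ordinary commutator subgroup $\pi_1(W)^{(1)}$, while the second $\mathcal{S}$-term is the rational derived subgroup. Consequently
\[
	A=\pi_1(W)^{(1)}/\pi_1(W)^{(2)}_\mathcal{S}\cong H_1\!\left(W;\Q[t,t^{-1}]\right)=\mathcal{A}(W),
\]
the rational Alexander module of $W$, with its $\Q[t,t^{-1}]$-module structure induced by $\mu\mapsto t$. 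Under this identification, proving $\ea$ nontrivial in $A$ is the same as proving $[\ea]\neq 0$ in $\mathcal{A}(W)$.

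Next I would track the single class $\ea$ through the tower. The curve occurs on the boundary component $M_\rib\subset\partial W$ coming from $F_1$; since infection is performed along $\ea$, the same curve appears in $M_{K_1}$, and passing through the cobordisms $F_1$ and $E$ it is carried to the class $(\ea,0)$ in
\[
	\mathcal{A}(M_{\csum})\cong\mathcal{A}(K_1)\oplus\mathcal{A}(-K_2),
\]
where $M_{\csum}=\partial V$ sits in the interior of $W$. All of these copies of $\ea$ become homologous in $W$, so the image of $(\ea,0)$ under $\mathcal{A}(M_{\csum})\to\mathcal{A}(W)$ equals $[\ea]$. Because each building block $E$, $F_1$, $\ov{F_2}$ has all of its (twisted) second homology carried by its boundary \cite[Lemma 2.4]{CHL3}, a Mayer--Vietoris computation shows that the inclusion-induced map $\mathcal{A}(V)\to\mathcal{A}(W)$ is injective. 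Therefore, were $[\ea]=0$ in $\mathcal{A}(W)$, the class $(\ea,0)$ would already lie in the kernel $P\defined\ker\!\big(\mathcal{A}(M_{\csum})\to\mathcal{A}(V)\big)$.

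The crux is the metabolizer property of the solution. Since $V$ is (by the standing assumption for contradiction) a $2.5$-solution for $\csum$, it is in particular a $(0.5)$-solution, and hence $P$ is self-annihilating for the rational Blanchfield form of $\csum$, i.e.\ $P=P^\perp$ \cite{COT}. Thus $(\ea,0)\in P$ would force
\[
	0=\bl_{\csum}\!\big((\ea,0),(\ea,0)\big)=\bl_{K_1}(\ea,\ea)=\bl_\rib(\ea,\ea),
\]
where the last equalities use that infection along the null-homologous unknotted curve $\ea$ leaves the Alexander module and Blanchfield form of $\rib$ unchanged, identifying $[\ea]\in\mathcal{A}(K_1)$ with $[\ea]\in\mathcal{A}(\rib)$, and that the Blanchfield form of a connected sum splits orthogonally. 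This contradicts the hypothesis $\bl(\ea,\ea)\neq 0$, so $[\ea]\neq 0$ and the lemma follows.

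I expect the main obstacle to be the bookkeeping in the middle step: verifying that the several copies of $\ea$ appearing on $M_\rib$, $M_{K_1}$, and $M_{\csum}$ really represent one and the same class in $\mathcal{A}(W)$, and that the cobordisms introduce no new relations, so that $\mathcal{A}(V)\to\mathcal{A}(W)$ is injective. The metabolizer input is standard, and the hypothesis $\bl(\ea,\ea)\neq 0$ is used exactly once, at the very end, to exclude membership in the isotropic submodule $P$.
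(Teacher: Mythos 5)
Your overall architecture coincides with the paper's: you embed $A$ in the rational Alexander module $\mathcal{A}(W)$ (this is the paper's injection $i_6$, valid because $\pi_1(W)^{(2)}_{\mathcal{S}}=\pi_1(W)^{(2)}_r$; note $A$ is only the image of $\pi_1(W)^{(1)}$ in $\mathcal{A}(W)$, not all of it, but injectivity is all you actually use), you carry $\ea$ to the class $(\ea,0)\in\mathcal{A}(\csum)\cong\mathcal{A}(\rcsum)$, you invoke isotropy of $P=\ker\left(\mathcal{A}(\csum)\to\mathcal{A}(V)\right)$ coming from the solution $V$, and you exclude $(\ea,0)\in P$ using $\bl_\rib(\ea,\ea)\neq 0$ together with the orthogonal splitting of the Blanchfield form under connected sum. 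All of this is exactly how the paper argues, and those steps are sound.

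The gap is in your justification that $\mathcal{A}(V)\to\mathcal{A}(W)$ is injective. The fact you cite --- that $E$, $F_1$, $\ov{F_2}$ have all their (twisted) second homology carried by the boundary --- is what the paper uses to kill the signature defects of these pieces in the proof of Theorem \ref{thm:distinct}; it does not yield the injectivity via Mayer--Vietoris. Indeed, setting $C=E\cup F_1\cup\ov{F_2}$, Mayer--Vietoris for $W=V\cup_{M_{\csum}}C$ only tells you that a class $x\in H_1(V;\Q[t,t^{-1}])$ dying in $H_1(W;\Q[t,t^{-1}])$ can be written $x=i_{V*}(y)$ with $y\in\ker\left(H_1(M_{\csum};\Q[t,t^{-1}])\to H_1(C;\Q[t,t^{-1}])\right)$; to conclude $x=0$ you must control that kernel, and no statement about $H_2$ does so. The missing input is the $\pi_1$-level fact the paper uses, \cite[Lemma 2.5(1)]{CHL3}: the kernels of $\pi_1(M_{\csum})\to\pi_1(E)$, $\pi_1(M_{K_1})\to\pi_1(F_1)$, and $\pi_1(\ov{M_{K_2}})\to\pi_1(\ov{F_2})$ are normally generated by the longitudes of the infection knots $-K_2$, $J$, and $L$; these longitudes lie in the second derived subgroups, hence vanish in the rational Alexander modules, so van Kampen shows each attachment leaves $\pi_1^{(1)}/\pi_1^{(2)}\otimes\Q$ unchanged and $\mathcal{A}(V)\cong\mathcal{A}(W')\cong\mathcal{A}(W)$. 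With that substitution your argument closes; as written, the decisive step rests on the wrong lemma.
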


\begin{proof} 
Consider the following commutative diagram of Alexander modules.  

\begin{equation}\label{giant_alexander_diagram}
	\begin{diagram}\dgARROWLENGTH=1.0em
		\node{\mathcal{A}^{\mathbb{Z}}(\mathfrak{R}\#- \mathfrak{R})} \arrow{s,l}{i_1} \arrow{e,t}{\phi_*}
		\node{\mathcal{A}^{\mathbb{Z}}(V)} \arrow{s,l}{i_2} \arrow{e,t}{f_*}
		\node{\alex{W'}}\arrow{s,l}{i_3} \arrow{e,t}{g_*}
		\node{\mathcal{A}^{\mathbb{Z}}({W})} \arrow{s,l}{i_5} \arrow{e,t}{}
		\node{A} \arrow{sw,l}{i_6} \\
		\node{\mathcal{A}(\mathfrak{R}\#- \mathfrak{R})} \arrow{e,t}{\phi'_*}
		\node{\mathcal{A}(V)} \arrow{e,t}{f'_*}
		\node{\qalex{W'}}\arrow{e,t}{g'_*}
		\node{\mathcal{A}(W)}
	\end{diagram}
\end{equation}

\noindent The validity of this diagram is supported by the fact that $\alex{\csum} \cong \alex{\rcsum}$.  The horizontal maps are induced by inclusion.  Since $\alex{\rcsum}$ is $\mathbb{Z}$ torsion free, $i_1$ is injective.  By Definition \ref{def:Sseries}, $\pi_1(W)^{(2)}_{\mathcal{S}} \equiv \pi_1(W)^{(2)}_r$, and therefore $i_6: \pi_1(W)^{(1)}/\pi_1(W)^{(2)}_{\mathcal{S}} \rightarrow \mathcal{A}(W)$ is clearly injective.

The kernel of $\phi'_*$ is an isotropic submodule of $\mathcal{A}(\mathfrak{R}\#-\mathfrak{R})$ with respect to the Blanchfield form. Since the rational Alexander module of $\rcsum$ decomposes under connected sum, as does its Blanchfield form, $\ea$ must be mapped to a nontrivial element of $\mathcal{A}(V)$ as $\bl_\rib(\eta_1,\eta_1) \neq 0$.

It remains to show that the lower maps $f'_*, g'_*$ are injective; that is, the rational Alexander module of $V$ injects into that of $W$.  Since the connected sum operation may be described as an infection $\csum \equiv K_1(\mu_1,-K_2)$, the kernel of $f'_*:\pi_1(M_{\csum}) = \pi_1(\partial V) \rightarrow \pi_1(E)$ is normally generated by the longitude of $-K_2$ as a curve in $\pi_1(M_{K_1})$  \cite[Lemma 2.5(1)]{CHL3}.  The longitude lies in the second derived subgroup of $\pi_1(K_2)$ and also in the second derived subroup of $\pi_1(M_{\csum})$.  Since the rational Alexander module of a space, $X$, with $H_1(X) \cong \mathbb{Z}$ is given by $\mathcal{A}(X) \cong \pi_1(X)^{(1)} / \pi_1(X)^{(2)} \otimes_{\mathbb{Z}} \mathbb{Q}$, $f_*$ is an isomorphism between the rational Alexander modules of $V$ and $W'$. 

Similarly, to show $g'_*$ is injective, we note that this kernel is normally generated by the longitudes of $J$ and $L$ as curves in $M_{K_1}$ and $\ov{M_{K_2}}$ respectively.  These curves lie in $\pi_1(M_J)^{(2)}$ and $\pi_1(\ov{M_L})^{(2)}$, contained via inclusion in $\pi_1(M_{K_1})^{(3)}$ and $\pi_1(\ov{M_{K_2}})^{(3)}$ respectively, and $g_*'$ is an isomorphism.
\end{proof}

For the contradiction used in the proof of Theorem \ref{thm:distinct} we show that  $\mu_0 \sim \beta$ is nontrivial as an element of $\pi_1(Z)^{(2)}/\pi_1(Z)^{(3)}_{\mathcal{S}}$.  

\begin{lemma}\label{lemma:rhoJ}
The meridian of $J_0$, $\mu_0$, isotopic in $Z$ to $\beta$, is nontrivial as an element of
\[
\frac{\pi_1(Z)^{(2)}}{\pi_1(Z)^{(3)}_{\mathcal{S}}}
\]
Therefore, the restriction $\Phi: \pi_1(M_{J_0}) \rightarrow  \pi_1(Z)/\pi_1(Z)^{(3)}_{\mathcal{S}} = \Lambda$ factors nontrivially through $\mathbb{Z}$.
\end{lemma}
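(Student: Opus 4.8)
The plan is to show that $\beta$ (equivalently $\mu_0$) survives to a nontrivial element of $\pi_1(Z)^{(2)}/\pi_1(Z)^{(3)}_{\mathcal{S}}$ by interpreting this quotient as a module over the Laurent-type ring of the previous stage and detecting $\beta$ via the order of the element $\ea$ established in Lemma \ref{lemma:nontrivial}. First I would record that $\pi_1(Z)^{(2)}/\pi_1(Z)^{(3)}_{\mathcal{S}}$ is a module over $\mathbb{Z}[\pi_1(Z)/\pi_1(Z)^{(2)}_{\mathcal{S}}]$, localized by the divisor set $S$ coming from $\Delta_L$. The element $\beta$ is identified with the image of the meridian $\mu_0$ of $J_0$; because the infection $J\equiv R(\beta,J_0)$ glues $M_{J_0}$ along the curve $\beta$ which generates the rational Alexander module of $R$, the element $\beta$ acquires a well-defined order equal to $\Delta_R$ (up to the relevant change of variable). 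The key is that $\ea$, which by Lemma \ref{lemma:nontrivial} is nontrivial in $A=\pi_1(W)^{(1)}/\pi_1(W)^{(2)}_{\mathcal{S}}$, plays the role of the generator of the coefficient ring at the next level, so that the $\beta$-summand cannot be killed unless $\Delta_R$ becomes a unit after localization at $S$.

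The central step, then, is a coprimality computation. The localization at $\mathcal{S}$ inverts the set $S$ generated by $\{\Delta_L(\mu^i\eta_2'\mu^{-i})\}$, so $\beta$ dies in the quotient only if the order of $\beta$ — which is controlled by $\Delta_R$ evaluated at the appropriate power of $\ea$ — lies in the multiplicative set generated by $\Delta_L$ evaluated at powers of $\eb'$. I would translate this into the statement that $\Delta_R(t^m)$ and $\Delta_L(t^n)$ have a common root, precisely the situation excluded by hypothesis (\ref{cond2}) of Theorem \ref{thm:distinct} unless $n=\pm m$; and the case $n=\pm m$ is where the secondary Blanchfield condition $\bl(\ea,\ea)\neq\bl(\eb,\eb)$ must enter, since there the two curves $\ea,\eb'$ could be conjugate and the orders could otherwise collide. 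Concretely, I expect to argue that the distinct Blanchfield self-linkings force $\ea$ and $\eb'$ to be linearly independent (or at least non-proportional) in the relevant Alexander module, so that $\Delta_R(\ea)$ and $\Delta_L(\eb')$ generate coprime ideals after passing to the localized group ring, whence $\Delta_R(\ea)$ remains a non-unit and $\beta$ survives.

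The main obstacle will be making the relationship between the abstract localization $\pi_1(Z)^{(2)}/\pi_1(Z)^{(3)}_{\mathcal{S}}$ and the classical Alexander-module data precise enough to run the coprimality argument. In particular, one must show that the divisor set $S$, a priori defined using $\eb'$ inside $\pi_1(W)$, does not accidentally invert the order of $\beta$, and this requires carefully tracking how $\ea$ and $\eb'$ sit inside $\pi_1(Z)/\pi_1(Z)^{(2)}_{\mathcal{S}}$ as established by the diagram \eqref{giant_alexander_diagram}. The Blanchfield form enters here as the tool that separates $\ea$ from $\eb'$ at the module level: if $\bl(\ea,\ea)\neq\bl(\eb,\eb)$ then no automorphism of the Alexander module (in particular no conjugation in the quotient group) can carry one to the other, which is exactly what prevents $\Delta_R$ and the localizing set from sharing a factor. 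I would close by invoking the injectivity of $i_6$ and the lower maps from Lemma \ref{lemma:nontrivial} to conclude that $\beta$ is nonzero, and hence that $\Phi$ restricted to $\pi_1(M_{J_0})$, which is normally generated by $\mu_0\sim\beta$, factors nontrivially through $\mathbb{Z}$ via the abelianization, giving the stated conclusion.
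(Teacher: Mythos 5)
Your skeleton matches the paper's in outline: reduce to showing that $\beta\otimes 1$, which generates $H_1(M_J;\mathcal{R})\cong\mathcal{A}(J)\otimes\mathcal{R}\cong\bigl(\mathbb{Q}\Gamma/\Delta_R(\ea)\mathbb{Q}\Gamma\bigr)S^{-1}$ (where $\Gamma=\pi_1(W)/\pi_1(W)^{(2)}_{\mathcal{S}}$, $\mathcal{R}=\mathbb{Q}\Gamma S^{-1}$, and $t$ acts by $\ea$, using Lemma \ref{lemma:nontrivial}), survives to the next stage; then show $\Delta_R(\ea)$ is not $S$-torsion by a coprimality argument, with the case $n=\pm m$ handled by the Blanchfield hypothesis. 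But there is a genuine gap at the pivot of your argument. You assert that ``the $\beta$-summand cannot be killed unless $\Delta_R$ becomes a unit after localization at $S$,'' and you propose to justify this by ``invoking the injectivity of $i_6$ and the lower maps from Lemma \ref{lemma:nontrivial}.'' Those maps live one stage up: they concern $\mathcal{A}^{\mathbb{Z}}(\rcsum)$, $\mathcal{A}(V)$, and $\mathcal{A}(W)$ with $\mathbb{Q}[t,t^{-1}]$-coefficients, and they say nothing about the kernel of $j_*:H_1(M_J;\mathcal{R})\to H_1(W;\mathcal{R})$, which is precisely the map $\beta\otimes 1$ must survive. A priori $j_*$ could have a large kernel even when $H_1(M_J;\mathcal{R})\neq 0$. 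The paper closes this with a separate, higher-order Blanchfield argument: $W=V\cup E\cup F_1\cup\ov{F_2}$ is a \emph{$2$-bordism} (the pieces $E,F_1,F_2$ contribute no second homology beyond the boundary and $V$ is a $2$-solution), so by \cite[Theorem 7.15]{CHL5} the kernel $P=\ker j_*$ is isotropic with respect to the Blanchfield form on $H_1(M_J;\mathcal{R})$, which is nonsingular by \cite[Lemma 7.16]{CHL5}; since $\beta\otimes 1$ generates the module, $\beta\otimes1\in P$ would force $H_1(M_J;\mathcal{R})=0$. Only after this step does the computation $\bigl(\mathbb{Q}\Gamma/\Delta_R(\ea)\mathbb{Q}\Gamma\bigr)S^{-1}\neq 0$ finish the proof; without it your reduction is unjustified.

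A secondary inaccuracy concerns how the hypothesis $\bl(\ea,\ea)\neq\bl(\eb,\eb)$ enters in the case $n=\pm m$. You say it forces $\ea$ and $\eb'$ to be non-proportional, so that no conjugation can carry one to the other. Non-proportionality in $\mathcal{A}^{\mathbb{Z}}(\rcsum)$ is automatic and needs no hypothesis at all: $\tau_*$ preserves the connected-sum decomposition, and $\ea$, $\eb'$ lie in different summands. What must actually be ruled out is the relation $\mu^i\eb'\mu^{-i}=\ea^{\pm1}$ holding \emph{in $A$}, i.e.\ modulo the kernel of $\mathcal{A}^{\mathbb{Z}}(\rcsum)\to A$. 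That kernel is isotropic (this is where the diagram of Lemma \ref{lemma:nontrivial} is genuinely used), whereas
\[
	\bl_{\rcsum}\bigl(\tau_*^i(\eb')\mp\ea,\ \tau_*^i(\eb')\mp\ea\bigr)
	=\bl_\rib(\ea,\ea)-\bl_\rib(\eb,\eb)\neq 0,
\]
so the difference cannot lie in that kernel; the remaining possibility, namely the exact equation $\tau_*^i(\eb')=\ea^{\pm1}$ in $\mathcal{A}^{\mathbb{Z}}(\rcsum)$, is then the one eliminated by the decomposition argument. Your ``coprime ideals'' phrasing inverts this logic and, as stated, would not use the hypothesis in a way that makes the argument close.
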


\begin{proof}
Recall that the kernel of 

\[
	\pi_1(W)\rightarrow \pi_1(W \cup G) = \pi_1(Z)
\]

\noindent
is the normal closure in $\pi_1(W)$ of the kernel of $\pi_1(M_J) \rightarrow \pi_1(G)$.  This is normally generated by the longitude of the infecting knot $J_0$ considered as a curve in $S^3- J_0 \subset M_J \subset \partial W$ \cite[Lemma 2.5 (1)]{CHL3} which lies in $\pi_1(M_{J_0})^{(2)}$.  Inclusion induces
\begin{equation*}
\pi_1(M_{J_0})^{(2)} \subset \pi_1(M_J)^{(3)} \subset \pi_1(W)^{(3)} \subset \pi_1(W)^{(3)}_{\mathcal{S}}
\end{equation*}
as well as the following isomorphism:
\begin{equation*}
	\frac{\pi_1(W)}{\pi_1(W)^{(3)}_{\mathcal{S}}} \cong \frac{\pi_1(Z)}{\pi_1(Z)^{(3)}_{\mathcal{S}}} = \Lambda
\end{equation*}

\noindent
Therefore, it suffices to show $\beta$ is nontrivial ${\pi_1(W)}/{\pi_1(W)^{(3)}_{\mathcal{S}}}$.  Consider the following commutative diagram, where we set $\Gamma \equiv \pi_1(W)/\pi_1(W)^{(2)}_\mathcal{S}$ and $\mathcal{R} \equiv \mathbb{Q}\Gamma S^{-1}$.

\begin{equation}
	\begin{diagram}\label{diag:ea_in_pis}\dgARROWLENGTH=1.0em
		\node{\pi_1(M_J)^{(1)}} \arrow{s,l}{} \arrow[2]{e,t}{j_*}
		\node[2]{\pi_1(W)^{(2)}} \arrow{s,l}{} \arrow{e,t}{\Phi}
		\node{\frac{\pi_1(W)^{(2)}_{\mathcal{S}}}{\pi_1(W)^{(3)}_{\mathcal{S}}}} \arrow{s,l}{j} \\
		\node{\mathcal{A}(J)\otimes \mathcal{R}} \arrow{e,t}{\cong}
		\node{H_1(M_J;\mathcal{R})} \arrow{r,t}{j_*}
		\node{H_1(W; \mathcal{R})} \arrow{e,t}{\cong}
		\node{\frac{\pi_1(W)^{(2)}_{\mathcal{S}}}{[\pi_1(W)^{(2)}_{\mathcal{S}}, \pi_1(W)^{(2)}_{\mathcal{S}}]}\otimes \mathcal{R}}
	\end{diagram}
\end{equation}

\noindent
We will now justify certain maps of the diagram.  Here, the horizontan map $j_*$ is given by functoriality of the commutator series and inclusion which induces $\pi_1(M_J) \subset \pi_1(W)^{(1)}$.  Since $\pi_1(M_J)$ is normally generated by the meridian $\mu_1$ which is identified with $\ea$ in $W$ and $\ea$ is nontrivial in $A = \pi_1(W)^{(1)}/\pi_1(W)^{(2)}_{\mathcal{S}}$ by Lemma \ref{lemma:nontrivial}, the map
\[
	\pi_1(M_J) \rightarrow
	\frac{\pi_1(W)^{(1)}}{\pi_1(W)^{(2)}_{\mathcal{S}}} \hookrightarrow
	\frac{\pi_1(W)}{\pi_1(W)^{(2)}_{\mathcal{S}}} \equiv \Gamma
\]
must factor nontrivially through $\pi_1(M_J)/\pi_1(M_J)^{(1)} = \langle \mu_1 \rangle \cong\mathbb{Z}$.  It follows that 

\[
	H_1(M_J;\mathbb{Q}\Gamma) \cong H_1(M_J;\mathbb{Q}[t, t^{-1}])\otimes \mathbb{Q}\Gamma
	\equiv \mathcal{A}(J)\otimes_{\mathbb{Q}[t, t^{-1}]}\mathbb{Q}\Gamma
\]

\noindent
where $\mathbb{Q}[t, t^{-1}]$ acts on $\mathbb{Q}\Gamma$ by $t\mapsto \ea$.  Thus, $H_1(M_J;\mathcal{R}) \cong \mathcal{A}(J)\otimes \mathcal{R}$.  To justify the map 

\begin{equation}\label{eqn:h1wriso}
	H_1(W;\mathcal{R}) \xrightarrow{\cong} 
	\frac{\pi_1(W)^{(2)}_\mathcal{S}}{[\pi_1(W)^{(2)}_{\mathcal{S}}, \pi_1(W)^{(2)}_{\mathcal{S}}]}\otimes\mathcal{R},
\end{equation}

\noindent
note that we may interpret $H_1(W;\mathbb{Z}\Gamma)$ as the first homology of the $\Gamma$ covering space of $W$, so

\[
	H_1(W;\mathbb{Z}\Gamma)\xrightarrow{\cong}
	\frac{\pi_1(W)^{(2)}_\mathcal{S}}{[\pi_1(W)^{(2)}_{\mathcal{S}}, \pi_1(W)^{(2)}_{\mathcal{S}}]}.
\]

\noindent
Since $\mathcal{R}$ is a flat $\mathbb{Z}\Gamma$-module, equation (\ref{eqn:h1wriso}) is justified.  Moreover, by the definition of $\pi_1(W)^{(3)}_\mathcal{S}$ in Definition \ref{def:Sseries}, the vertical map $j$ is injective.  Recall that by hypothesis, $\beta$ generates the rational Alexander module of $R$, and hence $J$, which implies $\beta\otimes 1$ is the generator of $H_1(M_J;\mathcal{R}$.  Therefore, in order to finish the proof, it suffices to show that $\beta \otimes 1$ is not in the kernel of the bottom row of (\ref{diag:ea_in_pis}).

Note that $W$ is given by $V\cup E\cup F_1 \cup \ov{F_2}$ with $\partial W = M_\rib \sqcup M_J \sqcup \ov{M_\rib}\sqcup \ov{M_L}$.  Since $E, F_1, F_2$ have no second homology relative boundary, 
\begin{equation*}
	 \frac{H_2(W)}{i_*\left(H_2(\partial W)\right)} \cong H_2(V).
\end{equation*}
Furthermore, $V$ is a $2$-solution and therefore $H_2(W)/i_*\left(H_2(\partial W)\right)$ has a basis which satisfies conditions \ref{prop:basis} and \ref{prop:nthterm} of Definition \ref{def:nsolvable} though it fails condition \ref{prop:inclusion}.  Therefore, $W$ is called a \textit{$2$-bordism} for $\partial W$ \cite[Definition 7.11]{CHL5}.

Suppose $P \equiv \ker \{j_*:H_1(M_J;{\mathcal{R}})\rightarrow H_1(W;\mathcal{R})\}$. Then, since $W$ is a $2$-bordism, by \cite[Theorem 7.15]{CHL5}, $P$ must be an isotropic submodule of $H_1(M_J;{\mathcal{R}})$ with respect to the Blanchfield form $H_1(\partial W;\mathcal{R})$ and thus on $H_1(M_J;\mathcal{R})$.  However, we have already shown that $\beta\otimes 1$ is a generator of $H_1(M_J;{\mathcal{R}})$, and if $\beta\otimes 1 \in P \equiv \ker j_*$, then $\bl^{\mathcal{R}}_J(\beta\otimes 1, \beta \otimes 1) =0$.  Since $\bl^{\mathcal{R}}$ is nonsingular \cite[Lemma 7.16]{CHL5}, this means $H_1(M_J;\mathcal{R}) \equiv 0$.  In order to give a contradiction, we show

\[
	\mathcal{A}(J)\otimes \mathcal{R} \cong 
	\left( \frac{\mathbb{Q}\Gamma}{\Delta_R(\ea)\mathbb{Q}\Gamma}\right) S^{-1} \neq 0.
\]

 By hypothesis of Theorem \ref{thm:distinct}, the rational Alexander module of $R$ is nontrivial, and $\Delta_R(t)$ is not a unit in $\mathbb{Q}[t, t^{-1}]$.  The map $\mathbb{Z}\rightarrow \Gamma$ given by $t \mapsto \ea$ is nontrivial, since we showed in Lemma \ref{lemma:nontrivial} that $\ea \neq 0$ in $\pi_1(W)^{(1)}/\pi_1(W)^{(2)}_{\mathcal{S}}$.  Since $\Gamma$ is torsion-free, $\mathbb{Q}\Gamma$ is a free left $\mathbb{Q}[\ea,\ea^{-1}]$-module on the right cosets of $\langle \ea \rangle \subset \Gamma$, where $\langle\ea\rangle$ denotes the submodule of $\mathbb{Q}\Gamma$ generated by $\ea$.  We may then fix a set of coset representatives so that any $x \in \mathbb{Q}\Gamma$ has a unique decomposition 

\[
	x = \sum_{\xi} x_{\xi}\xi
\]

\noindent
where each $x_{\xi} \in \mathbb{Q}[\ea,\ea^{-1}]$ and each $\xi$ is a coset representative in $\Gamma$.  Notice that if $\Delta_R(\ea)x=1$ then 

\[
	\Delta_R(\ea)x = \Delta_R(\ea) \sum_{\xi}x_{\xi}\xi = \sum_{\xi}\Delta_R(\ea)x_{\xi}\xi = 1.
\]

\noindent
This implies that on the coset $\xi = e$, we have $\Delta_R(\ea)x_e = 1$ in $\mathbb{Q}[\ea, \ea^{-1}]$, contradicting the fact that $\Delta_R(t)$ is not a unit in $\mathbb{Q}[t, t^{-1}]$.  Therefore, $\Delta_R(\ea)$ has no right inverse in $\mathbb{Q}\Gamma$.  Since $\Gamma$ is poly-torsion-free abelian, $\mathbb{Q}\Gamma$ is a domain \cite{Str} and 
\[
	\frac{\mathbb{Q}\Gamma}{\Delta_R(\ea)\mathbb{Q}\Gamma} \ncong 0.
\]

\noindent
Next, we consider the localization of this module at $S$. The kernel of 

\[
	\frac{\mathbb{Q}\Gamma}{\Delta_R(\ea)\mathbb{Q}\Gamma}\rightarrow
	\frac{\mathbb{Q}\Gamma}{\Delta_R(\ea)\mathbb{Q}\Gamma} S^{-1}
\]

\noindent
is the $S$-torsion submodule \cite[Cor 3.3, p 57]{Ste}.  So to establish the desired result, it suffices to show that the generator of $\mathbb{Q}\Gamma/\Delta_R(\ea)\mathbb{Q}\Gamma$ is not $S$-torsion.  If this generator, which we denote by $1$, is $S$-torsion, then $1s = \Delta_R(\ea)y$ for some $s\in S$ and $y\in \mathbb{Q}\Gamma$.

Remember that $\Gamma \equiv \pi_1(W)/\pi_1(W)^{(2)}_{\mathcal{S}}$ and $A \equiv \pi_1(W)^{(1)}/\pi_1(W)^{(2)}_{\mathcal{S}} \lhd \Gamma$.  Since $A \subset \Gamma$, we may view $\mathbb{Q}\Gamma$ as a free left $\mathbb{Q}A$-module on the set of right cosets of $A$ in $\Gamma$.  So any $y \in \mathbb{Q}\Gamma$ has a unique decomposition 

\[
	y = \sum_{\xi} y_{\xi}\xi,
\]

\noindent
where the sum is over a set of coset representatives $\{\xi \in \Gamma\}$ and $y_\xi$ is an element of $\mathbb{Q}A$.  Then

\[
\begin{split}
	s = &\Delta_R(\ea) y\\
	   = &\Delta_R(\ea) \sum_{\xi}y_{\xi}\xi.
\end{split}
\]

\noindent
Since $s \in S \subset \mathbb{Q}A$ and $\Delta_R(\ea)\in \Q A$, it must be that each coset representative $\xi \neq e$ yields $0=\Delta_R(\ea)y_{\xi}$.  Note that $\mathbb{Q}[\ea,\ea^{-1}] \subset \mathbb{Q}\Gamma$ and hence $\Delta_R(\ea)\neq 0$.  Since $\Q A\subset \Q \Gamma$ is a domain, it must be that $y_\xi=0$ for all $\xi\neq e$.  Therefore $y \in \mathbb{Q}A$ and $s = \Delta_R(\ea)y$ is an equation in $\mathbb{Q}A$.  Because of Definition \ref{def:Sseries}, each element of $S$ can be written as the product of terms of the form $\Delta_L(\mu^i \eb' \mu^{-i})$.

Moreover, since $A$ is a torsion-free abelian group, we may view $s = \Delta_R(\ea)y$ as an equation in $\mathbb{Q}F$ for some free abelian group $F \subset A$ of finite rank $r$.  Since $\mathbb{Q}F$ is a UFD, we apply the following proposition.

\begin{proposition}\cite[Proposition 4.5]{CHL5}\label{prop:coprimes} Suppose $\Delta_R(t), \Delta_L(t) \in \mathbb{Q}[t, t^{-1}]$ are non zero.  Then $\Delta_R$ and $\Delta_l$ are strongly coprime if and only if, for any finitely generated free abelian group $F$ and any nontrivial $a,b \in F$, $\Delta_R(a)$ is relatively prime to $\Delta_L(b)$ in $\mathbb{Q}F$.
\end{proposition}

Recall if $s = \Delta_R(\ea)y$ is an equation in $S$, $\Delta_R(\ea)$ must divide a product of terms of the form $\Delta_L(\mu^i \eb \mu^{-i})$.  If $\Delta_R, \Delta_L$ are strongly coprime, we already arrive at a contradiction, since Proposition \ref{prop:coprimes} implies $\Delta_R(\ea)$ is relatively prime to $\Delta_L(\mu^i \eb' \mu^{-i})$ for any $i$.   Otherwise, choose some basis $\{ x_1, x_2, \dots, x_r\}$ for $F$ such that $\ea = x_1^m$ for some positive $m\in \mathbb{Z}$.  Then $\mu^i \eb' \mu^{-i} = x_1^{n_{i,1}}x_2^{n_{i,2}}\cdots x_r^{n_{i,r}}$, and we may view $\mathbb{Q}F$ as a Laurent Polynomial ring in the variables $\{x_1,x_2,\dots,x_r\}$.  Since $\Delta_R\neq 0$ and is not a unit, there exists some nonzero complex root, $\zeta$, of $\Delta_R(x_1^m)$.  Suppose that $\tilde{p}(x_1)$ is a nonzero irreducible factor of $\Delta_R(x_1^m)$ of which $\zeta$ is a root.  Then for some $i$, $\tilde{p}(x_1)$ divides $\Delta_L(x_1^{n_{i,1}}x_2^{n_{i,2}}\cdots x_r^{n_{i,r}})$ and so $\zeta$ must be a zero of $\Delta_L(x_1^{n_{i,1}}x_2^{n_{i,2}}\cdots x_r^{n_{i,r}})$ for every complex value of $x_2, \dots, x_r$ which is impossible unless $n_{i,j} = 0$ for each $j>1$.  Therefore, $\mu^i \eb' \mu^{-i} = x_1^{n_i}$ for some $n_i \neq 0$.  Recall that $\Delta_R(t^m)$ and $\Delta_L(t^n)$ share no common roots unless $n=\pm m$.  Thus $n_i = \pm m$ and $\mu^i\eb'\mu^{-i} = (\ea)^{\pm 1}$ for some $i$. 

This equation holds in $A$ but each of $\ea,\eb',$ and $\mu$ are given by circles in $M_{\rcsum}$ where $\mu^i\eb'\mu^{-1}$ and $\ea$ represent elements of $\mathcal{A}^\Z(\rcsum)$.  Therefore, the validity of the equation $\mu^i\eb'\mu^{-i} = (\ea)^{\pm 1}$ may be considered in $\mathcal{A}^{\mathbb{Z}}(\rcsum)$ as long as $(\mu^i \eb' \mu^{-i})  \ea^{\mp 1}$ does not lie in the kernel of 
\[
	\mathcal{A}^{\mathbb{Z}}(\rib\#-\rib) \rightarrow
	\mathcal{A}^{\mathbb{Z}}(W) \rightarrow
	\frac{\pi_1(W)^{(1)}}{\pi_1(W)^{(2)}_{\mathcal{S}}} \equiv A
\]
Notice however, that in the module notation for $\mathcal{A}^{\mathbb{Z}}(\rcsum)$, $(\mu^i \eb' \mu^{-i}) \ea^{\mp 1} = \tau_*^i(\eb')\pm\ea$, and we consult the Blanchfield form:
\begin{equation*}
\begin{array}{rl}
	\bl_{\rcsum}(\tau_*^i(\eb') \pm\ea, \tau_*^i(\eb') \pm\ea) =& \bl_{-\rib}(\tau_*^i(\eb'),\tau_*^i(\eb))\pm \bl_{\rib}(\ea,\ea) \\
	=& \bl_{-\rib}(\eb',\eb')+\bl_\rib(\ea,\ea) \\
	=& -\bl_\rib(\eb,\eb)+\bl_\rib(\ea,\ea) \\
	\neq & 0.
\end{array}
\end{equation*}
The last inequality holds since the requirement imposed upon $\ea,\eb$ was that $\bl(\ea,\ea) \neq \bl(\eb,\eb)$.  Therefore, if the equality $\mu^i\eb'\mu^{-i} = (\ea)^{\pm 1}$ holds in $A$, it must hold in $\mathcal{A}^{\mathbb{Z}}(\rib\#-\rib)$ where it is written as $\tau_*^i(\eb') = \ea^{\pm 1}$.  Let $U$ and $U'$ be Seifert matrices for $\rib$ and $-\rib$ respecitvely. We remark that although $U'=-U$,  this distinction is made to emphasize the different contributions from the respective basis elements coming from the Seifert surfaces of $\rib$ and $-\rib$.  A presentation matrix for the Alexander module $\mathcal{A}^{\mathbb{Z}}(\rcsum)$ is given by
\begin{equation*}
	\left(\begin{array}{cc}
		U-\tau_*U^{\intercal} & 0 \\
		0 & U' - \tau_*U'^{\intercal}  \end{array}
	\right)
\end{equation*}
The automorphism $\tau_*$ decomposes under connected sum $\rcsum$.  Thus $\tau_*(\alex{\mathcal{R}}\oplus 0) \subset \alex{\mathcal{R}} \oplus 0$ and $\tau_*(0\oplus \alex{\mathcal{-R}}) \subset 0 \oplus \alex{\mathcal{-R}}$, and invalidates the equation $ \tau_*^i(\eb') = \ea^{\pm 1}$.  This contradicts the equality of the statement $\mu^i\eb'\mu^{-i}=\ea^{\pm 1}$ in $A$ and, therefore, contradicts the assumption of $\Delta_R(\ea)$ being $S$ torsion.  Thus, $\mathcal{A}(J)\otimes \mathcal{R}$ is nontrivial and $\beta\otimes 1$ cannot lie in the kernel of the bottom row of \ref{diag:ea_in_pis}.  This completes the proof that $\mu_0 \sim \beta$ is nontrivial in $\pi_1(Z)^{(2)}_\S/\pi_1(Z)^{(3)}_\S$ so the restriction of $\Phi$ to $\pi_1(M_{J_0})$ factors nontrivially through $\Z$.
\end{proof}

Our last task is to show that $\rho(M_L,\Phi)=0$, completed in the following short lemma.

\begin{lemma}\label{lemma:rhoL}
The restriction of $\Phi$ to $\pi_1(\ov{M_L})$ also factors through $\Z$ and $\rho(M_L,\Phi) = 0$.
\end{lemma}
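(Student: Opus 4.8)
The plan is to show that $\Phi$ annihilates the commutator subgroup $\pi_1(M_L)^{(1)}$, so that its restriction to $\pi_1(M_L)$ factors through the abelianization $\pi_1(M_L)/\pi_1(M_L)^{(1)}\cong H_1(M_L;\Z)\cong\Z$; the vanishing of the $\rho$-invariant then follows from Proposition \ref{prop:rho} together with the algebraic sliceness of $L$. The point is that the divisor set $S$ of Definition \ref{def:Sseries} was engineered precisely so that localizing at $S$ kills the Alexander module of $L$, which is the mirror image of the non-vanishing computation carried out for $R$ in Lemma \ref{lemma:rhoJ}.

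First I would locate $\pi_1(M_L)$ in the derived series of $\pi_1(W)$. Under the infection forming $K_2\equiv\rib(\eb,L)$ the meridian $\mu_L$ of $L$ is identified with the infecting curve, which appears in $W$ as $\eb'$; since $\eb$ is null-homologous in $M_\rib$ we have $\eb'\in\pi_1(M_\rib)^{(1)}\subset\pi_1(W)^{(1)}$. As $\pi_1(M_L)$ is normally generated by $\mu_L$, its image lies in $\pi_1(W)^{(1)}$ and hence $\pi_1(M_L)^{(1)}$ maps into $\pi_1(W)^{(2)}\subset\pi_1(W)^{(2)}_{\mathcal{S}}$. Reusing the identification established in the proof of Lemma \ref{lemma:rhoJ}, with $\Gamma\equiv\pi_1(W)/\pi_1(W)^{(2)}_{\mathcal{S}}$ and $\mathcal{R}\equiv\mathbb{Q}\Gamma S^{-1}$, I would then recognize the image of $\pi_1(M_L)^{(1)}$ in $\pi_1(W)^{(2)}_{\mathcal{S}}/\pi_1(W)^{(3)}_{\mathcal{S}}\cong H_1(W;\mathcal{R})$ as the image of the induced map $H_1(M_L;\mathcal{R})\to H_1(W;\mathcal{R})$, where $H_1(M_L;\mathcal{R})\cong\qalex{L}\otimes_{\mathbb{Q}[t,t^{-1}]}\mathcal{R}$ with $t$ acting by $\mu_L=\eb'$.

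The key step, and the main obstacle, is to prove $H_1(M_L;\mathcal{R})=0$. The rational Alexander module $\qalex{L}$ is annihilated by $\Delta_L(t)$, and under $t\mapsto\eb'$ this becomes $\Delta_L(\eb')=\Delta_L(\mu^0\eta_2'\mu^{0})$, which by Definition \ref{def:Sseries} is a generator of $S$ and therefore a unit in $\mathcal{R}$; hence $\qalex{L}\otimes\mathcal{R}=0$. (If $\eb'$ happens to be trivial in $\Gamma$ the variable specializes to $t\mapsto1$ and the module already vanishes because $\Delta_L(1)=\pm1$ is a unit, so no localization is needed.) Consequently the image of $\pi_1(M_L)^{(1)}$ in $H_1(W;\mathcal{R})$ is trivial, so $\pi_1(M_L)^{(1)}\subset\pi_1(W)^{(3)}_{\mathcal{S}}$ and $\Phi(\pi_1(M_L)^{(1)})=1$. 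Thus $\Phi|_{\pi_1(M_L)}$ factors through $\Z$, as claimed.

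Finally, with $\Phi|_{\pi_1(M_L)}$ factoring through the abelianization $\Z\hookrightarrow\Lambda$, properties (\ref{prop:rhofactors}) and (\ref{prop:rhonaut}) of Proposition \ref{prop:rho} give $\rho(M_L,\Phi)=\rho_0(L)$, while property (\ref{prop:rhotrivcomp}) covers the degenerate case in which the induced map to $\Z$ is trivial and yields $\rho(M_L,\Phi)=0$ directly. Since $L$ is $1$-solvable it is algebraically slice, so its Seifert form is metabolic and its Levine--Tristram signatures vanish almost everywhere; therefore $\rho_0(L)=\int\sigma_L=0$. In either case $\rho(M_L,\Phi)=0$, which completes the proof.
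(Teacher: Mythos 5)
Your main line of argument is exactly the paper's: locate $\pi_1(\ov{M_L})^{(1)}$ in $\pi_1(W)^{(2)}\subset\pi_1(W)^{(2)}_{\mathcal{S}}$, identify $H_1(\ov{M_L};\mathcal{R})\cong \mathcal{A}(L)\otimes_{\mathbb{Q}[t,t^{-1}]}\mathcal{R}$ with $t\mapsto\eta_2'$, kill this module because $\Delta_L(\eta_2')\in S$ becomes a unit in $\mathcal{R}$, conclude that $\Phi|_{\pi_1(\ov{M_L})}$ factors through $\mathbb{Z}$, and finish with $\rho_0(L)=0$ for the $1$-solvable (hence algebraically slice) knot $L$. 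One cosmetic correction: $\pi_1(W)^{(2)}_{\mathcal{S}}/\pi_1(W)^{(3)}_{\mathcal{S}}$ is not isomorphic to $H_1(W;\mathcal{R})$; by the definition of $\pi_1(W)^{(3)}_{\mathcal{S}}$ it only \emph{injects} into $\bigl(\pi_1(W)^{(2)}_{\mathcal{S}}/[\pi_1(W)^{(2)}_{\mathcal{S}},\pi_1(W)^{(2)}_{\mathcal{S}}]\bigr)\otimes\mathcal{R}\cong H_1(W;\mathcal{R})$, but injectivity is all the diagram chase needs, and this is how the paper's Diagram (\ref{diag:L}) is set up.

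The genuine gap is your parenthetical handling of the degenerate case $\eta_2'=e$ in $\Gamma$. The identification $H_1(\ov{M_L};\mathcal{R})\cong\mathcal{A}(L)\otimes_{\mathbb{Q}[t,t^{-1}]}\mathcal{R}$ is a base-change statement that requires $\mathcal{R}$ to be flat over $\mathbb{Q}[t,t^{-1}]$. This holds in the main case because $\Gamma$ is torsion-free, so a nontrivial $\eta_2'$ has infinite order and $\mathbb{Q}\Gamma$ is free over $\mathbb{Q}[\eta_2',(\eta_2')^{-1}]$; it \emph{fails} when $\eta_2'=e$. In that case the coefficient system on $\ov{M_L}$ is trivial (since $\pi_1(\ov{M_L})$ is normally generated by $\mu_L\simeq\eta_2'$), and $H_1(\ov{M_L};\mathcal{R})\cong H_1(\ov{M_L};\mathbb{Q})\otimes_{\mathbb{Q}}\mathcal{R}\cong\mathcal{R}\neq 0$: specializing $t\mapsto 1$ in the Alexander module does not compute the twisted homology, so the step ``the module already vanishes because $\Delta_L(1)=\pm1$'' is false as stated. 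The conclusion still holds in this case, but for a different reason, which is how the paper argues: $\eta_2'=e$ in $\Gamma$ means $\eta_2'\in\pi_1(W)^{(2)}_{\mathcal{S}}$, and since $\pi_1(W)^{(2)}_{\mathcal{S}}$ is normal in $\pi_1(W)$ while $\pi_1(\ov{M_L})$ is normally generated by $\mu_L\simeq\eta_2'$, the entire image of $\pi_1(\ov{M_L})$ lies in $\pi_1(W)^{(2)}_{\mathcal{S}}$; hence $\pi_1(\ov{M_L})^{(1)}$ maps into $[\pi_1(W)^{(2)}_{\mathcal{S}},\pi_1(W)^{(2)}_{\mathcal{S}}]\subset\pi_1(W)^{(3)}_{\mathcal{S}}$ and $\Phi|_{\pi_1(\ov{M_L})}$ again factors through $\mathbb{Z}$ (possibly trivially, which is the case covered by property (\ref{prop:rhotrivcomp}) of Proposition \ref{prop:rho}). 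Since nothing in the paper rules out $\eta_2'$ being trivial in $\Gamma$ --- Lemma \ref{lemma:nontrivial} is proved only for $\eta_1$ --- this case cannot be waved off, and your proof needs this repair.
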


\begin{proof}
Similar to the beginning of Lemma \ref{lemma:rhoJ}, we begin with the following commutative diagram. 

\begin{equation}
	\begin{diagram}\label{diag:L}\dgARROWLENGTH=1.0em
		\node{\pi_1(\ov{M_L})^{(1)}} \arrow{s,l}{} \arrow[2]{e,t}{j_*}
		\node[2]{\pi_1(W)^{(2)}} \arrow{s,l}{} \arrow{e,t}{\Phi}
		\node{\frac{\pi_1(W)^{(2)}_{\mathcal{S}}}{\pi_1(W)^{(3)}_{\mathcal{S}}}} \arrow{s,l}{j} \\
		\node{\mathcal{A}(\ov{L})\otimes \mathcal{R}} \arrow{e,t}{\cong}
		\node{H_1(\ov{M_L};\mathcal{R})} \arrow{r,t}{j_*}
		\node{H_1(W; \mathcal{R})} \arrow{e,t}{\cong}
		\node{\frac{\pi_1(W)^{(2)}_{\mathcal{S}}}{[\pi_1(W)^{(2)}_{\mathcal{S}}, \pi_1(W)^{(2)}_{\mathcal{S}}]}\otimes \mathcal{R}}
	\end{diagram}
\end{equation}

Again, $j_*$ is given by functoriality of the comutator series and inclusion given that $\pi_1(\ov{M_L}) \subset \pi_1(W)^{(1)}$.  $\pi_1(\ov{M_L})$ is normally generated by its meridian, $\mu_{\ov{L}}$, which is identified with $\eb'$.   Suppose that $\eb'$ is nontrivial in $\pi_1(W)^{(1)}/\pi_1(W)^{(2)}_{\mathcal{S}}$.

\begin{equation*}
	\pi_1(\ov{M_L}) \rightarrow \frac{\pi_1(W)^{(1)}}{\pi_1(W)^{(2)}_{\mathcal{S}}} \hookrightarrow \frac{\pi_1(W)}{\pi_1(W)^{(2)}_{\mathcal{S}}}.
\end{equation*}

\noindent
This map must factor through $\pi_1(\ov{M_L})/\pi_1(\ov{M_L})^{(1)} = \langle \mu_L \rangle \cong \mathbb{Z}$, and

\begin{equation*}
	H_1(\ov{M_L};\mathbb{Q}\Gamma) \cong H_1(\ov{M_L};\mathbb{Q}[t,t^{-1}])\otimes \mathbb{Q}\Gamma \equiv \mathcal{A}(L)\otimes_{\mathbb{Q}[t, t^{-1}]}\mathbb{Q}\Gamma.
\end{equation*}

\noindent
where $\mathbb{Q}[t,t^{-1}]$ acts on $\mathbb{Q}\Gamma$ by $t \mapsto \mu_L \simeq \eb'$.  Therefore, $H_1(M_L;\mathcal{R})\cong\mathcal{A}(L)\otimes\mathcal{R}$.  Since the rational Alexander module of $L$ is $\Delta_L(t)$-torsion and $\Delta_L(\eb') \in S$ by definition,  this module is trivial.  This implies that the map along the top row of Diagram \ref{diag:L} is zero.  

Conversely, suppose $\eb'$ is trivial in $\pi_1(W)^{(1)}/\pi_1(W)^{(2)}_{\mathcal{S}}$.  Since $\pi_1(M_L)$ is normally generated by $\mu_L \simeq \eb'$ in $Z$, this implies $j_*(\pi_1(M_L))\subset \pi_1(W)^{(2)}_{\mathcal{S}}$ by inclusion and the map along the top row of the diagram is again zero.

Finally, consider the restriction of $\Phi$ to $\pi_1(\ov{M_L})$.

\begin{equation*}
	\Phi: \pi_1(\ov{M_L}) \rightarrow \frac{\pi_1(W)}{\pi_1(W)^{(3)}_{\mathcal{S}}}
\end{equation*}

\noindent
By the above arguments, this map is trivial on the subgroup $\pi_1(\ov{M_L})^{(1)} \subset \pi_1(\ov{M_L})$ and must therefore factor through $\pi_1(\ov{M_L})/\pi_1(\ov{M_L})^{(1)} \cong \mathbb{Z}$.  There are two easy cases to consider.  If the map is trivial, we have $\rho(M_L;\Phi) = 0$.  Otherwise, the map factors nontrivially through $\mathbb{Z}$ and $\rho(M_L;\Phi) = \rho^0(L)=0$ since $L$ is a 1-solvable knot.  This finishes the proof of the above lemma and completes the proof of Theorem \ref{thm:distinct}.

\end{proof}


\section{Example: $\mathfrak{R} = 9_{46}$}\label{sec:application1}
In this section, we give an explicit example of Corollary \ref{cor:intense} where we take $\mathfrak{R} = 9_{46}$ so that $\Delta_{\mathfrak{R}}(t) = -2t^2+5t-2$.  The infecting curves $a,b$, as shown in Figure \ref{fig:946_eta}, generate the integral Alexander module of $\mathfrak{R}$, and $\eta = a+b$ generates the rational Alexander module.  In $\mathcal{A}^\Z(\rib)$, we have the relations:
\begin{align}
	2ta = a \Rightarrow (2t-1)a = 0, \label{eqn:ainmod} \\
	tb = 2b \Rightarrow (t-2)b = 0. \label{eqn:binmod}
\end{align}
Any element, $\gamma$, of the integral Alexander module may be written as a polynomial combination of $a, b$, that is $\gamma = x(t)a+y(t)b \in \mathcal{A}^{\mathbb{Z}}$, where $x(t),y(t) \in \mathbb{Z}[t,t^{-1}]$.  Let $\mathcal{Q}$ denote the subring $\mathbb{Z}[2^{-1}] \subset \mathbb{Q}$.  Consider the map
\[
	\mathcal{A}^{\mathbb{Z}}(\rib) \rightarrow \mathcal{A}^{\mathbb{Z}}(\rib)\otimes_{\mathbb{Z}}\mathcal{Q}.
\]
Because of identities \ref{eqn:ainmod} and \ref{eqn:binmod},  
\[
	t^ra \mapsto 2^{-r}a, \qquad t^rb\mapsto 2^rb.
\]
Therefore, 
\[
	x(t)a \mapsto x(2^{-1})a \qquad y(t)b \mapsto y(2)b
\]
and $\gamma \mapsto x(2^{-1})a+y(2)b$, where $x(2^{-1}), y(2)  \in \mathcal{Q}\subset\mathbb{Q}$.  These equations hold as we map to the rational Alexander module.
\[
	\mathcal{A}^{\mathbb{Z}}(\rib) \rightarrow \mathcal{A}^{\mathbb{Z}}(\rib)\otimes_{\mathbb{Z}}\mathcal{Q} \rightarrow \mathcal{A}^{\mathbb{Z}}(\rib)\otimes_\mathbb{Z}\mathbb{Q} \equiv \mathcal{A}(\rib)
\]

\begin{figure}[h]
\includegraphics[scale=.3]{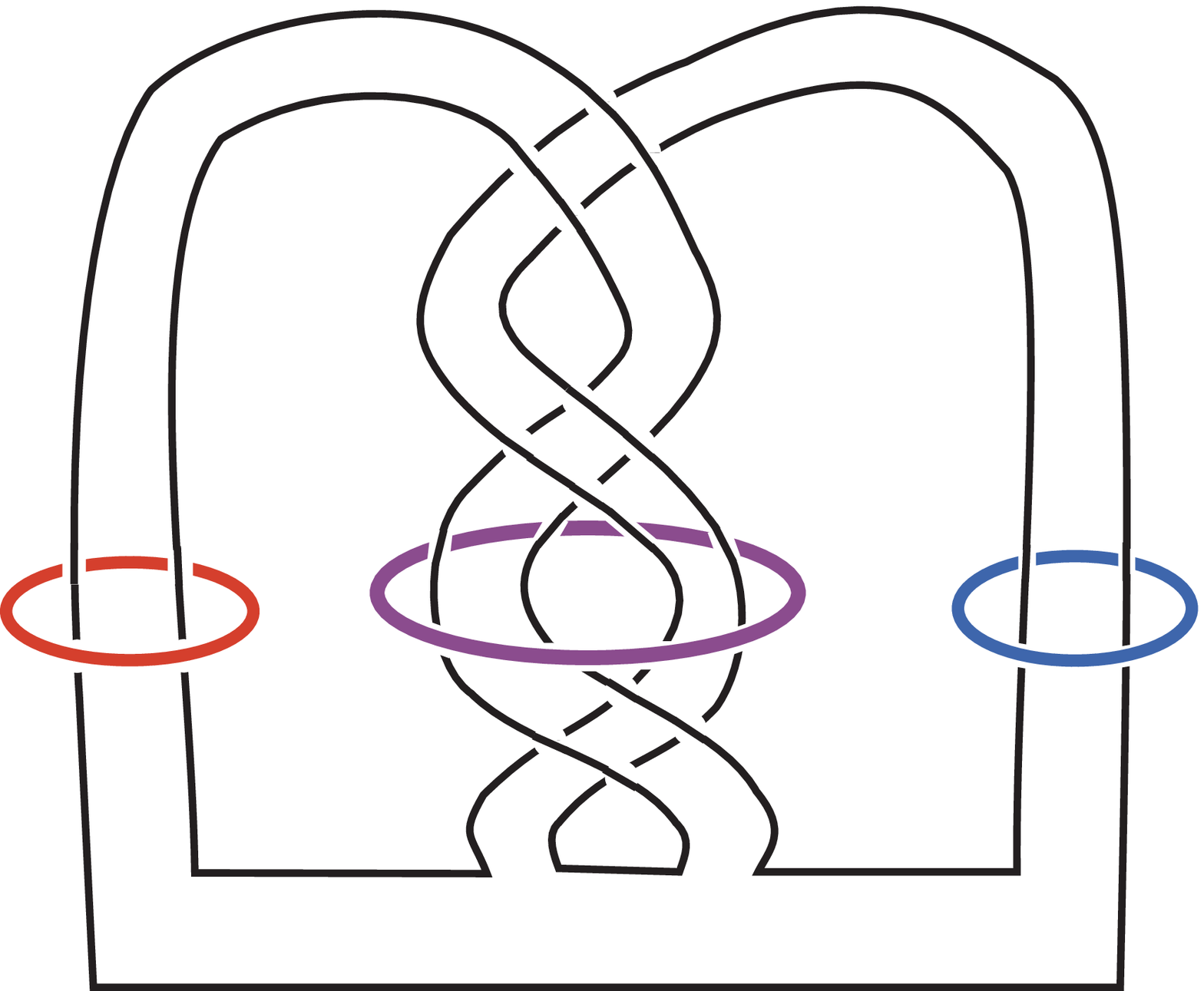}
\put(-155,45){\color{red}$a$}\put(3,45){\color{blue}$b$}\put(-50,55){\color{purple}$\eta$}\qquad
\caption{$\rib=9_{46}$,  $\eta=a+b$}\label{fig:946_eta}
\end{figure}

Suppose we fix $\eta = a+b$.  Let $K_1 = \mathfrak{R}(\eta; J)$ where $J$ is built as in the statement of Corollary \ref{cor:distinct}.  Suppose $\gamma = x(t)a+y(t)b \in \mathcal{A}^{\mathbb{Z}}(\rib)$, and let $K_2 = \rib(\gamma; J)$.  The rational Blanchfield self-linking of $\gamma$ is given by
\begin{equation}\label{eqn:946calcs}
\begin{array}{rl}
	\bl_\rib(\gamma,\gamma) =& \bl_\rib\big(\big(x(t)a+y(t)b\big),\big(x(t)a+y(t)b\big)\big)\\
		=&  \bl_\rib\left(\left(x\left(2^{-1}\right)a+y\left(2\right)b\right),\left(x\left(2^{-1}\right)a+y\left(2\right)b\right) \right)\\
		=& \left[x\left(2^{-1}\right)^2\bl_\rib(a,a)\right] + \left[x\left(2^{-1}\right)y\left(2\right)\bl_\rib(a,b)\right]\\
		 & +\left[x\left(2^{-1}\right)y(2)\bl_\rib(b,a)\right]+\left[y\left(2\right)^2\bl_\rib(b,b)\right] \\
		=& x\left(2^{-1}\right)y\left(2\right)\big(\bl_\rib(a,b)+\bl_\rib(b,a)\big) \\
		=& x\left(2^{-1}\right)y(2)\bl_\rib(\eta,\eta)
\end{array}
\end{equation}
where $\bl_\rib(a,a)=\bl_\rib(b,b)=0$ since $a$ and $b$ both generate isotropic submodules of $\mathcal{A}^\mathbb{Z}(\rib)$.  Corollary \ref{cor:distinct} states that $K_1$ and $K_2$ are distinct up to concordance as long as $\bl_\rib(\eta,\eta)\neq\bl_\rib(\gamma,\gamma)$ which from \ref{eqn:946calcs} is equivalent to $\left(1- x(2^{-1})y(2)\right)\bl_\rib(\eta,\eta)\neq 0$.  A formula for the Blanchfield form can be given by a Seifert matrix $U$ for $\rib$:
\begin{equation*}
	\bl(r,s) = \ov{s}(1-t)\left( tU-U^\intercal \right)^{-1}r
\end{equation*}
where $\ov{s}$ is the image of $s$ under the involution $t\mapsto t^{-1}$.  The Seifert matrix for $\rib$ yielding a presentation matrix for $\mathcal{A}(\rib)$ with respect to the basis $\{a,b\}$ is
\begin{equation*}
	\left(\begin{array}{cc}
		0	&	-1 \\
		-2	&	0 \\
	\end{array}\right),
\end{equation*}
and by a simple calculation,
\[
	\bl(\eta,\eta) = \frac{3(t-1)^2}{\Delta_{\rib}(t)}, \text{ where } (3(t-1)^2, \Delta_\rib(t))=1.
\]
This implies $(1-x(2^{-1})y(2))\bl_\rib(\eta,\eta)$ is zero if and only if $1-x(2^{-1})y(2)$ is a multiple of $\Delta_{\rib}(t)$.  This is only possible if $x(2^{-1})$ and $y(2)$ are inverses in ${\mathcal{Q}}\subset \mathbb{Q}$, and it must be that $x(2^{-1})=\pm2^{-r}$, $y(2) = \pm2^{r}$ with the same sign.  Therefore, $x(t)a$ and $y(t)b$ are equivalent in $\mathcal{A}^{\mathbb{Z}}(\rib)$ to $\pm t^{r}a$ and $\pm t^{r}b$ respectively and with the same sign.  Therefore, $x(t)a+y(t)b \equiv \pm ( t^ra + t^rb )= \pm t^r\eta$.  Since $\pm t^r \eta$ is represented by the infecting curve $\pm \eta$ in $S^3-\rib$, regardless of $r$, we see that infection upon $\eta$ and $\gamma$ may yield concordant knots only if $\gamma = \pm \eta$.

\begin{figure}[h]
	\includegraphics[scale=.25]{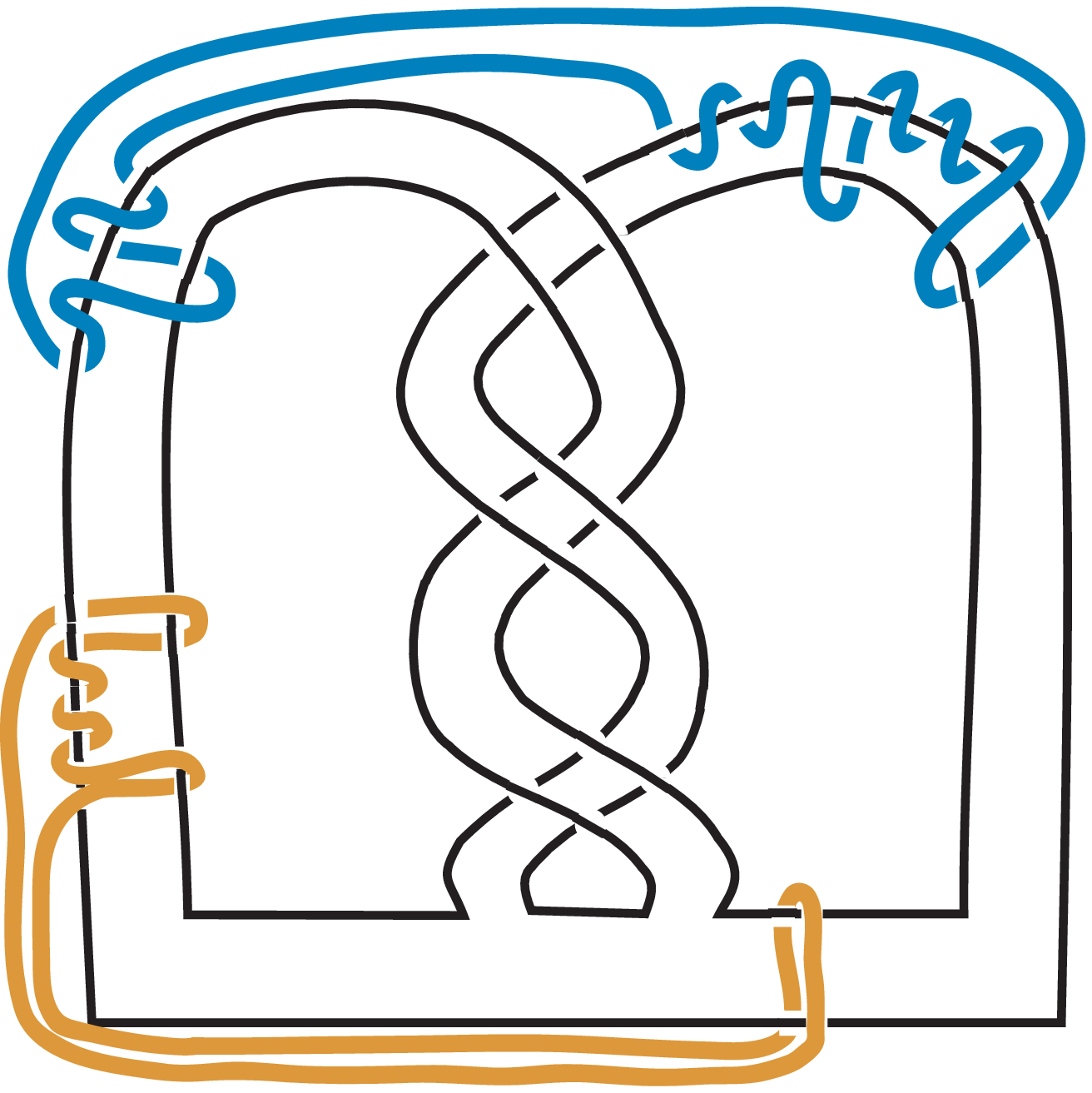}
	\put(-108,30){$\gamma_1$}
	\put(-108,80){$\gamma_2$}
	\caption{$\gamma_1 = (t+t^{-1})a+b$, $\gamma_2 = ta+(t^2+1)b$}\label{fig:52s}
\end{figure}

More generally, let $\gamma_i = x_i(t)a+y_i(t)b$ where $x_i(t), y_i(t) \in \mathbb{Z}[t,t^{-1}]$ for $i=1,2$.  Then by \ref{eqn:946calcs}, $\bl(\gamma_1,\gamma_1)=\bl(\gamma_2,\gamma_2)$ if and only if $(x_1y_1-x_2y_2)\bl(\eta,\eta)=0$, where for simplicity we set $x_i \equiv x_i(2^{-1})$ and $y_i \equiv y_i(2) \in \mathcal{Q} \subset \mathbb{Q}$. This is zero in $\mathbb{Q}(t)/\mathcal{Q}[t,t^{-1}]$ when $x_1y_1 = x_2y_2$ in $\mathcal{Q}$.  For every distinct value $c_i \in \mathbb{Z}[1/2]$, we can find an infecting curve $\gamma_i \subset S^3-\rib$ such that $\bl^\mathbb{Q}(\gamma_i,\gamma_i) = c_i\bl^\mathbb{Q}(\eta,\eta)$.  If $c_i=\hat{c_i}2^{-k_i}$ for $\hat{c_i}, k_i\in \mathbb{Z}$,  $\gamma_i$ may be given by $\gamma_i=t^{k_i}a+\hat{c_i}b$.  Thus, each $c_i$ yields a distinct concordance class $K_i\equiv \rib(\gamma_i;J)$.  We summarize these results in the following lemma and also in the graph of Figure \ref{fig:graaaph}.

\begin{lemma}
Let $\rib$ be the $9_{46}$ knot and $J$ the knot given in Corollary \ref{cor:distinct}.  For every $c_i \in \mathbb{Z}[1/2]$, we obtain an unknotted curve $\eta_i \subset S^3-J$ such that $lk(\eta_i,\rib)=0$.  By infection, the $\{\eta_i\}$ yield infinitely many distinct concordance classes of knots $K_i\equiv \rib(\eta_i,J)$.
\end{lemma}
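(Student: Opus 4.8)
The plan is to package the Blanchfield-form computation carried out in (\ref{eqn:946calcs}) with Corollary \ref{cor:distinct}. First I would recall from (\ref{eqn:946calcs}) that for any $\gamma = x(t)a + y(t)b \in \alex{\rib}$ the self-linking is $\bl_\rib(\gamma,\gamma) = x(2^{-1})\,y(2)\,\bl_\rib(\eta,\eta)$, and that $\bl_\rib(\eta,\eta) = 3(t-1)^2/\Delta_\rib(t)$ is a reduced fraction whose numerator is coprime to $\Delta_\rib(t)$. Then, given $c \in \Z[1/2]$, write $c = \hat{c}\,2^{-k}$ with $\hat{c},k \in \Z$ and set $\gamma_c \equiv t^{k}a + \hat{c}\,b$, so that $x(2^{-1}) = 2^{-k}$ and $y(2) = \hat{c}$; substituting into the displayed formula gives $\bl_\rib(\gamma_c,\gamma_c) = 2^{-k}\hat{c}\,\bl_\rib(\eta,\eta) = c\,\bl_\rib(\eta,\eta)$, exactly as asserted in the paragraph preceding the statement.

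Next I would verify that distinct parameters produce distinct self-linkings, which is what makes the infection classes distinct. If $\bl_\rib(\gamma_c,\gamma_c) = \bl_\rib(\gamma_{c'},\gamma_{c'})$, then $(c-c')\,\bl_\rib(\eta,\eta) = 0$ in $\Q(t)/\mathcal{Q}[t,t^{-1}]$, i.e. $\Delta_\rib(t)$ divides $3(c-c')(t-1)^2$ in $\mathcal{Q}[t,t^{-1}]$ with $\mathcal{Q} = \Z[1/2]$. Since $\Delta_\rib(t) = -(2t-1)(t-2)$ and $2$ is a unit in $\Z[1/2]$, this factors as a unit times the product of the two distinct linear primes $(t-\tfrac{1}{2})$ and $(t-2)$, each of which is coprime to $(t-1)^2$ and cannot divide any nonzero constant. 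Hence the divisibility forces $c - c' = 0$, so $c \mapsto \bl_\rib(\gamma_c,\gamma_c)$ is injective on $\Z[1/2]$; in particular $\bl_\rib(\gamma_c,\gamma_c) \neq 0$ for $c \neq 0$. This step is pure bookkeeping, but it is where the precise coprimality $(3(t-1)^2,\Delta_\rib)=1$ and the absence of repeated or constant factors of $\Delta_\rib$ over $\Z[1/2]$ are genuinely used.

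Finally I would realize each algebraic element $\gamma_c$ by an honest infecting curve and invoke the corollary: the translate $t^{k}a$ is an unknotted, zero-linking meridional push-off of $a$, the element $\hat{c}\,b$ is realized by the $(\hat{c},1)$-cable of $b$ as in the proof of Corollary \ref{cor:intense}, and their sum by a band sum of these curves in $S^3-\rib$, all of which preserve unknottedness in $S^3$, while additivity of linking number gives $lk(\eta_c,\rib)=0$. Writing $\eta_c$ for the resulting curve and $K_c \equiv \rib(\eta_c,J)$, the knot $J = R(\beta,J_0)$ is precisely the one in the hypotheses of Corollary \ref{cor:distinct} (whose root condition on $\Delta_R$ is checked inside that corollary's proof), so for any two distinct $c,c' \in \Z[1/2]$ the inequality $\bl_\rib(\eta_c,\eta_c) \neq \bl_\rib(\eta_{c'},\eta_{c'})$ established above yields that $K_c$ and $K_{c'}$ are not concordant. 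Therefore $\{K_c\}_{c \in \Z[1/2]}$ represents infinitely many pairwise distinct concordance classes. The only step carrying genuine geometric content is this realization of arbitrary module elements by unknotted zero-linking curves; I expect it to be routine given the cabling and band-sum constructions already used for Corollary \ref{cor:intense}, so the lemma is in essence a repackaging of (\ref{eqn:946calcs}) and Corollary \ref{cor:distinct}.
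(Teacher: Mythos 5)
Your proposal is correct and follows essentially the same route as the paper: the paper likewise combines the computation (\ref{eqn:946calcs}) and the value $\bl_\rib(\eta,\eta)=3(t-1)^2/\Delta_\rib(t)$ with the choice $\gamma_i = t^{k_i}a+\hat{c_i}b$ realizing $c_i\,\bl_\rib(\eta,\eta)$ for $c_i=\hat{c_i}2^{-k_i}$, and then invokes Corollary \ref{cor:distinct}. The only difference is cosmetic: you spell out the coprimality/injectivity bookkeeping and the geometric realization of the curves slightly more explicitly than the paper does.
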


\begin{figure}
\includegraphics[scale=.75]{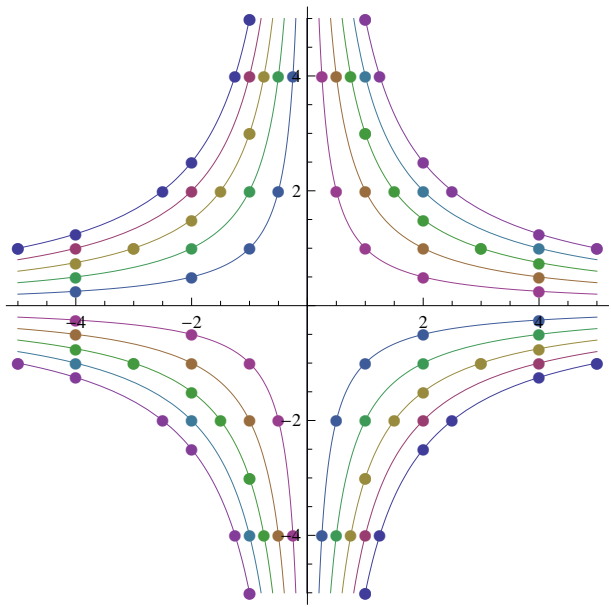}
	\caption{Each level curve in this graph is given by $xy=c\in\mathbb{Z}[1/2]$.  Actual $\gamma = xa+yb$ in $S^3-\rib$ with $\bl(\gamma,\gamma)=c$ are represented by shaded points $(x,y)$ on the level curves.  Choices of $\gamma_i$ lying on different level curves lead to nonconcordant knots $\rib(\gamma_i;J)$}
\label{fig:graaaph}
\end{figure}

Nonetheless, there are many combinations of $x_1, y_1, x_2, y_2 \in \mathbb{Z}[1/2]$ for which $x_1y_1 = x_2y_2$.  For instance, take $\gamma_1 = (t+t^{-1})a+b, \gamma_2 = ta+(t^2+1)b$ as in Figure \ref{fig:52s}. Although these curves are not isotopic in $S^3-\rib$, $x_1y_1=x_2y_2=15/4$ implying that $\gamma_1+\gamma_2'$ lies in an isotropic submodule of the rational Alexander module, $\mathcal{A}(\rcsum)$, and thus potentially in the kernel of the map
\[
	\mathcal{A}^{\mathbb{Z}}(\rcsum)\overset{\phi_*}\longrightarrow\mathcal{A}^{\mathbb{Z}}(V)
\]
for some potential $2.5$-solution, $V$, of $\csum$.  Infection upon $\eta_1$ and $\eta_2$ by $J$ may thus produce concordant knots as we saw in Example \ref{ex2}.

\section{When infecting curves have distinct orders in $\mathcal{A}(\rib)$}\label{sec:1solv}
In many cases, we do not need the full strength of Theorem \ref{thm:distinct} in order to find an obstruction to concordance between knots given by $\rib(\ea,J)$ and $\rib(\eb,J)$.  Many previous findings on the structure of the knot concordance group, and in particular the $n$-solvable filtration, have relied only on the order of the infecting curve $\eta_i$ as an element of the rational Alexander module \cite{CHL5,CHL6, CHL3}.  In this section, by building on these previous results, we show that when $\ea$ and $\eb$ have different orders as elements of $\mathcal{A}(\rib)$, distinct concordance classes are found more readily and with slightly weaker hypothesis.

Let $\rib$ again be any knot.  We will take $J_0$ to be some Arf invariant zero knot such that $|\rho_0(J_0)| > 2C_\rib$, where $C_\rib$ is the Cheeger-Gromov constant of $\rib$ and $L$ is any knot.  Let $\ea$ and $\eb$ be infecting curves whose orders as elements of $\alex{\rib}$ are $o_1(t)$ and $o_2(t)$, respectively.  Suppose $p(t)$ is a prime polynomial such that $p(t)$ divides $o_1(t)$ but $(o_2(t), p(t)) =(o_2(t),p(t^{-1}))=1$.  Then take $P$ to be the multiplicative subset of $\mathbb{Q}[t,t^{-1}]$ given by
\begin{equation*}
	P = \{ q_1(t)q_2(t)\cdots q_t(t) | (q_i,p) = (q_i,\ov{p}) = 1\},
\end{equation*}
and let $\Psi$ be the homomorphism $\Psi: \mathbb{Q}[t,t^{-1}] \rightarrow \mathbb{Q}[t,t^{-1}]P^{-1}$.  Then $\Psi$ induces the map (which, by abusing notation, we also label $\Psi$):
 \begin{equation*}
 	\Psi:\frac{{\mathbb{Q}(t)}}{{\mathbb{Q}[t,t^{-1}]}} \rightarrow \frac{{\mathbb{Q}(t)}}{\mathbb{Q}[t,t^{-1}]P^{-1}}.
\end{equation*}
In this case, we require $\Psi(\bl(\ea,\ea))\neq 0$, which yields the following theorem.

\begin{theorem}
Let $J_0$ and $\rib$ be knots such that $|\rho_0(J_0)| > 2C_\rib$. Let $\ea, \eb$ be infecting curves such that the orders of $[\ea],[\eb]$ in $\alex{\rib}$ are $o_1(t),o_2(t)$ respectively.  If there exists a prime $p(t)$ dividing $o_1(t)$  such that $(p,o_2)=(\ov{p},o_2)=1$ and $\Psi(\bl(\ea,\ea))\neq 0$, then given any knot $L$ which is $0$-solvable,  $K_1 = \rib(\ea,J)$ and $K_2 = \rib(\eb,L)$ are distinct in $\mathcal{C}$.
\end{theorem}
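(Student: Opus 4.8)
The plan is to reproduce the $\rho$-invariant obstruction of Theorem \ref{thm:distinct} one derived level lower, replacing the series $\mathcal{S}$ by a partial commutator series $\mathcal{P}$ that coincides with the rational derived series in degree one and is localized at $P$ in degree two. Since any slice knot is $1.5$-solvable, it is enough to prove that $\csum$ is not $1.5$-solvable; so I would assume for contradiction that it is, via a spin manifold $V$ with $\bdy V = M_{\csum}$. Because here $J = J_0$ infects $\rib$ directly along $\ea$ --- there is no intermediate ribbon knot $R$ --- the cobordism tower is one story shorter than in Theorem \ref{thm:distinct}: I would form $W = V \cup_{M_{\csum}} E \cup_{M_{K_1}} F_1 \cup_{\ov{M_{K_2}}} \ov{F_2}$ from the connected-sum cobordism $E$ and the infection cobordisms $F_1, F_2$, with $\bdy W = M_{J_0} \sqcup M_\rib \sqcup \ov{M_\rib} \sqcup \ov{M_L}$ and no cobordism $G$.

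Next I would set $\Phi : \pi_1(W) \to \pi_1(W)/\pi_1(W)^{(2)}_{\mathcal{P}} \equiv \Lambda$. The set $P \subset \Q[t,t^{-1}] = \Q[\pi_1(W)/\pi_1(W)^{(1)}_{\mathcal{P}}]$ is multiplicatively closed, and since it lives in a commutative group ring it is automatically a conjugation-invariant right divisor set, so $\Lambda$ is PTFA and $\mathcal{P}$ is admissible with $\pi_1(W)^{(2)} \subset \pi_1(W)^{(2)}_{\mathcal{P}}$. As $\mf{F}_{1.5} \subset \mf{F}_{1.5}^{\mathcal{P}}$, the solution $V$ is $(1.5,\mathcal{P})$-solvable and property (\ref{prop:rhosoln}) of Proposition \ref{prop:rho} gives $\sigma^{(2)}(V,\Phi) - \sigma(V) = 0$; the pieces $E, F_1, F_2$ carry all of their twisted second homology on the boundary \cite[Lemma 2.4]{CHL3}, so their signature defects vanish. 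Novikov additivity (property (\ref{prop:rhoadd})) then produces
\[
  0 = \rho(M_{J_0},\Phi) + \rho(M_\rib,\Phi) + \rho(\ov{M_\rib},\Phi) + \rho(\ov{M_L},\Phi).
\]

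The crux is the two local $\rho$-computations. For the $J_0$-term, the meridian of $J_0$ is isotopic in $W$ directly to $\ea \in \pi_1(M_\rib)^{(1)}$ (not to $\beta$, as in Lemma \ref{lemma:rhoJ}), so I only need the degree-one statement that $\ea$ is nontrivial in $A \equiv \pi_1(W)^{(1)}/\pi_1(W)^{(2)}_{\mathcal{P}}$, which is the $P$-localized rational Alexander module of $W$. Following Lemma \ref{lemma:nontrivial}, the kernel of $\mathcal{A}(\rcsum) \to \mathcal{A}(V)$, localized at $P$, is isotropic for the $P$-localized Blanchfield form $\Psi(\bl)$; the connected-sum splitting of the form together with the hypothesis $\Psi(\bl_\rib(\ea,\ea)) \neq 0$ keeps $\ea$ out of this kernel, and the downstream maps into the localized $\mathcal{A}(W)$ remain injective as before. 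Thus $\Phi|_{\pi_1(M_{J_0})}$ factors nontrivially through $\Z$, and properties (\ref{prop:rhofactors}), (\ref{prop:rhonaut}) give $\rho(M_{J_0},\Phi) = \rho_0(J_0)$. For the $L$-term, $\eb'$ is annihilated by its order $o_2(t)$, and since $(o_2,p) = (o_2,\ov{p}) = 1$ we have $o_2 \in P$, so $o_2$ becomes a unit and $\eb'$ dies in $A$. As $\pi_1(M_L)$ is normally generated by $\mu_L \simeq \eb'$, the restriction $\Phi|_{\pi_1(M_L)}$ is trivial and property (\ref{prop:rhotrivcomp}) gives $\rho(\ov{M_L},\Phi) = 0$. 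This is exactly the step that allows $L$ to be merely $0$-solvable: $\rho_0(L)$ never enters.

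Assembling these, $\rho_0(J_0) = -\rho(M_\rib,\Phi) - \rho(\ov{M_\rib},\Phi)$, whence $|\rho_0(J_0)| < 2C_\rib$ by the Cheeger--Gromov bound (property (\ref{prop:chgr})), contradicting $|\rho_0(J_0)| > 2C_\rib$; so $\csum$ is not $1.5$-solvable, not slice, and $K_1, K_2$ are distinct in $\mathcal{C}$. I expect the main obstacle to be the first local computation: verifying that the Cochran--Harvey--Leidy ``kernel is isotropic'' mechanism for the bordism $W$ passes to the $P$-localized Blanchfield form, so that $\Psi(\bl_\rib(\ea,\ea)) \neq 0$ really does keep $\ea$ alive in $A$ while the coprimality of $p$ with $o_2$ simultaneously kills $\eb'$. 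It is the single prime $p$ that must do the work of separating the two infecting curves.
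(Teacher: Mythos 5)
Your proposal is correct and follows essentially the same route as the paper: the same one-story-shorter cobordism tower (your $W$ is the paper's $Z$), the same partial commutator series $\mathcal{P}$ localized at $P$, the same two local computations ($o_2 \in P$ kills $\eb'$ so $\rho(\ov{M_L},\Phi)=0$, while $\Psi(\bl_\rib(\ea,\ea))\neq 0$ plus the isotropic-kernel mechanism keeps $\mu_0 \simeq \ea$ alive so $\rho(M_{J_0},\Phi)=\rho_0(J_0)$), and the same Cheeger--Gromov contradiction. The only cosmetic difference is that you obstruct $1.5$-solvability of $\csum$ where the paper works directly with a slice disk complement; both yield non-sliceness and hence distinctness in $\mathcal{C}$.
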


\begin{proof}
Construct $K_1$ and $K_2$ as indicated in the statement of the theorem.  Certainly, both $K_i$ are $1$-solvable.  Let $E_1$ and $E_2$ be the cobordisms given by the infections $K_1 \equiv \rib(\ea,J)$ and $K_2 \equiv \rib(\eb,L)$ respectively, and let $F$ be the cobordism given by the connected sum $K_1\#-K_2$.  As in the proof of Theorem \ref{thm:distinct}, we show by contradiction that $\csum$ is not slice.  If $\csum$ is slice, there exists a slice disk complement $V$ with boundary $\bdy V=M_{\csum}$.  Let $W$ be the manifold obtained by adjoining $V$ to $F$ along $M_{\csum}$ and similarly $Z$ is obtained by adjoining $W$ to $E_1$ and  $\ov{E_2}$ along $M_{K_1}$ and $\ov{M_{K_2}}$ respectively.  Then $\bdy Z = M_\rib \sqcup M_{J_0} \sqcup \ov{M_\rib} \sqcup \ov{M_L}$.

Take $\mathcal{P}$ to be a partial commutator series on the class of groups $G$ with $\beta_1 = 1$, given by
\begin{equation*}
\begin{array}{rl}
	G^{(0)}_\p&= G \\
	G^{(1)}_\p&= G^{(1)}_r \\
	G^{(2)}_\p&= \ker \{G^{(1)} \rightarrow \frac{G^{(1)}}{[G^{(1)},G^{(1)}]}\otimes_{\mathbb{Z}[t,t^{-1}]}\mathbb{Q}[t,t^{-1}]P^{-1}\}.
\end{array}
\end{equation*}
Let $\phi$ be the projection
\begin{equation*}
	\phi:\pi_1(Z)\rightarrow\frac{\pi_1(Z)}{\pi_1(Z)^{(2)}}\rightarrow\frac{\pi_1(Z)}{\pi_1(Z)^{(2)}_\p}
\end{equation*}
Again, we consider the von Neumann signature defect of $Z$ given by this coefficient system.
\begin{equation}\label{eqn:sigdeffinal}
\begin{array}{rl}
	0 =& \sigdef{Z}{\phi}  \\
	=& \rho(\bdy Z,\phi|_{\pi_1(\partial Z)}) \\
	=&\rho(M_\rib,\phi|_{\pi_1(M_\rib)}) + \rho(M_{J_0},\phi|_{\pi_1(M_{J_0})}) + \rho(\ov{M_\rib},\phi|_{\pi_1(\ov{M_\rib})})  + \rho(\ov{M_L},\phi|_{\pi_1(M_L)})
\end{array}
\end{equation}
We will show the restriction of $\phi$ to $\pi_1(M_{J_0})$ factors nontrivially through $\mathbb{Z}$, so $\rho(M_{J_0},\phi|_{\pi_1(M_{J_0})}) = \rho^0(J_0)$, and the restriction to $\pi_1(\ov{M_L})$ is trivial, so $\rho(\ov{M_L},\phi|_{\pi_1(M_L)})=0$.  This will yield the desired contradiction, similar to the proof of Theorem \ref{thm:distinct}

Since $\pi_1(\ov{M_L})$ is normally generated by its meridian which is isotopic in $Z$ to $\eb$, it suffices to show that $\eb$ is trivial in $\pi_1(Z)^{(1)}/\pi_1(Z)^{(2)}_\p$.  For any space $X$, we denote by $\mathcal{A}^\p(X)$ the localized Alexander module of $X$:
\begin{equation*}
	\mathcal{A}^\p(X) \equiv \alex{X}\otimes \mathbb{Q}[t,t^{-1}]P^{-1} \cong \frac{\pi_1(X)^{(1)}}{\pi_1(X)^{(2)} }\otimes_{\mathbb{Z}[t,t^{-1}]} \mathbb{Q}[t,t^{-1}]P^{-1}.
\end{equation*}
Consider the following diagram where $\phi_*, f_*, g_*, \phi'_*, f_*'$ and $g_*'$ are all induced by inclusion and the vertical maps by projection.  
\begin{equation*}
	\begin{diagram}\dgARROWLENGTH=1.0em
		\node{\mathcal{A}^{\mathbb{Z}}(\csum)} \arrow{s,l}{\psi} \arrow{e,t}{\phi_*}
		\node{\mathcal{A}^{\mathbb{Z}}(V)} \arrow{s,l}{} \arrow{e,t}{f_*}
		\node{\mathcal{A}^{\mathbb{Z}}{(W)}}\arrow{s,l}{} \arrow{e,t}{g_*}
		\node{\mathcal{A}^{\mathbb{Z}}(Z)} \arrow{s,l}{} \arrow{e,t}{}
		\node{\frac{\pi_1(Z)^{(1)}}{\pi_1(Z)^{(2)}_{\mathcal{P}}}} \arrow{sw,l}{i} \\
		\node{\mathcal{A}^\p(\csum)} \arrow{e,t}{\phi'_*}
		\node{\mathcal{A}^\p(V)} \arrow{e,t}{f'_*}
		\node{\mathcal{A}^\p(W)}\arrow{e,t}{g'_*}
		\node{\mathcal{A}^\p(Z)}
	\end{diagram}
\end{equation*}
By definition, $\pi_1(Z)_\p^{(2)}$ is the kernel of $\pi_1(Z)^{(1)}\rightarrow \mathcal{A}^{\p}(Z)$ and so $i$ is injective.  Under the map $\psi$, $\eta_2 \mapsto \eb\otimes 1$.  Since the order of $\eb$, $o_2(t)$, is relatively prime to both $p(t)$ and $p(t^{-1})$,  $o_2(t)\in P$.  Hence 
\begin{equation*}
	\eb\otimes 1 = \eb \cdot o_2(t) \otimes \frac{1}{o_2(t)} = 0,
\end{equation*}
and $\eb$ is trivial in ${\pi_1(Z)^{(1)}}/{\pi_1(Z)^{(2)}_{\mathcal{P}}}$ as desired.

Next consider $\pi_1(M_{J_0})$ which is normally generated by its meridian, $\mu_0$, isotopic in $Z$ to $\ea$.  The kernel of $\psi$ is the $P$-torsion submodule of $\alex{\csum}\cong \alex{K_1}\oplus\alex{K_2}$.  However, $\ea$ is $o_1(t)$-torsion, and $o_1(t) \notin P$ by definition.  Therefore, $\psi(\ea)$ is nontrivial.  Since we assumed $V$ to be slice disk complement for $\csum$, the kernel of $\phi_*'$ is an isotropic submodule of $\mathcal{A}^\p(\csum)$ with respect to the localized Blanchfield form $\bl^\p$ which is given by \cite[Theorem 4.7]{Lei1}
\begin{equation*}
 	\bl^\p(\psi(\ea),\psi(\ea)) = \Psi(\bl(\ea,\ea)).
\end{equation*}
Since this was assumed to be nonzero, $\ea$ must survive in $\mathcal{A}^\p(V)$.  The kernels of both $\pi_1(V)\rightarrow \pi_1(W)$ and $\pi_1(W)\rightarrow \pi_1(Z)$ are normally generated by longitudes of the infecting knots.  These lie in the second term of the derived series of $\pi_1(V)$ and $\pi_1(W)$ and therefore in $\pi_1(V)^{(2)}_\p$ and $\pi_1(W)^{(2)}_\p$.  Hence,
\begin{equation*}
	\mathcal{A}^\p(V)\cong\mathcal{A}^\p(W)\cong\mathcal{A}^\p(Z),
\end{equation*}
and $g_*'\circ f_*'$ is injective.  So $\mu_0$ is a nontrivial element of $\pi_1(Z)^{(1)}/\pi_1(Z)_\p^{(2)}$ and the map 
\begin{equation*}
	\phi: \pi_1(M_{J_0})\rightarrow \frac{\pi_1(Z)}{\pi_1(Z)_\p^{(2)}}
\end{equation*}
must factor through $\pi_1(M_{J_0})/\pi_1(M_{J_0})^{(1)} \cong \mathbb{Z}$.  Therefore, $\rho(M_{J_0},\phi) = \rho_0(J_0)$.  This completes the desired contradiction as Equation \ref{eqn:sigdeffinal} reduces to
\begin{equation*}
	\rho_0(J_0) = -\rho(M_\rib,\phi|_{\pi_1(M_\rib)}) -\rho(\ov{M_\rib},\phi|_{\pi_1(\ov{M_\rib})}) \leq 2 C_\rib.
\end{equation*}

\end{proof}

\bibliographystyle{plain}
\bibliography{diffcurve2}

\end{document}